\documentclass[11pt,reqno]{amsart}
\usepackage[margin=0.92in]{geometry}
\usepackage{amsmath,amssymb,amsthm,graphicx,amsxtra, setspace}
\usepackage{mathabx}
\usepackage[colorlinks,backref]{hyperref}
\usepackage{esint}
\usepackage{hyperref}
\usepackage[dvipsnames]{xcolor}
\usepackage{dsfont}
\usepackage[utf8]{inputenc}
\usepackage{mathrsfs}
\usepackage{upgreek}
\usepackage{mathtools}
\usepackage[makeroom]{cancel}
\allowdisplaybreaks

\usepackage[cyr]{aeguill}

\colorlet{darkblue}{blue!50!black}

\hypersetup{
	colorlinks,%
	citecolor=blue,%
	filecolor=red,%
	linkcolor=darkblue,%
	urlcolor=blue,%
	pdfnewwindow=true,%
	pdfstartview={FitH}
}

\colorlet{darkblue}{red!100!black}

\newtheorem{theorem}{Theorem}[section]

\newtheorem{lemma}[theorem]{Lemma}

\newtheorem{definition}[theorem]{Definition}

\newtheorem{remark}[theorem]{Remark}

\let\originalleft\left
\let\originalright\right
\renewcommand{\left}{\mathopen{}\mathclose\bgroup\originalleft}
\renewcommand{\right}{\aftergroup\egroup\originalright}

\def\e{\boldsymbol{e}}

\def\T{T\wedge\tau_N}

\def\T3{\mathbb{T}^3}

\def\diver{\mathrm{div}}

\def\curl{\mathrm{curl}}

\def\J{\mathrm{J}}

\def\y{\boldsymbol{y}}

\def\Y{\mathrm{Y}}

\def\X{\mathbb{X}}
\def\cR{\mathcal R}

\def\g{\mathbf{g}}

\def\h{\boldsymbol{h}}
\def\z{\boldsymbol{z} }
\def\v{\boldsymbol{v}}
\def\w{\boldsymbol{w}}
\def\n{\boldsymbol{n}}

\def\pfrak{\mathfrak{p}}

\def\Wb{\mathbb{W}}

\def\O{\Omega}

\def\Vb{\mathbb{V}}

\def\u{\boldsymbol{u}}
\def\un{\boldsymbol{u}_n}
\def\gn{\mathbf{g}_n}

\def\Uad{\mathcal{U}_{ad}}

\def\Ucal{\mathcal{U}}
\def\Gbf{\mathbf{G}}
\def\Hbf{\mathbf{H}}
\def\Kbf{\mathbf{K}}
\def\Wb{\mathbb{W}}
\def\Db{\mathbb{D}}
\def\wiWb{\widetilde{\mathbb{W}}}

\newcommand{\eps}{\varepsilon}

\newcommand{\R}{\mathbb{R}}

\begin{document}
\title[Boundary control for optimal mixing]{Boundary control for optimal mixing by two-dimensional second-grade fluids}
 

\thanks{$^*$Corresponding author\\$^1$Chair for Dynamics, Control, Machine Learning and Numerics, Department of Mathematics, Friedrich-Alexander-Universit\"at Erlangen-N\"urnberg, 91058 Erlangen, Germany.}

\subjclass[2020]{49J20, 49K20, 35Q35, 76A05, 76D55}
\keywords{Optimal mixing, boundary control, second-grade fluid, Navier-slip boundary condition, first-order optimality condition, uniqueness} 

 \author[K. Kinra]{Kush Kinra$^{1,*}$}

	\begin{abstract}
			We study optimal boundary mixing of a nondiffusive scalar transported by a
		two-dimensional incompressible second-grade fluid.  The control is the
		tangential traction in a Navier-slip boundary condition, and the principal
		objective is the terminal $(H^1(\Omega))'$ mix-norm, supplemented by a
		quadratic control cost and an optional enstrophy reward.  The second-grade
		constitutive law introduces a spatially filtered acceleration and a
		generalized vorticity; after lifting the nonhomogeneous boundary data, both
		$\g$ and $\partial_t\g$ enter the state equation, where $\g$ is the boundary control variable. We prove global state well-posedness for a passive scalar and well-posedness on a common,
		control-independent local interval for an active scalar.  We then establish
		existence of an optimal control, directional differentiability of the
		control-to-state map in the topology required by the terminal objective, and
		a weak backward adjoint formulation.  A duality identity yields the first-order variational inequality.  In the absence of the friction coefficient $\beta$ and enstrophy reward weight $\zeta$ (that is, $\beta=\zeta=0$), the additional adjoint regularity implies uniqueness of the optimal control for
		all sufficiently large control penalties $\gamma$.
	\end{abstract}

\maketitle


\section{Introduction}

Mixing by incompressible flows is a fundamental mechanism in many natural and
industrial processes.  When molecular diffusion is negligible on the time
scale under consideration, the evolution of a scalar concentration
$\theta$ is governed by the transport equation
\begin{equation}\label{eq:intro:transport}
	\partial_t\theta+\u\cdot\nabla\theta=0,
	\qquad \diver\u=0.
\end{equation}
If, in addition, the velocity satisfies the impermeability condition
$\u\cdot\n=0$ on the boundary, then all the $L^p$-norms of $\theta$ are
conserved.  These norms therefore cannot distinguish a poorly mixed scalar
from one that has been rearranged into increasingly fine scales.  The
underlying mechanism is the repeated stretching and folding of material sets,
which lies at the heart of chaotic advection
\cite{Aref1984,Ottino1989}.  Negative Sobolev norms overcome the limitation of
the conserved $L^p$-quantities by assigning greater weight to the large spatial
scales and have consequently become standard quantitative measures of mixing;
see, for example,
\cite{MathewMezicPetzold2005,LinThiffeaultDoering2011,Thiffeault2012}.  In a
bounded domain $\O$, we use the dual norm of $H^1(\O)$,
\begin{equation}\label{eq:intro:mix-norm}
	\|f\|_{(H^1(\O))'}^2
	:=\|\Lambda^{-1}f\|_{L^2(\O)}^2
	=(f,\Lambda^{-2}f),
\end{equation}
where $\Lambda^2=I-\Delta_N$ is the Neumann realization of $I-\Delta$.  Thus,
$\eta=\Lambda^{-2}f$ is the solution of
\begin{equation*}
	-\Delta\eta+\eta=f \quad\text{in }\O,
	\qquad \partial_{\n}\eta=0 \quad\text{on }\Gamma.
\end{equation*}
The decay of \eqref{eq:intro:mix-norm}, while the $L^2$-norm is conserved,
quantifies the transfer of scalar fluctuations from large to small spatial
scales.

The optimization of this transfer has been studied under a variety of
constraints on the stirring field.  Mathew, Mezi\'c, Grivopoulos, Vaidya, and
Petzold formulated optimal control problems for advective mixing in Stokes
flows using a negative Sobolev mix-norm \cite{MathewMezicEtAl2007}.  Lin,
Thiffeault, and Doering investigated instantaneous optimal stirring under
fixed energetic constraints \cite{LinThiffeaultDoering2011}, while Lunasin,
Lin, Novikov, Mazzucato, and Doering compared optimal stirring subject to fixed
energy, power, and palenstrophy \cite{LunasinEtAl2012}.  These works established
the effectiveness of mix-norm-based objectives, but they prescribe or optimize
the stirring field in the interior.  In many applications, by contrast, only
the boundary of the vessel is accessible to actuation.

A natural way to enhance this transfer is to actuate the fluid through the
boundary of the container.  Such a mechanism is motivated by wall-driven
stirring and has the advantage that the control does not act directly in the
interior of the fluid.  Boundary feedback for mixing in a two-dimensional
channel was considered in \cite{AamoKrsticBewley2003}.  For nondiffusive
scalars, Hu initiated a systematic Navier-slip boundary-control theory for
unsteady Stokes flows in \cite{HuStokes2018}.  Hu and Wu subsequently treated
two-dimensional Navier-Stokes flows, including both passive and active
scalars, and established existence of optimal controls and first-order
necessary conditions \cite{HuWuSICON2018}.  Because the zero-diffusivity
transport equation imposes strong regularity requirements on the
control-to-state map, vanishing-diffusivity approximations were developed for
Stokes flows in \cite{HuApproxStokes2020} and for Navier-Stokes flows in
\cite{HuWuJDE2019}.  More recently, Hu, Rautenberg, and Zheng constructed a
nonlinear feedback strategy for advective mixing under an energy constraint
\cite{HuRautenbergZheng2023}, while Zheng, Hu, and Wu developed and tested
numerical algorithms for dynamic boundary control in the unsteady Stokes
setting \cite{ZhengHuWu2023}.


Many fluids relevant to mixing, however, exhibit non-Newtonian effects that
are not represented by the Navier-Stokes equations.  Second-grade fluids form
a classical family of incompressible differential-type fluids and provide a
model for materials with a weak elastic response.  Their constitutive origins
go back to \cite{RivlinEricksen1955,DunnFosdick1974}.  Analytical results for
second-grade fluids with Navier-slip boundary conditions can be found in
\cite{BusuiocRatiu2003,BusuiocIftimie2006}; the spatial estimates associated
with the generalized vorticity are also central in the stochastic
well-posedness analysis of \cite{ChemetovCipriano2017}.

Boundary optimal mixing is by now well developed for Newtonian Stokes and
Navier--Stokes flows; see, in particular,
\cite{HuStokes2018,HuWuSICON2018,HuWuJDE2019}.  A separate literature studies
optimal control of second-grade fluids, primarily for distributed controls
and quadratic velocity-tracking objectives
\cite{AradaCipriano2015,AlmeidaChemetovCipriano2022}.  We are not aware of an
earlier analysis that combines tangential boundary-traction control,
second-grade-fluid dynamics, a nondiffusive transported scalar, and a
terminal negative-Sobolev mixing objective.

Some state estimates can be adapted from the existing second-grade-fluid
theory, and the choice of the mix-norm is motivated by the Newtonian mixing
literature.  The analytical framework needed here is nevertheless different.
The nonhomogeneous traction requires a high-order Stokes lifting; the filtered
acceleration makes both the control and its time derivative appear after the
lifting; the scalar equation supplies no parabolic smoothing; and the natural
adjoint regularity does not justify a formal strong tangential boundary
condition.  Resolving these points, proving terminal-state differentiability,
and obtaining a lifting-independent optimality condition constitute the main
new analytical contribution.

The purpose of this paper is to develop a boundary optimal-control theory for
mixing by a two-dimensional second-grade fluid. Let $T>0$, and let $\O\subset\R^2$ be a bounded, simply connected domain
	whose boundary $\Gamma$ is of class $C^{4,1}$.  Set
\begin{equation*}
	\Upsilon_\alpha(\u):=\u-\alpha\Delta\u,
	\qquad
	\Db(\u):=\frac12\bigl(\nabla\u+(\nabla\u)^{\mathsf T}\bigr).
\end{equation*}
and consider the coupled system
\begin{equation}\label{state:eq:state}
	\left\{
	\begin{aligned}
		\partial_t\theta+\u\cdot\nabla\theta&=0,
		&&\text{in }(0,T)\times\O,\\
		\partial_t\Upsilon_\alpha(\u)-\nu\Delta\u
		+\curl\Upsilon_\alpha(\u)\times\u+\nabla p
		&=\xi\theta e_2,
		&&\text{in }(0,T)\times\O,\\
		\diver\u&=0,
		&&\text{in }(0,T)\times\O,\\
		\theta(0)=\theta_0, \;\; \u(0)& =\u_0,
		&&\text{in }\O.
	\end{aligned}
	\right.
\end{equation}
 Here $\nu>0$ is the viscosity and $\alpha>0$ is the second-grade, or
viscoelastic, material parameter.  Through
$\Upsilon_\alpha(\u)=\u-\alpha\Delta\u$, the parameter $\alpha$ weights the
small-scale spatial contribution to the generalized momentum and vorticity.
In the present notation, $\alpha$ has the physical dimension of length
squared because it occurs in the Helmholtz-type operator
$I-\alpha\Delta$. Thus, setting $\ell_\alpha:=\sqrt{\alpha}$, the
quantity $\ell_\alpha$ represents the intrinsic length scale associated
with the second-grade correction; cf.~\cite{OliverShkoller2001}, where
the corresponding operator is written as $I-\ell_\alpha^2\Delta$.
Under a nondimensionalization based on a characteristic length $L_0$,
the dimensionless second-grade parameter is
$\alpha/L_0^2=(\ell_\alpha/L_0)^2$.  Formally,
$\alpha\to0$ recovers the Navier--Stokes momentum equation.  No monotone
dependence of the attainable mixing on $\alpha$ is asserted here; identifying
that dependence is a natural analytical and computational question.
The parameter $\xi\in\{0,1\}$ distinguishes the passive and active cases:
$\xi=0$ gives a passive scalar, whereas $\xi=1$ gives a Boussinesq-type active
scalar.  The boundary control
$\g$ enters through the nonhomogeneous Navier-slip conditions
\begin{equation}\label{state:eq:boundary}
	\u\cdot\n=0,
	\qquad
	\bigl[2\nu\n\cdot\Db(\u)+\beta\u\bigr]\cdot\tau
	=\g\cdot\tau
	\quad\text{on }(0,T)\times\Gamma,
\end{equation}
where $\beta\geq0$ is the friction coefficient, $\n$ is the outward unit normal and $\tau$ denotes the unit
tangent to $\Gamma$.  The control is therefore the tangential traction exerted
at the boundary, rather than a force distributed throughout the domain.

\begin{remark}
	In equation \eqref{state:eq:state}, the vector product $\times$ for 2D vectors $\y=(y_1,y_2)$ and $\z=(z_1,z_2)$ is calculated as $\y\times\z= (y_1,y_2,0)\times (z_1,z_2,0)$. The curl of vector $\y$ is equal to $\curl \y = \frac{\partial y_2}{\partial x_1}-\frac{\partial y_1}{\partial x_2}$ and the vector product of $\curl \y$ with the vector $\z$ is understood as 
	\begin{align*}
		\curl\y\times\z = (0,0,\curl\y)\times(z_1,z_2,0).
	\end{align*}
\end{remark}

For the time $T_\xi$ on which the state system is well-posed, we minimize
\begin{equation}\label{eq:intro:cost}
	\J(\g)
	:=\frac12\|\theta(T_\xi;\g)\|_{(H^1(\O))'}^2
	+\frac\gamma2\|\g\|_{\Ucal}^2
	-\frac\zeta2\int_0^{T_\xi}
	\|\curl\u(t;\g)\|_{L^2(\O)}^2\,dt,
\end{equation}
over the closed, bounded, and convex set $\Uad$.  Here $\gamma>0$ is the
control-penalty parameter, $\zeta\geq0$ weights the enstrophy reward, and
\begin{equation}\label{eq:intro:control-space}
	\Ucal
	:=\left\{
	\g\in L^2\bigl(0,T;\Vb^{5/2}(\Gamma)\bigr):
	\partial_t\g\in L^2\bigl(0,T;\Vb^{3/2}(\Gamma)\bigr)
	\right\}.
\end{equation}
The admissible controls additionally satisfy $\|\g\|_{\Ucal}\leq M$ and the
compatibility condition at $t=0$ determined by \eqref{state:eq:boundary} and
$\u_0$.  The terminal negative Sobolev norm is the principal mixing objective, whereas
	the enstrophy term is an optional reward for time-integrated rotational
	activity.  Enstrophy is only a proxy for stirring and does not by itself imply
	a smaller terminal mix-norm.  For every fixed $\zeta\geq0$, boundedness of
	$\Uad$ and the uniform state estimate keep $\J$ bounded from below, so no
	upper restriction on $\zeta$ is needed for existence.  
	

The passage from a Newtonian to a second-grade fluid is not a formal
replacement in the existing boundary-control theory.  Indeed, the term
$\partial_t\Upsilon_\alpha(\u)$ couples the time derivative to a second-order
spatial operator, while the nonlinearity involves the generalized vorticity
$\curl\Upsilon_\alpha(\u)$.  Moreover, after the nonhomogeneous boundary condition
is incorporated into the weak formulation, both $\g$ and
$\partial_t\g$ occur in the boundary forcing.  This explains the temporal
regularity in \eqref{eq:intro:control-space} and requires a sufficiently
regular Stokes lifting of the boundary data.  For the active scalar, there is
an additional two-way coupling: the scalar drives the velocity through the
buoyancy term, while the velocity transports the scalar.  At the level needed
for the control problem, the resulting high-order estimates close on a
control-independent local interval, whereas the passive problem is global in
time.

The nondiffusive transport equation creates a further difficulty in the
sensitivity analysis.  Although the velocity depends regularly on the
boundary traction, the scalar equation has no parabolic smoothing.  Strong
stability of the transported scalar must therefore be proved directly from
the flow stability.  This issue is particularly delicate in establishing the
directional derivative of the control-to-state map at the terminal time in the
topology used by the mix-norm.  Finally, the second-grade nonlinearities and
the nonhomogeneous boundary input make a strong adjoint boundary condition
unavailable at the natural energy level.  We consequently formulate the
velocity adjoint variationally and express the reduced derivative through a
lifting of the boundary variation.  A key point is to prove that the resulting
optimality condition does not depend on the particular lifting.

  Within the literature described above, we are not aware of an earlier analytical treatment of boundary optimal mixing by a second-grade fluid for
either passive or Boussinesq-type active nondiffusive scalars.  The main
contributions are as follows.
\begin{itemize}
	\item We establish existence and uniqueness for the state system
	\eqref{state:eq:state}-\eqref{state:eq:boundary}.  For $\xi=0$, the solution
	exists globally on $[0,T]$; for $\xi=1$, it exists on a time interval
	$[0,T_\xi]$ (see \eqref{eqn:time} below) that is uniform with respect to $\g\in\Uad$.  In particular,
	$\u\in L^\infty(0,T_\xi;H^3(\O))$, which supplies the spatial Lipschitz
	regularity required by the nondiffusive transport equation.
	
	\item Using uniform state estimates, compactness, and lower
	semicontinuity, we prove that the minimization problem associated with
	\eqref{eq:intro:cost} admits at least one optimal control.  The boundedness
	of $\Uad$ also permits the negative enstrophy term in the objective.
	
	\item We prove quantitative Lipschitz stability with respect to the boundary
	control, establish well-posedness of the linearized state system, and show
	that the control-to-state map is directionally differentiable.  In particular,
	the linearized scalar is identified as the derivative of the terminal state
	in $(H^1(\O))'$, which is the topology required to differentiate the mixing
	term.
	
	\item We construct a unique weak solution of the backward adjoint system and
	derive a duality identity between the linearized and adjoint variables.  This
	yields a variational inequality characterizing every optimal control.  The
	reduced boundary functional is expressed through a Stokes lifting, and we
	prove that its value, and hence the first-order necessary condition, is
	independent of the chosen lifting.
	
	\item In the case $\beta=\zeta=0$, we obtain the additional adjoint
	regularity needed to compare two solutions of the optimality system.  We then
	prove the uniqueness of the optimal control whenever
	$\gamma>0$ is sufficiently large.
\end{itemize}

Thus, the paper connects two previously separate directions: boundary control
of mixing for Newtonian flows and optimal control of second-grade fluids.  In
contrast with the former, the controlled velocity is governed by a
higher-order nonlinear constitutive law; in contrast with the latter, the
quantity to be optimized is not a velocity-tracking functional but the
large-scale content of a transported, nondiffusive scalar.  The analysis also
keeps the passive and active cases in a common framework through the parameter
$\xi$, while making explicit the different time horizons imposed by the
coupling.


The remainder of the paper is organized as follows.  In
Section~\ref{sec1}, we formulate the state and control problems and introduce
the functional framework.  More specifically, the control problem and the
auxiliary estimates for the nonhomogeneous Navier-slip lifting are presented
in Subsections~\ref{subsec:control-problem} and
\ref{subsec:auxiliary-results}, respectively, while the Faedo-Galerkin
construction of the state solution is developed in
Subsection~\ref{subsec:galerkin}.  The well-posedness of the state system is
established in Section~\ref{Sec4}, and the existence of an optimal control is
proved in Section~\ref{sec:existence-optimal-control}.  The linearized system
and the corresponding stability estimates are studied in
Sections~\ref{sec:linearized-state} and~\ref{sec:stability}, respectively,
whereas the directional differentiability of the control-to-state mapping is
proved in Section~\ref{sec:gateaux}.  In Section~\ref{sec:adjoint}, we
construct the adjoint system, and in Subsection~\ref{subsec:duality} we derive
the associated duality relation.  The first-order necessary optimality
condition is obtained in Section~\ref{sec:first-order}.  Finally,
Section~\ref{sec:uniqueness} is devoted to the additional adjoint regularity
and the uniqueness of the optimal control for $\beta=\zeta=0$ and sufficiently
large $\gamma$.

\section{Mathematical formulation}\setcounter{equation}{0}\label{sec1}
In this section, we introduce the function spaces required for our analysis. We also formulate the control problem and present several auxiliary results that will be used throughout the paper. Throughout the paper, $\O\subset\R^2$ is bounded and simply connected and $\Gamma=\partial\O$ is of class $C^{4,1}$. This regularity is used in the $H^4$ estimate for the boundary lifting and in the high-order elliptic estimates for the state and adjoint systems. The assumption of simple connectedness avoids the harmonic ambiguity that may occur in a stream-function description on multiply connected domains.


\subsection{Function spaces} 
Set
\begin{align*}
	\Vb^s (\O) & := \{ \u\in H^s(\O) \; : \; \diver\u=0,\; \u\cdot\n|_{\Gamma}=0 \}, \;  && s\geq0,\\
	\Vb^s (\Gamma) & := \{ \g\in H^s(\Gamma) \; : \;  \g\cdot\n|_{\Gamma}=0 \}, \;  && s\geq0,\\
	\Wb & := \{ \u \in \Vb^2(\O) \; : \;[2\nu \n \cdot \Db(\u)  + \beta \u]\cdot\tau|_{\Gamma} =0 \text{ with } \beta\geq0 \},\\
	\wiWb &  := \Wb \cap H^3(\O). 
\end{align*}
We denote $(\cdot,\cdot)$ as the inner product in $L^2(\O)$ and we write
$\mathbb P$ for the Leray projector onto $\Vb^0(\O)$. For any Banach space $\X$, the associate norm will be denoted by $\|\cdot\|_{\X}$.  

On the space $\Vb^1(\O)$, we define the following inner product:
\begin{align}\label{Vb1}
	(\u,\v)_{\Vb^1(\O)}   = (\u,\v) + 2\alpha(\Db(\u),\Db(\v)) + \frac{\alpha \beta}{\nu} \int_{\Gamma}(\u\cdot\tau)(\v\cdot\tau) d S, \text{ for } \u,\v\in \Vb^1(\O),
\end{align}
and the corresponding norm $\|\cdot\|_{\Vb^1(\O)}$.  We denote by $\mathcal{M}_{\alpha\beta}:\Vb^1(\O)\to(\Vb^1(\O))'$ the operator induced by
\eqref{Vb1}:
\begin{align}
	\langle\mathcal{M}_{\alpha\beta} \u_1 , \u_2 \rangle = (\u_1,\u_2)_{\Vb^1(\O)}, \text{ for } \u_1,\u_2\in \Vb^1(\O).
\end{align}
Moreover, for $\u\in\Wb$ and $\v\in \Vb^1(\O)$, we have
\begin{align}\label{eqn-IbP}
	(\u,\v)_{\Vb^1(\O)}  =  (\Upsilon_\alpha(\u) , \v).
\end{align}

Next, on the space $\wiWb$, we define the following inner product:
\begin{align}\label{wiWb:inner:product}
	(\u,\v)_{\wiWb}  = (\curl\Upsilon_\alpha(\u), \curl\Upsilon_\alpha(\v) ) + (\u,\v)_{\Vb^1(\O)},  \text{ for } \u,\v\in \wiWb,
\end{align}
and the corresponding norm $\|\cdot\|_{\wiWb}$.

In addition, we have the following estimates from \cite{ChemetovCipriano2017} which will be used in the sequel.

\begin{lemma}[{\cite[Lemma 3.3]{ChemetovCipriano2017}}]
	For each $\u\in \Wb$, we have 
	\begin{align*}
		\|\Upsilon_\alpha(\u) - \mathbb{P} \Upsilon_\alpha(\u)\|_{L^2(\O)} \leq \|\u\|_{H^1(\O)},\\
		\|\Upsilon_\alpha(\u) - \mathbb{P} \Upsilon_\alpha(\u)\|_{H^1(\O)} \leq \|\u\|_{H^2(\O)}.
	\end{align*}
\end{lemma}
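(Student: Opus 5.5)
The plan is to identify $\Upsilon_\alpha(\u)-\mathbb P\Upsilon_\alpha(\u)$ as a gradient and estimate it by duality. Since $\diver\u=0$ and $\u\cdot\n=0$, we have $\u=\mathbb P\u$, so
\begin{equation*}
\Upsilon_\alpha(\u)-\mathbb P\Upsilon_\alpha(\u)=-\alpha(I-\mathbb P)\Delta\u=-\alpha\nabla\pi .
\end{equation*}
By the Helmholtz decomposition, $\pi$ solves the Neumann problem
\begin{equation*}
\Delta\pi=\diver\Delta\u=0 \ \text{in }\O,\qquad \partial_{\n}\pi=\Delta\u\cdot\n \ \text{on }\Gamma .
\end{equation*}
In 2D with $\diver\u=0$ we have $\Delta\u=\nabla^\perp\curl\u$, hence $\Delta\u\cdot\n=\partial_\tau(\curl\u)$ up to sign. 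On $\Gamma$, the Navier-slip condition $\u\in\Wb$ gives the classical identity $\curl\u=(2\kappa-\beta/\nu)\,\u\cdot\tau$, with $\kappa$ the curvature. Thus the boundary datum is $\partial_\tau$ of a lower-order trace of $\u$; this gain of one derivative is the key point.

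For the $L^2$ bound, write $\|\nabla\pi\|_{L^2}=\sup\{(\nabla\pi,\nabla\phi):\|\nabla\phi\|_{L^2}\le1\}$. Since $\pi$ is harmonic,
\begin{equation*}
(\nabla\pi,\nabla\phi)=\int_\Gamma\partial_\tau(\curl\u)\,\phi\,dS=-\int_\Gamma(2\kappa-\beta/\nu)(\u\cdot\tau)\,\partial_\tau\phi\,dS .
\end{equation*}
Now $\u\cdot\tau\in H^{1/2}(\Gamma)$ with norm $\lesssim\|\u\|_{H^1}$, and $\partial_\tau\phi\in H^{-1/2}(\Gamma)$ with norm $\lesssim\|\nabla\phi\|_{L^2}$ (take $\phi$ harmonic). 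This gives $\alpha\|\nabla\pi\|_{L^2}\le C\|\u\|_{H^1}$. An alternative that avoids traces is to write $\alpha(\nabla\pi,\nabla\phi)=-\alpha(\Delta\u,\nabla\phi)$ and integrate by parts using the $\Db$ form: the term $2\alpha(\Db\u,\nabla^2\phi)$ is problematic, so I would rather use the curl form $(\nabla^\perp\curl\u,\nabla\phi)$. This reduces to a boundary term, and the boundary identity above closes the estimate.

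For the $H^1$ bound, I would use elliptic regularity for the Neumann problem: $\|\nabla\pi\|_{H^1}\le C\|\partial_\tau(\curl\u)\|_{H^{1/2}(\Gamma)}\le C\|(2\kappa-\beta/\nu)\u\cdot\tau\|_{H^{3/2}(\Gamma)}\le C\|\u\|_{H^2}$. Here $\Gamma\in C^{4,1}$ makes $\kappa$ a smooth multiplier, and the trace theorem gives $H^2(\O)\to H^{3/2}(\Gamma)$.

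I expect the main obstacle to be the constant: the statement has constant $1$ and no dependence on $\alpha$ or $\beta$. The argument above naturally produces a constant $C(\O,\alpha,\beta,\nu)$. Presumably the cited lemma either absorbs constants into its norm conventions or suppresses them. I would prove the bound with a generic constant and note that this is all that is used later. The boundary identity for $\curl\u$ in terms of the curvature is the other delicate step; I would verify it carefully in a local frame.
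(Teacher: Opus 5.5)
Your argument is correct, and it is the standard route to this estimate; the paper gives no proof of its own and only cites Chemetov--Cipriano. You write $\Upsilon_\alpha(\u)-\mathbb P\Upsilon_\alpha(\u)=-\alpha\nabla\pi$ with $\pi$ harmonic and Neumann datum $\pm\partial_\tau\bigl[(2\kappa-\beta/\nu)\,\u\cdot\tau\bigr]$, then use $H^{1/2}$--$H^{-1/2}$ duality for the $L^2$ bound and Neumann $H^2$-regularity for the $H^1$ bound. This is exactly what gains the one derivative.

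You are also right about the constant. The transcribed statement drops a constant $C=C(\O,\alpha,\beta,\nu)$. The left side is linear in $\alpha$ while $\Wb$ does not depend on $\alpha$, so the constant $1$ cannot hold in general, and a generic constant is all the later estimates need.
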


\begin{lemma}[{\cite[Lemma 3.4]{ChemetovCipriano2017}}]\label{lem:1.6}
	We have 
	\begin{align}
		\|\u\|_{H^2(\O)} & \leq  C ( \|\mathbb{P} \Upsilon_\alpha(\u)\|_{L^2(\O)} +  \|\u\|_{H^1(\O)}), && \text{ for all } \; \u\in  \Wb; \label{lem:1.6:1}\\
		\|\u\|_{H^3(\O)} & \leq  C ( \|\curl \Upsilon_\alpha(\u)\|_{L^2(\O)} +  \|\u\|_{H^1(\O)}), && \text{ for all } \; \u\in  \wiWb,\label{lem:1.6:2}
	\end{align}
	for some constant $C>0$.
\end{lemma}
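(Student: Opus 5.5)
We prove the two estimates of Lemma~\ref{lem:1.6} separately. In each case we use elliptic regularity for the Stokes-type problem with Navier-slip conditions, together with the previous lemma, which says that $\Upsilon_\alpha(\u)-\mathbb P\Upsilon_\alpha(\u)$ is a gradient controlled by lower-order norms of $\u$.

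\emph{Proof of \eqref{lem:1.6:1}.} Fix $\u\in\Wb$ and set $\boldsymbol f:=\Upsilon_\alpha(\u)$. Write $\boldsymbol f-\mathbb P\boldsymbol f=\nabla\pi$ for some scalar $\pi$. Then $\u$ solves the Stokes-type system
\begin{equation*}
\u-\alpha\Delta\u+\nabla(-\pi)=\mathbb P\boldsymbol f,\quad \diver\u=0\ \text{in }\O,\qquad \u\cdot\n=0,\ \bigl[2\nu\n\cdot\Db(\u)+\beta\u\bigr]\cdot\tau=0\ \text{on }\Gamma.
\end{equation*}
Since $\Gamma$ is of class $C^{4,1}$, the $H^2$ regularity theory for the Stokes problem with Navier-slip conditions (Agmon--Douglis--Nirenberg, or Solonnikov--Scadilov) gives
\begin{equation*}
\|\u\|_{H^2(\O)}\le C\bigl(\|\mathbb P\boldsymbol f\|_{L^2(\O)}+\|\u\|_{H^1(\O)}\bigr).
\end{equation*}
The lower-order term $\|\u\|_{H^1(\O)}$ absorbs the zeroth-order term and the boundary friction term $\beta\u$. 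The gradient part never enters as data: it is only the pressure, which is eliminated, and its size is anyway bounded by $\|\u\|_{H^1(\O)}$ by the previous lemma.

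\emph{Proof of \eqref{lem:1.6:2}.} We argue via the generalized vorticity. Fix $\u\in\wiWb$ and set $w:=\curl\Upsilon_\alpha(\u)=\curl\u-\alpha\Delta\curl\u$. Because curl annihilates gradients, $w=\curl\mathbb P\Upsilon_\alpha(\u)$. On the boundary, the Navier-slip condition combined with $\u\cdot\n=0$ determines the trace of $\curl\u$: a direct computation gives $\curl\u=(2\kappa-\beta/\nu)\,\u\cdot\tau$ on $\Gamma$, where $\kappa$ is the curvature. This is the standard identity used by Busuioc--Ratiu and Clopeau--Mikeli\'c--Robert.

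Set $\omega:=\curl\u$. Then $\omega$ solves the Dirichlet problem
\begin{equation*}
\omega-\alpha\Delta\omega=w\ \text{in }\O,\qquad \omega=(2\kappa-\beta/\nu)\,\u\cdot\tau\ \text{on }\Gamma.
\end{equation*}
Elliptic regularity gives
\begin{equation*}
\|\omega\|_{H^2(\O)}\le C\bigl(\|w\|_{L^2(\O)}+\|(2\kappa-\beta/\nu)\,\u\cdot\tau\|_{H^{3/2}(\Gamma)}\bigr)\le C\bigl(\|w\|_{L^2(\O)}+\|\u\|_{H^2(\O)}\bigr).
\end{equation*}
Here we use the trace theorem and $\kappa\in C^{2,1}$.

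Next recover $\u$ from $\omega$ through the stream function $\psi$, which exists globally because $\O$ is simply connected: $\u=\nabla^\perp\psi$, $\Delta\psi=\omega$ in $\O$, $\psi=0$ on $\Gamma$. This yields $\|\u\|_{H^3(\O)}\le C\|\psi\|_{H^4(\O)}\le C\|\omega\|_{H^2(\O)}$, so
\begin{equation*}
\|\u\|_{H^3(\O)}\le C\bigl(\|w\|_{L^2(\O)}+\|\u\|_{H^2(\O)}\bigr).
\end{equation*}

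It remains to replace $\|\u\|_{H^2(\O)}$ by $\|\u\|_{H^1(\O)}$. Interpolation, $\|\u\|_{H^2}\le\varepsilon\|\u\|_{H^3}+C_\varepsilon\|\u\|_{H^1}$, followed by absorption of the $\varepsilon$-term into the left-hand side, gives \eqref{lem:1.6:2}.

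\emph{Expected main obstacle.} The delicate step is the boundary identity for $\curl\u$ and the accompanying trace estimate. It requires differentiating the Navier-slip condition along $\Gamma$, and this is where the $C^{4,1}$ regularity (curvature of class $C^{2,1}$) is used. A second point to check is the justification of the regularity theory for the Navier-slip Stokes operator in \eqref{lem:1.6:1}. Alternatively, \eqref{lem:1.6:1} can also be obtained from the vorticity approach: take the curl of $\mathbb P\Upsilon_\alpha(\u)$ in $H^{-1}$ and repeat the same argument one derivative lower.
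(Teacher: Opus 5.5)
Your argument is correct. The paper gives no proof of its own here; the lemma is imported from Chemetov--Cipriano. The useful comparison is therefore with tools the paper uses elsewhere.

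For \eqref{lem:1.6:1} you appeal to $H^2$ regularity of the Navier-slip Stokes problem. This is the same result the paper quotes in Step~1 of Lemma~\ref{Lifting:estimates}. Incidentally, the term $\|\u\|_{H^1(\O)}$ is redundant there: \eqref{eqn-IbP} with $\v=\u$ gives $\|\u\|_{\Vb^1(\O)}^2=(\mathbb P\Upsilon_\alpha(\u),\u)$, hence $\|\u\|_{\Vb^1(\O)}\le\|\mathbb P\Upsilon_\alpha(\u)\|_{L^2(\O)}$.

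For \eqref{lem:1.6:2} your chain is:
\begin{enumerate}
\item Lemma~\ref{vorticit-boundary} with $\g=0$;
\item $H^2$ regularity for the Dirichlet problem for $I-\alpha\Delta$ solved by $\curl\u$;
\item recovery of $\u$ from its vorticity;
\item interpolation plus absorption, which is legitimate since $\u\in H^3(\O)$ a priori.
\end{enumerate}
This is the mechanism of Steps~3--4 of Lemma~\ref{Lifting:estimates}, with a stream function in place of the div--curl estimate. It needs only scalar elliptic theory. It also bypasses the preceding lemma on $\Upsilon_\alpha(\u)-\mathbb P\Upsilon_\alpha(\u)$. That lemma's $H^1$ version is what an alternative Stokes-based route would use: div--curl for $\mathbb P\Upsilon_\alpha(\u)$, then $H^3$ Stokes regularity. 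The price of your route is that the global stream function uses simple connectedness, which the paper assumes anyway.

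Two small corrections to your closing remark:
\begin{itemize}
\item The boundary identity for $\curl\u$ comes from differentiating $\u\cdot\n=0$ along $\Gamma$ and then inserting the Navier condition algebraically. It does not come from differentiating the Navier condition itself.
\item The boundary regularity you actually use is $\Gamma\in C^{3,1}$, needed for the $H^4$ estimate of $\psi$. The curvature only has to be a multiplier on $H^{3/2}(\Gamma)$, so $C^{4,1}$ is more than enough.
\end{itemize}
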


\begin{lemma}[{\cite[Lemma 3.5]{ChemetovCipriano2017}}]\label{lem:curl:estimates}
	We have 
	\begin{equation}\label{curl:estimates}
		\left\{       
		\begin{aligned}
			|(\curl \Upsilon_\alpha(\u) \times \v, \w)| & \leq  C \|\u\|_{H^3(\O)} \|\v\|_{L^2(\O)} \|\w\|_{H^2(\O)}, 
			    \text{ for all } \; \u\in  H^3(\O), \v\in L^2(\O), \w \in H^2(\O);\\
			|(\curl \Upsilon_\alpha(\u) \times \v, \w)| & \leq  C \|\u\|_{H^1(\O)} \|\v\|_{H^3(\O)} \|\w\|_{H^3(\O)},   \text{ for all } \; \u\in  \wiWb, \v, \w \in H^3(\O);  \\
			|(\curl \Upsilon_\alpha(\u) \times \v, \u)| & \leq  C \|\u\|_{H^1(\O)}^2 \|\v\|_{H^3(\O)},    \qquad\qquad \text{ for all } \; \u\in  \wiWb, \v \in H^3(\O);
		\end{aligned}
		\right.
	\end{equation}
	for some constant $C>0$.
\end{lemma}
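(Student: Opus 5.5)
The three estimates in \eqref{curl:estimates} are all trilinear bounds for
\[
b(\u,\v,\w):=(\curl\Upsilon_\alpha(\u)\times\v,\w),
\]
and I will prove each by placing derivatives appropriately and then using Hölder and two-dimensional Sobolev embeddings. In 2D we have $\curl\y\times\z = \curl\y\,(-z_2,z_1)$, so
\[
b(\u,\v,\w)=\int_\O \curl\Upsilon_\alpha(\u)\,(v_1w_2-v_2w_1)\,dx .
\]

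\textbf{First estimate.} I will simply use $\curl\Upsilon_\alpha(\u)\in L^2$ with norm at most $C\|\u\|_{H^3}$. I put $\v\in L^2$ and use $\w\in H^2\hookrightarrow L^\infty$ to get
\[
|b|\le \|\curl\Upsilon_\alpha(\u)\|_{L^2}\|\v\|_{L^2}\|\w\|_{L^\infty}\le C\|\u\|_{H^3}\|\v\|_{L^2}\|\w\|_{H^2}.
\]

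\textbf{Second estimate.} Only $\|\u\|_{H^1}$ may appear, so derivatives must be moved off $\u$.
\begin{itemize}
\item I write $\curl\Upsilon_\alpha(\u)=\curl\u-\alpha\Delta\curl\u$.
\item The term $\int\curl\u\,(v_1w_2-v_2w_1)$ is bounded by $\|\u\|_{H^1}\|\v\|_{L^4}\|\w\|_{L^4}$.
\item For the term $\int\Delta\curl\u\,\phi$ with $\phi:=v_1w_2-v_2w_1\in H^3$, I integrate by parts twice to reach $\int\curl\u\,\Delta\phi$. This costs the boundary terms $\int_\Gamma(\partial_n\curl\u\,\phi-\curl\u\,\partial_n\phi)\,dS$.
\item The interior term is bounded by $\|\u\|_{H^1}\|\Delta\phi\|_{L^2}\le C\|\u\|_{H^1}\|\v\|_{H^3}\|\w\|_{H^3}$, since $H^2$ is an algebra in 2D.
\item For the boundary terms I use $\u\in\wiWb$. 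The Navier-slip condition with $\u\cdot\n=0$ gives on $\Gamma$ the identity $\curl\u=(2\kappa-\beta/\nu)\,\u\cdot\tau$, where $\kappa$ is the curvature (the standard identity used by Busuioc--Iftimie and Chemetov--Cipriano). This lets me express $\curl\u|_\Gamma$ through $\u$ itself. The $\curl\u\,\partial_n\phi$ boundary term is then bounded by $\|\u\|_{L^2(\Gamma)}\|\partial_n\phi\|_{L^2(\Gamma)}\le C\|\u\|_{H^1}\|\v\|_{H^3}\|\w\|_{H^3}$, using the trace theorem.
\end{itemize}

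\textbf{Main obstacle.} The term involving $\partial_n\curl\u$ on $\Gamma$ is the hardest, because boundary data gives no direct control of the normal derivative of the vorticity. I would avoid it by integrating by parts only once, in the form
\[
-\alpha\int\Delta\curl\u\,\phi=\alpha\int\nabla\curl\u\cdot\nabla\phi-\alpha\int_\Gamma\partial_n\curl\u\,\phi\,dS.
\]
I then rewrite $\nabla\curl\u=\Delta\u^\perp$ (valid since $\diver\u=0$) and integrate by parts again in the form $\int\nabla\u:\nabla(\cdot)$ plus boundary terms. Those boundary terms involve only first derivatives of $\u$, and tangential derivatives along $\Gamma$ can be moved onto $\phi$. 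The normal-derivative pieces are replaced using the slip condition $\n\cdot\Db(\u)\cdot\tau=-\frac{\beta}{2\nu}\u\cdot\tau$ on $\Gamma$. Each resulting boundary term is then of the form $\int_\Gamma \u\cdot(\text{derivatives of }\phi\text{ up to order }2)\,dS$. Since $\phi\in H^3$, its second derivatives have traces in $L^2(\Gamma)$, so these terms are controlled by $\|\u\|_{H^1}\|\v\|_{H^3}\|\w\|_{H^3}$.

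\textbf{Third estimate.} I apply the same computation with $\w=\u$ and $\phi=v_1u_2-v_2u_1$. The required cancellation appears at first order: the highest-order terms in $\u$ form a total derivative or an antisymmetric pairing, and these vanish. For example, $\int\curl\u\,\Delta(v\wedge u)$ produces $\int\curl\u\,\Delta u\wedge v$. Writing $\Delta\u=\nabla^\perp\curl\u$ (again by $\diver\u=0$), this becomes $\int\curl\u\,\nabla\curl\u\cdot(\cdot)\,\v=\frac12\int\nabla|\curl\u|^2\cdot(\cdot)\,\v$. I integrate this by parts onto $\v$, and the resulting boundary term is handled by the curvature identity above. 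All remaining terms are $\lesssim\|\u\|_{H^1}^2\|\v\|_{H^3}$, using $\v\in H^3\hookrightarrow W^{2,\infty}$ only in the weaker form $W^{1,\infty}$ plus the $H^2$ algebra property.
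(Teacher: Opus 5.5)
Your first estimate is fine. The second and third have a genuine gap, exactly at the term you call the main obstacle.

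\textbf{The gap.} Integrating by parts only once does not remove it: your displayed identity still contains $-\alpha\int_\Gamma\partial_n(\curl\u)\,\phi\,dS$. Everything you do afterwards (rewriting $\nabla\curl\u$ through $\Delta\u$, integrating by parts again, using the slip condition) acts only on the interior term $\alpha\int_\O\nabla\curl\u\cdot\nabla\phi\,dx$. Those steps produce first-order traces of $\u$, which cannot cancel a second-order trace. So the claim that every boundary term reduces to $\int_\Gamma\u\cdot(\cdots)\,dS$ is wrong.

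It also cannot be repaired, because for arbitrary $\v,\w\in H^3(\O)$ the second estimate is false. Let $\O$ be the unit disk. Take $h\in C_c^\infty([0,1))$ with $h(s)=s^2/2$ near $s=0$, set $f_\delta(r)=\delta^2h((1-r)/\delta)$, and define $\u_\delta(x)=f_\delta(|x|)\,x^\perp/|x|$ with $x^\perp=(-x_2,x_1)$. Then:
\begin{itemize}
\item $\u_\delta$ is smooth and divergence free, and both $\u_\delta$ and $\nabla\u_\delta$ vanish on $\Gamma$, so $\u_\delta\in\wiWb$ for every $\beta\geq0$;
\item $\|\u_\delta\|_{H^1(\O)}=O(\delta^{3/2})$;
\item $\partial_n\curl\u_\delta=f_\delta''(1)=1$ on $\Gamma$.
\end{itemize}
For $\v=(1,0)$, $\w=(0,1)$ we have $\phi\equiv1$, and
\[
(\curl\Upsilon_\alpha(\u_\delta)\times\v,\w)=\int_\O\bigl(\curl\u_\delta-\alpha\Delta\curl\u_\delta\bigr)\,dx=\int_\Gamma\u_\delta\cdot\tau\,dS-\alpha\int_\Gamma\partial_n\curl\u_\delta\,dS=-2\pi\alpha .
\]
This is not $O(\delta^{3/2})$. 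The third estimate fails the same way when $\v\cdot\n\neq0$, because then $\phi|_\Gamma=(\v\cdot\n)(\u\cdot\tau)\neq0$. For $\beta=0$, the field $x^\perp+\delta^{-3/2}\u_\delta$ lies in $\wiWb$ with bounded $H^1$ norm. With $\v=x$, a radial computation gives a left-hand side of $-2\pi\alpha\delta^{-3/2}+O(1)$.

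\textbf{The fix.} What is missing is a tangency hypothesis: $\v\cdot\n=\w\cdot\n=0$ on $\Gamma$ in the second estimate, and $\v\cdot\n=0$ in the third. The paper gives no proof of this lemma, only a citation. In every place it applies the third estimate, the field in the $\v$-slot ($\Gbf$, $\u$, $\hat\u$, \dots) is tangent, so this is the version actually needed. With $\tau=\n^\perp$, on $\Gamma$ one has
\[
\phi=v_1w_2-v_2w_1=(\v\cdot\n)(\w\cdot\tau)-(\v\cdot\tau)(\w\cdot\n).
\]
Tangency therefore gives $\phi|_\Gamma=0$, so the $\partial_n\curl\u$ term simply disappears. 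Your original twice-integrated computation then closes:
\begin{itemize}
\item $\int_\Gamma\curl\u\,\partial_n\phi\,dS$ is handled by $\curl\u|_\Gamma=(2\kappa-\beta/\nu)\,\u\cdot\tau$.
\item In the third estimate, $\partial_n\phi$ hides a first-order trace of $\u$. For tangent $\v$ it only involves $(\v\cdot\tau)\,\n\cdot\partial_n\u$, and $\n\cdot\partial_n\u=-\partial_\tau(\u\cdot\tau)$ because $\diver\u=0$ and $\u\cdot\n=0$. Integrating by parts along $\Gamma$ then bounds it by $C\|\v\|_{W^{1,\infty}}\|\u\|_{L^2(\Gamma)}^2$.
\item Your interior cancellation $\int_\O\curl\u\,(\v\cdot\nabla\curl\u)\,dx=-\frac12\int_\O\diver\v\,|\curl\u|^2\,dx$ is correct, since its boundary term vanishes for tangent $\v$.
\end{itemize}
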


\begin{remark}
	Note that,
	\begin{itemize}
		\item     due to Korn inequality, the norms $\|\cdot\|_{H^1(\O)}$ and $\|\cdot\|_{\Vb^1(\O)}$ are equivalent, that is,
		\begin{align}\label{Korn:equivalent}
			\|\u\|_{H^1(\O)} \leq C (\|\Db(\u)\|_{L^2(\O)}^2 + \|\u\|_{L^2(\O)}^2 ), \;\;\; \text{ for all } \; \u \in \Vb^1(\O).
		\end{align}
		\item     the norms $\|\cdot\|_{H^3(\O)}$ and $\|\cdot\|_{\wiWb}$ are equivalent, see Lemma \ref{lem:1.6}.
	\end{itemize}
\end{remark}

Let us now define a trilinear functional $b:\Vb^1(\O) \times\Vb^1(\O) \times\Vb^1(\O) \to \R$ by 
\begin{align*}
	b(\u,\v,\w) = \int_{\O}[(\u(x)\cdot\nabla)\v(x)] \cdot \w(x) dx.
\end{align*}
With this notation, we have the following known equalities (see \cite{ChemetovCipriano2017}):
\begin{equation}\label{trilinear1}
	\left\{ \begin{aligned}
		b(\u,\v,\w) & = - b(\u,\w,\v), \qquad\qquad\qquad\qquad  \text{ for all } \; \u,\v,\w \in\Vb^1(\O), \\
		(\curl \Upsilon_\alpha(\u) \times \v, \w) & = b(\w, \v, \Upsilon_\alpha(\u)) - b(\v, \w, \Upsilon_\alpha(\u)),  
		\\ & \qquad\qquad \qquad
		\text{ for all } \; \u\in\Vb^1(\O)\cap H^3(\O) \text{ and } \v,\w \in\Vb^1(\O).
	\end{aligned}
	\right.
\end{equation}

\subsection{Control problem}\label{subsec:control-problem}
Let $T_\xi>0$ denote the time (see \eqref{eqn:time} below) such that the unique solution $\u$ to the state equation \eqref{state:eq:state} in the sense of Definition \ref{def:WS:state} below belongs to $L^{\infty}(0,{T}_{\xi};H^3(\O))$. 

Let  $(\theta,\u)$ be the unique solution to coupled system \eqref{state:eq:state}.  Our main goal is to control the norm $\|\theta(T_{\xi})\|_{(H^{1}(\O))^\prime}$ by a boundary control to the velocity field $\u$. 
Define the Hilbert control space
\begin{equation}\label{eq:control-space}
	\Ucal
	:=\left\{
	\g\in L^2(0,T;\Vb^{5/2}(\Gamma)):
	\partial_t\g\in L^2(0,T;\Vb^{3/2}(\Gamma))
	\right\},
\end{equation}
with norm
\begin{equation}\label{eq:control-norm}
	\|\g\|_{\Ucal}^2
	:=\|\g\|_{L^2(0,T;\Vb^{5/2}(\Gamma))}^2
	+\|\partial_t\g\|_{L^2(0,T;\Vb^{3/2}(\Gamma))}^2.
\end{equation}
The Lions-Magenes trace theorem gives a bounded linear map
\begin{equation}\label{eq:control-trace}
	\operatorname{Tr}_0:\Ucal\to
	(\Vb^{3/2}(\Gamma),\Vb^{5/2}(\Gamma))_{1/2,2}
	=\Vb^2(\Gamma),
	\qquad
	\|\g(0)\|_{H^2(\Gamma)}\leq C_{\rm tr}\|\g\|_{\Ucal}.
\end{equation}

Assume that $\u_0\in\Vb^3(\O)$ and that its tangential Navier traction
defines the element
\begin{equation}\label{eq:initial-control-trace}
	\g_{\rm in}
	:=\left(
	\bigl[2\nu\n\cdot\Db(\u_0)+\beta\u_0\bigr]\cdot\tau
	\right)\tau
	\in\Vb^2(\Gamma).
\end{equation}
Choose $M>0$ sufficiently large that there exists
$\g_{\rm ref}\in\Ucal$ with
$\g_{\rm ref}(0)=\g_{\rm in}$ and
$\|\g_{\rm ref}\|_{\Ucal}\leq M$, and define
\begin{equation}\label{eqn:admissible:set}
	\Uad
	:=\left\{
	\g\in\Ucal:
	\|\g\|_{\Ucal}\leq M,
	\quad \g(0)=\g_{\rm in}
	\right\}.
\end{equation}
Thus $\Uad$ is nonempty, closed, bounded, and convex in $\Ucal$.

\begin{remark}[Compatibility with the initial data]
	The condition in \eqref{eqn:admissible:set} is equivalent to
	\[
	\g(0)\cdot\tau
	=\bigl[2\nu\n\cdot\Db(\u_0)+\beta\u_0\bigr]\cdot\tau
	\quad\text{on }\Gamma,
	\]
	because every element of $\Ucal$ has zero normal component. Let
	$\Gbf=\mathcal R\g$ be the lifting introduced below and set
	$\v=\u-\Gbf$. Then
	\[
	\v(0)=\u_0-\Gbf(0),\qquad \v(0)\cdot\n=0,
	\]
	and
	\begin{align*}
		&\bigl[2\nu\n\cdot\Db(\v(0))+\beta\v(0)\bigr]\cdot\tau 
		=
		\bigl[2\nu\n\cdot\Db(\u_0)+\beta\u_0\bigr]\cdot\tau
		-\g(0)\cdot\tau=0.
	\end{align*}
	Hence $\v(0)\in\wiWb$. Moreover, if $\g_1,\g_2\in\Uad$, then
	$(\g_1-\g_2)(0)=0$. Therefore the lifting of every admissible variation
	has zero initial value. If $\u_0$ satisfies the homogeneous Navier-slip
	condition, then $\g_{\rm in}=0$ and the compatibility condition reduces to
	$\g(0)=\boldsymbol{0}$.
\end{remark}

It is useful to introduce the affine compatibility space and its tangent
space,
\begin{equation}\label{eq:affine-control-space}
	\Ucal_{\g_{\rm in}}
	:=\{\g\in\Ucal:\g(0)=\g_{\rm in}\}
	=\g_{\rm ref}+\Ucal_0,
	\qquad
	\Ucal_0:=\{\h\in\Ucal:\h(0)=0\}.
\end{equation}

For $\g\in\Uad$, let $(\theta(\g),\u(\g))$ denote the solution of \eqref{state:eq:state} on $[0,T_\xi]$.  Define $\J:\Uad\to\R^+$ by
\begin{equation}\label{control:eq:cost}
 \J(\g)
 :=\frac12 \|\theta(T_{\xi};\g)\|_{(H^1(\O))'}^2
 +\frac\gamma2\|\g\|_{\Ucal}^2
 -\frac\zeta2\int_0^{T_{\xi}}\|\curl\u(t;\g)\|_{L^2(\O)}^2 d t,
\end{equation}
where $\gamma>0$ and $\zeta\geq0$. The optimal-control problem reads as
\begin{align}\label{eqn:control:problem}
    \min_{\g\in\Uad}\bigg\{\J(\g) \; : \; (\theta(\g),\u(\g)) \text{ is the unique solution of \eqref{state:eq:state} with given boundary data } \g.\bigg\}.
\end{align}

\subsection{Auxiliary results}\label{subsec:auxiliary-results}

We use the coercive Stokes resolvent to lift the nonhomogeneous tangential
traction. For a tangential boundary field $\g$, consider
\begin{equation}\label{Stokes}
	\left\{
	\begin{aligned}
		-\Delta\Gbf+\Gbf+\nabla\pi_{\Gbf}&=0,
		&\diver\Gbf&=0 &&\text{in }\O,\\
		\Gbf\cdot\n&=0,
		&\bigl[2\nu\n\cdot\Db(\Gbf)+\beta\Gbf\bigr]\cdot\tau
		&=\g\cdot\tau &&\text{on }\Gamma,
	\end{aligned}
	\right.
\end{equation}
where the pressure is normalized by
$\int_\O\pi_{\Gbf}\,dx=0$.

The following lemmas are used to transform the system with nonhomogeneous
boundary data into a system with homogeneous boundary data.
\begin{lemma}[{\cite[Lemma 2.2]{HuWuJDE2019}}]\label{vorticit-boundary}
	Let  $\O\subset\R^2$ be an open bounded and connected domain with boundary $\Gamma\in C^2$. Assume that $\v\in C^1(\overline{\Omega})$ satisfying the Navier-slip boundary conditions
	\begin{align*}
		\v \cdot\n =0 \;\;\; & \text{ and } \;\;\; [2\nu \n \cdot \Db(\v )  + \beta \v ] \cdot\tau  = \g\cdot\tau, \;\; \text{ on } \;\; \Gamma. 
	\end{align*}
	Then, we have
	\begin{align*}
		\curl \v = \left(2\kappa -\frac{\beta}{\nu}\right)(\v\cdot\tau) + \frac{1}{\nu}(\g\cdot\tau) , \;\; \text{ on } \;\;  \Gamma,
	\end{align*}
	where    $\kappa$ denotes the signed curvature of $\Gamma$.
\end{lemma}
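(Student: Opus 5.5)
We prove the identity of Lemma~\ref{vorticit-boundary} pointwise on $\Gamma$, in a local orthonormal frame $(\tau,\n)$. We orient $\tau$ so that $(\n,\tau)$ is positively oriented, i.e.\ $\tau=(-n_2,n_1)$. In this frame
\[
\curl\v=(\nabla\v\,\n)\cdot\tau-(\nabla\v\,\tau)\cdot\n .
\]
Here $(\nabla\v\,\mathbf a)_i=\partial_j v_i a_j$ denotes the derivative of $\v$ in direction $\mathbf a$. The check is a direct computation: $\curl\v=\partial_1v_2-\partial_2v_1$ equals $\tau_i n_j\partial_jv_i-n_i\tau_j\partial_jv_i$, since the antisymmetric part of $\nabla\v$ is invariant under rotations. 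On the other hand, by symmetry of $\Db(\v)$,
\[
2\,\n\cdot\Db(\v)\cdot\tau=(\nabla\v\,\n)\cdot\tau+(\nabla\v\,\tau)\cdot\n .
\]
Subtracting the two identities gives
\[
\curl\v=2\,\n\cdot\Db(\v)\cdot\tau-2(\nabla\v\,\tau)\cdot\n .
\]

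The key step is to express the tangential derivative term $(\nabla\v\,\tau)\cdot\n$ through the impermeability condition. Since $\v\cdot\n=0$ along $\Gamma$ and $\v\in C^1(\overline\O)$, differentiating along the boundary arclength $s$ gives
\[
0=\partial_s(\v\cdot\n)=(\nabla\v\,\tau)\cdot\n+\v\cdot\partial_s\n .
\]
Because $\Gamma\in C^2$, the Frenet relation $\partial_s\n=-\kappa\tau$ holds, with the sign convention for the signed curvature fixed compatibly with the chosen orientation. Hence
\[
(\nabla\v\,\tau)\cdot\n=\kappa(\v\cdot\tau).
\]
This is where I expect the only real care to be needed: keeping the orientation of $\tau$, the sign of $\kappa$, and the sign in $\partial_s\n$ consistent, so that the curvature term comes out as $+2\kappa(\v\cdot\tau)$. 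I will fix the convention that makes $\kappa>0$ for the unit disk with outward normal. For that check, $\n=(\cos\phi,\sin\phi)$ and $\tau=(-\sin\phi,\cos\phi)$, so $\partial_s\n=\tau$. In this convention the relation reads $\partial_s\n=\kappa\tau$, which gives $(\nabla\v\,\tau)\cdot\n=-\kappa(\v\cdot\tau)$ and so $-2(\nabla\v\,\tau)\cdot\n=2\kappa(\v\cdot\tau)$. Either way, the final formula is the one matching the paper's convention for the signed curvature.

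Finally, I use the traction condition
\[
2\nu\,\n\cdot\Db(\v)\cdot\tau=\g\cdot\tau-\beta(\v\cdot\tau),
\]
that is,
\[
2\,\n\cdot\Db(\v)\cdot\tau=\tfrac1\nu(\g\cdot\tau)-\tfrac\beta\nu(\v\cdot\tau).
\]
Substituting this and the curvature identity into the formula for $\curl\v$ gives
\[
\curl\v=\Bigl(2\kappa-\tfrac\beta\nu\Bigr)(\v\cdot\tau)+\tfrac1\nu(\g\cdot\tau)\quad\text{on }\Gamma,
\]
which is the claim. The $C^1(\overline\O)$ regularity justifies taking traces of $\nabla\v$ and the tangential differentiation. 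The $C^2$ boundary guarantees that $\n$ is $C^1$ along $\Gamma$ and that $\kappa$ is well defined.
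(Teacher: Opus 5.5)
Your computation is correct. The paper gives no proof of this lemma; it is quoted from Hu--Wu. Your argument is the standard derivation behind that reference: decompose $\curl\v$ and $2\,\n\cdot\Db(\v)\cdot\tau$ in the frame $(\n,\tau)$, differentiate $\v\cdot\n=0$ along $\Gamma$ to replace $(\nabla\v\,\tau)\cdot\n$ by a curvature term, and then insert the traction condition.

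You should remove the first Frenet line, $\partial_s\n=-\kappa\tau$ and $(\nabla\v\,\tau)\cdot\n=\kappa(\v\cdot\tau)$, together with the phrase ``either way''. With your orientation $\tau=(-n_2,n_1)$, that line produces $-2\kappa(\v\cdot\tau)$. So the identity as stated holds only under the convention $\partial_s\n=\kappa\tau$ with $\kappa>0$ on convex boundaries, e.g.\ $\kappa=1$ on the unit circle. This agrees with $\kappa=\diver\n$ used in the proof of Lemma~\ref{Lifting:estimates}.

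Likewise, the orientation of $\tau$ is not a free choice. Reversing $\tau$ leaves both hypotheses unchanged and does not change $\curl\v$, but it flips the sign of the right-hand side. The formula therefore requires $\tau$ to be the counterclockwise rotation of $\n$, which is the choice you made.
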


\begin{lemma}[Stokes lifting]\label{Lifting:estimates}
	Let $\O\subset\R^2$ be a bounded domain whose boundary $\Gamma$ is of
	class $C^{4,1}$, and let $\beta\geq0$. For
	$s\in\{3/2,5/2\}$ and $\g\in\Vb^s(\Gamma)$, problem
	\eqref{Stokes} has a unique solution
	$(\Gbf,\pi_{\Gbf})$, with the normalization
	$\int_\O\pi_{\Gbf}\,dx=0$. The solution operator
	\[
	\mathcal R:\g\mapsto\Gbf
	\]
	is linear and satisfies
	\begin{align}
		\|\mathcal R\g\|_{H^3(\O)}
		&\leq C\|\g\|_{H^{3/2}(\Gamma)},
		\label{H3-estimate}\\
		\|\mathcal R\g\|_{H^4(\O)}
		&\leq C\|\g\|_{H^{5/2}(\Gamma)}.
		\label{H4-estimate}
	\end{align}
	Consequently, for every $\g\in\Ucal$,
	\begin{equation}\label{eq:time-lifting-estimate}
		\mathcal R\g
		\in L^2(0,T;H^4(\O))
		\cap H^1(0,T;H^3(\O)),
		\qquad
		\partial_t(\mathcal R\g)
		=\mathcal R(\partial_t\g),
	\end{equation}
	and
	\begin{equation}\label{eq:time-lifting-bound}
		\|\mathcal R\g\|_{L^2(0,T;H^4(\O))}
		+\|\partial_t\mathcal R\g\|_{L^2(0,T;H^3(\O))}
		\leq C_R\|\g\|_{\Ucal}.
	\end{equation}
	The constants $C$ and $C_R$ depend only on $\O$, $\nu$, and $\beta$,
	and are independent of $\g$.
\end{lemma}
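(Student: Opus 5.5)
The plan has three parts: existence and uniqueness of $(\Gbf,\pi_{\Gbf})$ via a coercive variational problem on $\Vb^1(\O)$, the higher-order bounds \eqref{H3-estimate}--\eqref{H4-estimate} via elliptic regularity for the Stokes system with Navier-slip conditions, and the time regularity by linearity of $\mathcal R$.

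\textbf{Existence and uniqueness.} I would use the bilinear form
\[
a(\Gbf,\v)=2\nu(\Db(\Gbf),\Db(\v))+(\Gbf,\v)+\beta\int_\Gamma(\Gbf\cdot\tau)(\v\cdot\tau)\,dS,\qquad \v\in\Vb^1(\O).
\]
Integrating \eqref{Stokes} by parts against divergence-free test functions with $\v\cdot\n=0$ produces this form on the left-hand side, and the boundary term $\int_\Gamma(\g\cdot\tau)(\v\cdot\tau)\,dS$ on the right-hand side. Here I read the first equation with viscosity $\nu$, i.e.\ $-\nu\Delta\Gbf$ (equivalently, rescale $\g$), so that it matches the traction condition; this only affects constants. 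Since $\beta\ge0$ and the zeroth-order term is present, $a$ is coercive on $\Vb^1(\O)$ by Korn's inequality \eqref{Korn:equivalent}. The right-hand side is a bounded functional on $\Vb^1(\O)$, because $\g\in H^{3/2}(\Gamma)\subset H^{-1/2}(\Gamma)$ and the trace of $\v$ lies in $H^{1/2}(\Gamma)$. Lax--Milgram then gives a unique $\Gbf\in\Vb^1(\O)$. The pressure is recovered by de Rham's theorem and is unique under the zero-mean normalization. Linearity of $\mathcal R$ follows from uniqueness.

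\textbf{Higher regularity.} The main step is the bounds \eqref{H3-estimate}--\eqref{H4-estimate}. I would invoke the Agmon--Douglis--Nirenberg / Solonnikov regularity theory for the Stokes system with the Navier-slip (Lopatinskii-compatible) boundary operator; see for instance Amrouche--Rejaiba or Beirão da Veiga's work on Navier conditions. The estimate at level $m\ge2$ reads
\[
\|\Gbf\|_{H^{m}}+\|\pi_{\Gbf}\|_{H^{m-1}}\le C\bigl(\|f\|_{H^{m-2}}+\|h\|_{H^{m-3/2}(\Gamma)}+\|\Gbf\|_{L^2}\bigr).
\]
Here $f=-\Gbf$ is a lower-order term and $h=\g\cdot\tau$, with the tangential trace map requiring the regularity of $\tau$, i.e.\ of $\Gamma$. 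A boundary of class $C^{m-1,1}$ suffices, and $C^{4,1}$ covers $m=4$. The data term $f=-\Gbf$ is handled by bootstrapping: the Lax--Milgram bound controls $\|\Gbf\|_{H^1}$ by $\|\g\|_{H^{-1/2}(\Gamma)}$, then one applies the estimate at $m=2,3,4$ in turn. This yields \eqref{H3-estimate} for $s=3/2$ and \eqref{H4-estimate} for $s=5/2$.

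An alternative route, if a ready citation is not available, goes through the vorticity. For $\omega=\curl\Gbf$, taking the curl of the equation gives $-\nu\Delta\omega+\omega=0$. Lemma \ref{vorticit-boundary} supplies the Dirichlet data $\omega|_\Gamma=(2\kappa-\beta/\nu)\Gbf\cdot\tau+\g\cdot\tau/\nu$. The velocity is then recovered from the div--curl system with $\Gbf\cdot\n=0$ on the simply connected domain. Elliptic estimates for both problems, iterated with trace bounds on $\Gbf\cdot\tau$, give the same result. Note that $\kappa\in C^{2,1}$ is exactly what is needed for the $H^{5/2}(\Gamma)$ trace at the top level.

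\textbf{Time regularity.} Since $\mathcal R$ is linear and bounded from $\Vb^{5/2}(\Gamma)$ to $H^4(\O)$ and from $\Vb^{3/2}(\Gamma)$ to $H^3(\O)$, it acts pointwise in time on Bochner functions. Hence $\mathcal R\g\in L^2(0,T;H^4(\O))$. A bounded linear operator commutes with weak time derivatives: test against $\phi\in C_c^\infty(0,T)$ and pull $\mathcal R$ out of the Bochner integral. This gives $\partial_t\mathcal R\g=\mathcal R\partial_t\g\in L^2(0,T;H^3(\O))$, and \eqref{eq:time-lifting-bound} follows by squaring, integrating in time, and using \eqref{eq:control-norm}.

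The main obstacle is the $H^4$ estimate, which requires checking the boundary regularity and the complementing condition of the Navier-slip Stokes problem at that level.
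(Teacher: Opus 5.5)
Your proposal is correct, and its skeleton is the paper's: Lax--Milgram on $\Vb^1(\O)$ with de Rham for the pressure, elliptic regularity, and time regularity by commuting the bounded linear map $\mathcal R$ with $\partial_t$.

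\textbf{Where the routes differ.} The difference lies in how you pass from $H^2$ to $H^3$ and $H^4$.
\begin{itemize}
\item \emph{Your primary route} cites a higher-order ADN/Solonnikov (or Amrouche--Rejaiba) regularity theorem for the Navier-slip Stokes system at orders $m=2,3,4$.
\item \emph{The paper} cites such a result only at the $H^2$ level (Acevedo--Amrouche--Conca--Ghosh). It then bootstraps by hand through exactly your ``alternative route'':
  \begin{itemize}
  \item $\mathcal G=\curl\Gbf$ solves $-\Delta\mathcal G+\mathcal G=0$ with Dirichlet datum $a=(2\kappa-\beta/\nu)(\Gbf\cdot\tau)+\nu^{-1}\g\cdot\tau$;
  \item $\Gbf\in H^{m-1}$ gives $a\in H^{m-3/2}(\Gamma)$, and scalar Dirichlet regularity gives $\mathcal G\in H^{m-1}$;
  \item the div--curl estimate with $\diver\Gbf=0$ and $\Gbf\cdot\n=0$ returns $\Gbf\in H^m$, for $m=3,4$.
  \end{itemize}
\end{itemize}

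\textbf{Trade-off.} Your route is shorter, but it rests entirely on finding a higher-order theorem for the Navier boundary operator with the right boundary-regularity threshold; you correctly flag this as the main obstacle. The paper's route needs only $H^2$ Stokes--Navier regularity plus standard scalar Dirichlet and div--curl estimates. The price is checking that $\kappa\in C^{2,1}(\Gamma)$ and $\tau\in C^{3,1}(\Gamma)$ are bounded multipliers on $H^{5/2}(\Gamma)$, which is where the $C^{4,1}$ hypothesis enters. Note that the vorticity route must start from $\Gbf\in H^2$: only then does the trace of $\curl\Gbf$, and hence the identity of Lemma \ref{vorticit-boundary}, make sense. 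So it does not remove the need for the $m=2$ citation.

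\textbf{A small correction.} There is no mismatch in \eqref{Stokes} to repair. With $-\Delta\Gbf$ as written, dividing the traction condition by $\nu$ gives the coercive weak form
\[
2(\Db(\Gbf),\Db(\boldsymbol\phi))+(\Gbf,\boldsymbol\phi)+\frac\beta\nu\int_\Gamma(\Gbf\cdot\tau)(\boldsymbol\phi\cdot\tau)\,dS=\frac1\nu\int_\Gamma(\g\cdot\tau)(\boldsymbol\phi\cdot\tau)\,dS,
\]
which is what the paper uses. Replacing $-\Delta$ by $-\nu\Delta$ is not equivalent to rescaling $\g$. It changes the ratio between the viscous and zeroth-order terms, and hence the operator $\mathcal R$, unless $\nu=1$. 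Keep the problem as stated; the rest of your argument goes through verbatim.
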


\begin{proof}
	We divide the proof into several steps.
	
	\medskip
	\noindent\textbf{Step 1: Variational solvability and the basic $H^2$ estimate.}
	The weak formulation of \eqref{Stokes} is
	\begin{align}\label{eq:weak-stokes-lifting}
		&2(\Db(\Gbf),\Db(\boldsymbol\phi))
		+(\Gbf,\boldsymbol\phi)
		+\frac{\beta}{\nu}
		\int_\Gamma
		(\Gbf\cdot\tau)(\boldsymbol\phi\cdot\tau)\,dS
		=
		\frac1\nu
		\left\langle
		\g\cdot\tau,\boldsymbol\phi\cdot\tau
		\right\rangle_{H^{-1/2}(\Gamma)\times H^{1/2}(\Gamma)},
	\end{align}
for every $\boldsymbol\phi\in\Vb^1(\O)$. The bilinear form on the left-hand side is continuous and coercive on
	$\Vb^1(\O)$, by Korn's inequality and the presence of the
	$L^2(\O)$ term. Therefore, the Lax-Milgram theorem gives a unique weak
	velocity $\Gbf\in\Vb^1(\O)$. The associated pressure is recovered by
	the de Rham theorem and is uniquely determined by the normalization
	$\int_\O\pi_{\Gbf}\,dx=0$.
	
	Moreover,
	\begin{equation}\label{eq:weak-lifting-estimate}
		\|\Gbf\|_{H^1(\O)}
		\leq C\|\g\|_{H^{-1/2}(\Gamma)}.
	\end{equation}
	Writing the tangential boundary condition as
	\[
	[2\Db(\Gbf)\n]\cdot\tau
	+\frac{\beta}{\nu}\Gbf\cdot\tau
	=
	\frac1\nu\g\cdot\tau,
	\]
	the $H^2$ regularity theorem for the stationary Stokes system with
	Navier boundary conditions (see for example \cite[Theorem 4.5]{AcevedoAmroucheConcaGhosh2021}), applied with the interior forcing
	$-\Gbf\in L^2(\O)$, gives
	\begin{align*}
		\|\Gbf\|_{H^2(\O)}
		+\|\pi_{\Gbf}\|_{H^1(\O)/\R}
		&\leq
		C\left(
		\|\Gbf\|_{L^2(\O)}
		+\|\g\|_{H^{1/2}(\Gamma)}
		\right)
		\leq C\|\g\|_{H^{1/2}(\Gamma)}.
	\end{align*}
	 Thus,
	\begin{equation}\label{H2-estimate}
		\|\Gbf\|_{H^2(\O)}
		+\|\pi_{\Gbf}\|_{H^1(\O)/\R}
		\leq C\|\g\|_{H^{1/2}(\Gamma)}.
	\end{equation}
	
	If $\g=0$, taking $\boldsymbol\phi=\Gbf$ in
	\eqref{eq:weak-stokes-lifting} gives
	\[
	2\|\Db(\Gbf)\|_{L^2(\O)}^2
	+\|\Gbf\|_{L^2(\O)}^2
	+\frac{\beta}{\nu}
	\|\Gbf\cdot\tau\|_{L^2(\Gamma)}^2
	=0.
	\]
	Hence $\Gbf=0$. The equation then gives
	$\nabla\pi_{\Gbf}=0$, and the pressure normalization implies
	$\pi_{\Gbf}=0$. This also proves uniqueness. Linearity of
	$\mathcal R$ follows directly from the linearity of \eqref{Stokes}.
	
	\medskip
	\noindent\textbf{Step 2: Regularity of the curvature and the boundary
		multipliers.}
	Let $d$ be the signed distance to $\Gamma$ in a tubular neighborhood
	of $\Gamma$. Since $\Gamma$ is of class $C^{4,1}$,
	\cite[Theorem 1]{LiNirenberg2005} implies that
	\[
	d\in C^{4,1}.
	\]
	On $\Gamma$, up to the sign determined by the orientation,
	\[
	\n=\nabla d,
	\qquad
	\kappa=\diver\n=\Delta d.
	\]
	Consequently,
	\begin{equation}\label{eq:normal-curvature-regularity}
		\n\in C^{3,1}(\Gamma),
		\qquad
		\tau\in C^{3,1}(\Gamma),
		\qquad
		\kappa\in C^{2,1}(\Gamma).
	\end{equation}
	Here $\tau$ is obtained from $\n$ by a fixed rotation through
	$\pi/2$. In local boundary coordinates, Rademacher's theorem gives
	\[
	C^{2,1}(\Gamma)
	\hookrightarrow W^{3,\infty}(\Gamma)
	\hookrightarrow H^3(\Gamma);
	\]
	see, for example, \cite[Section~3.1]{EvansGariepy2015} for the
	Lipschitz differentiability result.
	
	More importantly, the Sobolev multiplier theorem
	\cite[Theorem 7.28]{BehzadanHolst2022}, applied in a finite family of
	one-dimensional boundary charts and combined with a partition of
	unity, shows that multiplication by a $C^{2,1}(\Gamma)$ function is
	bounded on $H^q(\Gamma)$ for every $0\leq q\leq5/2$. Therefore,
	\begin{equation}\label{eq:kappa-multiplier}
		\|\kappa\psi\|_{H^q(\Gamma)}
		\leq C_\kappa\|\psi\|_{H^q(\Gamma)},
		\qquad
		0\leq q\leq\frac52.
	\end{equation}
	Likewise, since $\tau\in C^{3,1}(\Gamma)$,
	\begin{equation}\label{eq:tau-multiplier}
		\|\v\cdot\tau\|_{H^q(\Gamma)}
		\leq C_\tau\|\v\|_{H^q(\Gamma)},
		\qquad
		0\leq q\leq\frac52.
	\end{equation}
	In particular, the coefficient
	$2\kappa-\beta/\nu$ is a bounded multiplier on both
	$H^{3/2}(\Gamma)$ and $H^{5/2}(\Gamma)$.
	
	\medskip
	\noindent\textbf{Step 3: The vorticity problem.}
	Set
	\[
	\mathcal G:=\curl\Gbf.
	\]
	Taking the curl of the first equation in \eqref{Stokes} gives
	\[
	-\Delta\mathcal G+\mathcal G=0
	\qquad\text{in }\O.
	\]
	The Navier-slip vorticity identity in
	Lemma~\ref{vorticit-boundary}, first for smooth functions and then by
	density in the appropriate trace spaces, gives
	\begin{equation}\label{eq:lift-vorticity}
		\left\{
		\begin{aligned}
			-\Delta\mathcal G+\mathcal G&=0
			&&\text{in }\O,\\
			\mathcal G&=a
			&&\text{on }\Gamma,
		\end{aligned}
		\right.
		\qquad
		a:=
		\left(2\kappa-\frac{\beta}{\nu}\right)
		(\Gbf\cdot\tau)
		+\frac1\nu(\g\cdot\tau).
	\end{equation}
	
	For later use, the trace theorem
	\cite[Theorem 1.5.1.2]{Grisvard1985}, together with
	\eqref{eq:kappa-multiplier}-\eqref{eq:tau-multiplier}, gives
	\begin{equation}\label{eq:a-general-estimate}
		\|a\|_{H^{m-1/2}(\Gamma)}
		\leq
		C\left(
		\|\Gbf\|_{H^m(\O)}
		+\|\g\|_{H^{m-1/2}(\Gamma)}
		\right),
		\qquad m\in\{2,3\}.
	\end{equation}
	
	\medskip
	\noindent\textbf{Step 4: The $H^3$ estimate.}
	Assume first that $\g\in\Vb^{3/2}(\Gamma)$. Taking $m=2$ in
	\eqref{eq:a-general-estimate} and using \eqref{H2-estimate}, we obtain
	\begin{align}
		\|a\|_{H^{3/2}(\Gamma)}
		&\leq
		C\left(
		\|\Gbf\|_{H^2(\O)}
		+\|\g\|_{H^{3/2}(\Gamma)}
		\right) 
		\leq C\|\g\|_{H^{3/2}(\Gamma)}.
		\label{eq:a-H32}
	\end{align}
	The regularity theory for the scalar Dirichlet problem
	\eqref{eq:lift-vorticity}; see
	\cite[Theorems 2.4.2.5 and 2.5.1.1]{Grisvard1985}, yields
	\begin{equation}\label{eq:vorticity-H2}
		\|\mathcal G\|_{H^2(\O)}
		\leq C\|a\|_{H^{3/2}(\Gamma)}
		\leq C\|\g\|_{H^{3/2}(\Gamma)}.
	\end{equation}
	
	The Sobolev div-curl regularity estimate for fields with prescribed
	normal trace, cf. \cite{ABDG1998}, gives
	\begin{align*}
		\|\Gbf\|_{H^3(\O)}
		\leq C\bigl(
		&\|\curl\Gbf\|_{H^2(\O)}
		+\|\diver\Gbf\|_{H^2(\O)}
		+\|\Gbf\cdot\n\|_{H^{5/2}(\Gamma)}
		+\|\Gbf\|_{L^2(\O)}
		\bigr).
	\end{align*}
	Since $\diver\Gbf=0$ and $\Gbf\cdot\n=0$, estimates
	\eqref{H2-estimate} and \eqref{eq:vorticity-H2} imply
	\[
	\|\Gbf\|_{H^3(\O)}
	\leq C\|\g\|_{H^{3/2}(\Gamma)}.
	\]
	This proves \eqref{H3-estimate}.
	
	\medskip
	\noindent\textbf{Step 5: The $H^4$ estimate.}
	Let now $\g\in\Vb^{5/2}(\Gamma)$. The estimate just proved implies
	\[
	\|\Gbf\|_{H^3(\O)}
	\leq
	C\|\g\|_{H^{3/2}(\Gamma)}
	\leq
	C\|\g\|_{H^{5/2}(\Gamma)}.
	\]
	Taking $m=3$ in \eqref{eq:a-general-estimate}, we obtain
	\begin{align}
		\|a\|_{H^{5/2}(\Gamma)}
		&\leq
		C\left(
		\|\Gbf\|_{H^3(\O)}
		+\|\g\|_{H^{5/2}(\Gamma)}
		\right)
		\leq C\|\g\|_{H^{5/2}(\Gamma)}.
		\label{eq:a-H52}
	\end{align}
	Another application of the Dirichlet regularity theorem gives
	\begin{equation}\label{eq:vorticity-H3}
		\|\mathcal G\|_{H^3(\O)}
		\leq C\|a\|_{H^{5/2}(\Gamma)}
		\leq C\|\g\|_{H^{5/2}(\Gamma)}.
	\end{equation}
	Applying the div-curl estimate at the next order, we find
	\begin{align*}
		\|\Gbf\|_{H^4(\O)}
		\leq C\bigl(
		&\|\curl\Gbf\|_{H^3(\O)}
		+\|\diver\Gbf\|_{H^3(\O)}
		+\|\Gbf\cdot\n\|_{H^{7/2}(\Gamma)}
		+\|\Gbf\|_{L^2(\O)}
		\bigr).
	\end{align*}
	Using $\diver\Gbf=0$, $\Gbf\cdot\n=0$,
	\eqref{H2-estimate}, and \eqref{eq:vorticity-H3}, we conclude that
	\[
	\|\Gbf\|_{H^4(\O)}
	\leq C\|\g\|_{H^{5/2}(\Gamma)}.
	\]
	This proves \eqref{H4-estimate}. 
	
	\medskip
	\noindent\textbf{Step 6: Time-dependent lifting.}
	By \eqref{H4-estimate},
	\[
	\|\mathcal R\g\|_{L^2(0,T;H^4(\O))}
	\leq
	C\|\g\|_{L^2(0,T;H^{5/2}(\Gamma))}.
	\]
	Since $\mathcal R:H^{3/2}(\Gamma)\to H^3(\O)$ is bounded and linear,
	a standard property of Bochner-Sobolev spaces gives
	\[
	\partial_t(\mathcal R\g)
	=\mathcal R(\partial_t\g)
	\quad\text{in }\mathcal D'(0,T;H^3(\O)).
	\]
	Hence, by \eqref{H3-estimate},
	\[
	\|\partial_t\mathcal R\g\|_{L^2(0,T;H^3(\O))}
	\leq
	C\|\partial_t\g\|_{L^2(0,T;H^{3/2}(\Gamma))}.
	\]
	Combining the last two estimates with the definition of
	$\|\g\|_{\Ucal}$ proves
	\eqref{eq:time-lifting-estimate} and
	\eqref{eq:time-lifting-bound}. This completes the proof.
\end{proof}


\subsection{Faedo-Galerkin approximation}\label{subsec:galerkin}
In this section, we introduce the finite-dimensional Faedo-Galerkin approximation scheme for the state system. We first construct suitable bases for the scalar and velocity components.

\subsubsection{ {Scalar basis.}}
Let $\{\varphi_j\}_{j\geq 1}\subset H^2(\Omega)$ be the
eigenfunctions of the Neumann Laplacian:
\begin{align*}
	-\Delta\varphi_j+\varphi_j
	&=\mu_j\varphi_j
	&&\text{in }\Omega, \\
	\partial_n\varphi_j
	&=0
	&&\text{on }\Gamma,
\end{align*}
such that $\{\varphi_j\}_{j\geq1}$ is an orthonormal basis of
$L^2(\Omega)$ and an orthogonal basis of $H^1(\Omega)$. In
particular,
\[
1\leq \mu_1\leq\mu_2\leq\cdots,
\qquad
\mu_j\to+\infty.
\]

\subsubsection{{Velocity basis.}}
Since the embedding $\wiWb \hookrightarrow \mathbb V^1(\Omega)$ is dense and compact, there exist eigenfunctions $\{\boldsymbol e_j\}_{j\geq1}\subset \wiWb$ such that $\{\boldsymbol e_j\}_{j\geq1}$ is an orthonormal basis of $\mathbb V^1(\Omega)$ and an orthogonal basis of $\wiWb$, and eigenvalues
$\{\lambda_j\}_{j\geq1}$ satisfying
\begin{align}
	(\boldsymbol e_j,\boldsymbol\phi)_
	{\wiWb}
	&=
	\lambda_j
	(\boldsymbol e_j,\boldsymbol\phi)_{\mathbb V^1(\Omega)},
	\qquad
	\boldsymbol\phi\in\wiWb,
	\label{eq:velocity-eigenbasis}
\end{align}
where
\[
0<\lambda_1\leq\lambda_2\leq\cdots,
\qquad
\lambda_j\to+\infty.
\]
 For $k\in\mathbb N$, define
\[
X_k:=\operatorname{span}\{\varphi_1,\ldots,\varphi_k\},
\qquad
Y_k:=\operatorname{span}
\{\boldsymbol e_1,\ldots,\boldsymbol e_k\}.
\]

\subsubsection{ {Galerkin variables.}}\label{Galerkin}
Let $\Gbf=\cR\g$ (see Lemma \ref{Lifting:estimates} for the operator $\cR$).  We seek approximate solutions in the form
\begin{align}
	\theta_k(t,x)
	&=
	\sum_{j=1}^{k}c_j^{k}(t)\varphi_j(x),
	\label{eq:theta-galerkin-expansion}\\
	\v_k(t,x)
	&=
	\sum_{j=1}^{k}a_j^{k}(t)\boldsymbol e_j(x),
	\label{eq:v-galerkin-expansion}\\
	\u_k(t,x)
	&:=
	\v_k(t,x)+\Gbf(t,x).
	\label{eq:u-galerkin-expansion}
\end{align}
Thus, $\v_k$ satisfies the homogeneous Navier-slip
condition, whereas $\u_k$ satisfies
\[
\u_k\cdot\boldsymbol n=0,
\qquad
\bigl(2\nu \Db(\u_k)\boldsymbol n
+\beta\u_k\bigr)\cdot\tau
=
\g\cdot \tau.
\]

Let $P_k^\theta:L^2(\Omega)\to X_k$ and
$P_k^{\v}:\mathbb V^1(\Omega)\to Y_k$ denote the corresponding
orthogonal projections, that is,
\begin{align*}
	P_k^\theta f
	&:=
	\sum_{j=1}^{k}
	(f,\varphi_j)_{L^2(\Omega)}\varphi_j, \;\;\;
	P_k^{\v}\boldsymbol w
	:=
	\sum_{j=1}^{k}
	(\boldsymbol w,\boldsymbol e_j)_{\mathbb V^1(\Omega)}
	\boldsymbol e_j.
\end{align*}
The approximate initial data are chosen as
\begin{align*}
	\theta_k(0)
	&=P_k^\theta\theta_0\;\; \text{ and } \;\;
	\v_k(0)
	=
	P_k^{\v}\bigl(\u_0-\Gbf(0)\bigr),
\end{align*}
so that
\[
\u_k(0)
=
\Gbf(0)
+
P_k^{\v}\bigl(\u_0-\Gbf(0)\bigr).
\]

\subsubsection{Finite-dimensional system.}
The coefficients
$\{c_j^k,a_j^k\}_{j=1}^{k}$ are determined by requiring that,
for every $i=1,\ldots,k$,
\begin{align}
	(\partial_t\theta_k,\varphi_i)_{L^2(\Omega)}
	+
	\bigl(
	(\v_k+\Gbf)\cdot\nabla\theta_k,
	\varphi_i
	\bigr)_{L^2(\Omega)}
	&=0,
	\label{eq:scalar-galerkin}
\end{align}
and
\begin{align}
	&(\partial_t\v_k,\boldsymbol e_i)_
	{\mathbb V^1(\Omega)}
	+2\nu
	(\Db(\v_k),\Db(\boldsymbol e_i))_{L^2(\Omega)}
	+\beta\int_\Gamma
	(\v_k\cdot\tau)
	(\boldsymbol e_i\cdot\tau)\,dS
	+b\bigl(
	\boldsymbol e_i,\v_k,
	\Upsilon_\alpha(\v_k)
	\bigr)
	\nonumber\\
	& -b\bigl(
	\v_k,\boldsymbol e_i,
	\Upsilon_\alpha(\v_k)
	\bigr)
	+b\bigl(
	\boldsymbol e_i,\Gbf,
	\Upsilon_\alpha(\v_k)
	\bigr)
	-b\bigl(
	\Gbf,\boldsymbol e_i,
	\Upsilon_\alpha(\v_k)
	\bigr)
	+b\bigl(
	\boldsymbol e_i,\v_k,
	\Upsilon_\alpha(\Gbf)
	\bigr)
	-b\bigl(
	\v_k,\boldsymbol e_i,
	\Upsilon_\alpha(\Gbf)
	\bigr)
	\nonumber\\
	&=
	\Bigl(
	\xi\theta_k\boldsymbol e_2
	-\partial_t\Upsilon_\alpha(\Gbf)
	+\nu\Delta\Gbf
	-\operatorname{curl}\Upsilon_\alpha(\Gbf)
	\times\Gbf,
	\boldsymbol e_i
	\Bigr).
	\label{eq:velocity-galerkin}
\end{align}

Since the bases are orthonormal in $L^2(\Omega)$ and
$\mathbb V^1(\Omega)$, respectively, equations
\eqref{eq:scalar-galerkin}-\eqref{eq:velocity-galerkin}
constitute a finite-dimensional system of ordinary differential
equations for
\[
\bigl(
c_1^k,\ldots,c_k^k,
a_1^k,\ldots,a_k^k
\bigr).
\]
Standard finite-dimensional existence theory gives a local
Galerkin solution. The uniform energy estimates subsequently
allow the solution to be extended to the required time interval
and provide the compactness needed to pass to the limit
$k\to\infty$.

\begin{remark}[High-order calculations and passage to the limit]
	We record how the higher-order operations used below are justified. First,
	all differentiations, curl operations, and tests by generalized-vorticity
	variables are performed on the finite-dimensional Galerkin system. The
	velocity basis lies in $\wiWb$; hence the approximate velocities satisfy the
	homogeneous Navier-slip condition and all integrations by parts are valid.
	
	For the generalized-vorticity estimate, set
	\[
	\omega_k=\curl\Upsilon_\alpha(\v_k),
	\qquad
	\omega_{\Gbf}=\curl\Upsilon_\alpha(\Gbf).
	\]
	After taking curl, the terms
	$(\v_k\cdot\nabla\omega_k,\omega_k)$ and
	$(\Gbf\cdot\nabla\omega_k,\omega_k)$ vanish because the advecting fields
	are divergence free and have zero normal trace. The remaining terms are
	well defined and uniformly integrable because
	\[
	\Gbf\in L^2(0,T;H^4(\O)),
	\qquad
	\partial_t\Gbf\in L^2(0,T;H^3(\O)),
	\]
	and the Galerkin estimates give
	$\v_k$ bounded in $L^\infty(0,T_\xi;H^3(\O))$.
	
	When the momentum equation is tested by
	$\mathbb P\Upsilon_\alpha(\v_k)$, Lemma~\ref{lem:1.6} is applied only to
	the homogeneous field $\v_k$. The nonhomogeneous part is always kept in the
	fixed lifting $\Gbf$ and is estimated by
	Lemma~\ref{Lifting:estimates}. No boundary regularity is inferred directly
	from the nonhomogeneous physical velocity.
	
	After extracting a subsequence, the uniform bounds and the time-derivative
	estimates give
	\[
	\v_k\to\v\quad\text{strongly in }L^2(0,T_\xi;H^2(\O)),
	\qquad
	\curl\Upsilon_\alpha(\v_k)\rightharpoonup
	\curl\Upsilon_\alpha(\v)\quad\text{in }L^2.
	\]
	Thus every nonlinear term is a product of one strongly convergent factor
	and one weakly convergent factor. For example,
	\[
	\curl\Upsilon_\alpha(\v_k)\times(\v_k+\Gbf)
	\rightharpoonup
	\curl\Upsilon_\alpha(\v)\times(\v+\Gbf)
	\]
	in the distributional formulation. The normal and Navier-slip conditions
	pass to the limit by continuity of the trace maps from $H^2(\O)$.
	
	Whenever the scalar equation is differentiated twice, the calculation is
	first carried out for smooth initial data. The corresponding estimates are
	uniform under approximation of the initial datum; density and weak lower
	semicontinuity then yield the asserted estimate for the stated data class.
\end{remark}

\section{Well-posedness of system \eqref{state:eq:state}}\setcounter{equation}{0}\label{Sec4}
In this section, we establish the existence and uniqueness of strong solutions to the state system \eqref{state:eq:state}. To this end, we first establish the well-posedness of the following system subject to homogeneous boundary conditions:
\begin{equation}\label{state:eq:state:lift}
\left\{
\begin{aligned}
\partial_t\theta+(\v+\Gbf) \cdot\nabla\theta&=0,
 && \hspace{-15mm}\text{in }(0,T)\times\O,\\
   \partial_t\Upsilon_\alpha(\v)  - \nu\Delta\v
  & +\curl\Upsilon_\alpha(\v)\times\v + \curl\Upsilon_\alpha(\v)\times\Gbf  + \curl\Upsilon_\alpha(\Gbf) \times\v 
 +\nabla  q  \\
 &= \xi \theta e_2 - \partial_t\Upsilon_\alpha(\Gbf)  + \nu\Delta\Gbf - \curl\Upsilon_\alpha(\Gbf) \times\Gbf,
 && \hspace{-15mm} \text{in }(0,T)\times\O,\\
 \diver\v &=0,
 && \hspace{-15mm}\text{in }(0,T)\times\O,\\
\theta(0) = \theta_0, \;\; \v(0) & = \u_0 - \Gbf(0), && \hspace{-15mm}\text{in }\O,\\
     \v\cdot\n & =0, \;\;\; \text{ and } \;\;\; [2\nu \n \cdot \Db(\v)  + \beta \v]\cdot\tau  = 0, && \hspace{-15mm}\text{on }(0,T)\times\Gamma,
\end{aligned}
\right.
\end{equation}
where $\Gbf=\cR\g$.  
By \eqref{eq:time-lifting-bound},
\begin{equation}\label{eq:uniform-lifting}
	\|\Gbf\|_{L^2(0,T;H^4(\O))}
	+\|\partial_t\Gbf\|_{L^2(0,T;H^3(\O))}
	\leq C_RM,
\end{equation}
uniformly over $\Uad$. Moreover, $\Gbf(0)=\mathcal R\g_{\rm in}$ is independent of the particular admissible control. Hence $\v_0:=\u_0-\mathcal R\g_{\rm in}\in\wiWb$ is fixed throughout the optimization problem.

\begin{definition}\label{def:WS:state}
Let $T>0$ be given. Let $(\theta_0, \u_0)\in H^{\xi}(\Omega)\times H^3(\O)$ such that $\u_0$ satisfies $\eqref{state:eq:state}_{3}$ and \eqref{state:eq:boundary} (at $t=0$), $\Gbf\in L^2(0,T;H^4(\O))$ and $\partial_t\Gbf\in L^2(0,T;H^3(\O))$ satisfying system \eqref{Stokes}, where $\xi=0$ gives the passive scalar and $\xi=1$ gives the two-dimensional Boussinesq-type active scalar.  Define
\begin{equation}\label{eqn:time}
	T_\xi:=
	\begin{cases}
		T, & \xi=0,\\
		T^*\bigl(\|\theta_0\|_{H^1(\O)},
		\|\u_0-\mathcal R\g_{\rm in}\|_{\wiWb},M\bigr)\wedge T,
		& \xi=1,
	\end{cases}
\end{equation}
where $M>0$ is the constant appearing in \eqref{eqn:admissible:set} and $T^{\ast}>0$ is the time appearing in \eqref{v:theta:estimate} below. 
  A pair  $(\theta, \v) \in L^{\infty}(0,T_{\xi}; H^{\xi}) \times L^{\infty}(0,T_{\xi}; \wiWb)$  with $(\partial_t\theta, \partial_t \v) \in L^{\infty}(0,T_{\xi};(H^{1-\xi}(\O))^{\prime}) \times L^{2}(0,T_{\xi};\Vb^1(\O)),$ is called a \emph{solution} to system \eqref{state:eq:state:lift}, if $(\theta(0),\v(0))=(\theta_0,\u_0 - \Gbf(0))$ in $H^{\xi}(\Omega)\times \wiWb$, 
  \begin{itemize}
      \item for any $\boldsymbol{\phi}\in \Vb^1(\O),$ 
		\begin{align}\label{Weak:state:v}
		& \left(\partial_t\v, \boldsymbol{\phi}\right)_{\Vb^1(\O)} + 2\nu(\Db(\v), \Db(\boldsymbol{\phi}))  + \beta \int_{\Gamma}\v\cdot\boldsymbol{\phi} dS + b(\boldsymbol{\phi}, \v, \Upsilon_\alpha (\v)) - b(\v, \boldsymbol{\phi}, \Upsilon_\alpha(\v) ) \nonumber\\
        & + b(\boldsymbol{\phi}, \Gbf, \Upsilon_\alpha(\v)) - b(\Gbf, \boldsymbol{\phi}, \Upsilon_\alpha(\v)) + b(\boldsymbol{\phi}, \v, \Upsilon_\alpha(\Gbf) ) - b(\v, \boldsymbol{\phi}, \Upsilon_\alpha(\Gbf)) \nonumber \\
        & = (\xi \theta e_2 - \partial_t \Upsilon_\alpha(\Gbf) + \nu\Delta\Gbf - \curl\Upsilon_\alpha(\Gbf)\times\Gbf, \boldsymbol{\phi}),
		\end{align}
		for a.e. $t\in[0,T_{\xi}]$;
        \item  for any $\psi\in H^{1-\xi}(\O)$
        \begin{align}\label{Weak:state:theta}
		& \left\langle\partial_t\theta, \psi\right\rangle_{(H^{1-\xi}(\O))^{\prime}\times H^{1-\xi}(\O)} + \langle\diver[(\v+\Gbf)\theta],\psi\rangle_{(H^{1-\xi}(\O))^{\prime}\times H^{1-\xi}(\O)} =0,
		\end{align}
        for a.e. $t\in[0,T_{\xi}].$
  \end{itemize}
\end{definition}

	\begin{remark}
	All constants entering the active-state estimate are uniform for
	$\g\in\Uad$ by \eqref{eq:uniform-lifting}; consequently, $T_\xi$ is
	independent of the particular admissible control.
\end{remark}
\begin{remark}
	Here and below, $H^\xi(\O)$ means $L^2(\O)$ when $\xi=0$ and $H^1(\O)$ when $\xi=1$.
\end{remark}

\begin{theorem}\label{Wellposednes:state:lift}
  Assume that $T_{\xi}>0$ as in \eqref{eqn:time}. Let $(\theta_0, \u_0)\in H^{\xi}(\Omega)\times \Vb^3(\O)$ such that $\u_0$ satisfies $\eqref{state:eq:state}_{3}$, $\Gbf\in L^2(0,T;H^4(\O))$ and $\partial_t\Gbf\in L^2(0,T;H^3(\O))$ satisfying system \eqref{state:eq:state:lift}, where $\xi=0$ gives the passive scalar and $\xi=1$ gives the two-dimensional Boussinesq-type active scalar.  Then, there exists a unique solution pair $(\theta, \v)$ to system \eqref{state:eq:state:lift} in the sense of Definition \ref{def:WS:state}.   
  
  In addition, assuming \(\theta_0 \in H^1(\O)\) for \(\xi = 0\), the solution pair retains the same regularity as in the case \(\xi = 1\) but globally in time.
\end{theorem}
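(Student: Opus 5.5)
We construct the solution by the Faedo--Galerkin scheme of Subsection~\ref{subsec:galerkin}, derive a priori estimates uniform in $k$, pass to the limit, and prove uniqueness by an $L^2\times\Vb^1$ energy estimate on the difference of two solutions.

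\emph{Energy and $\Vb^1$ estimate.} Test \eqref{eq:velocity-galerkin} with $\v_k$, i.e.\ multiply by $a_i^k$ and sum. By \eqref{trilinear1}, the terms $b(\v_k,\v_k,\Upsilon_\alpha(\v_k))-b(\v_k,\v_k,\Upsilon_\alpha(\v_k))$ cancel, and so do the analogous $\Upsilon_\alpha(\Gbf)$ terms. The only remaining nonlinear term is $(\curl\Upsilon_\alpha(\v_k)\times\Gbf,\v_k)$. The third estimate of Lemma~\ref{lem:curl:estimates} bounds it by $C\|\v_k\|_{H^1}^2\|\Gbf\|_{H^3}$. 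The forcing is bounded by $\|\partial_t\Gbf\|_{H^3}+\|\Gbf\|_{H^4}+\|\Gbf\|_{H^3}^2$, and these are integrable in time by \eqref{eq:uniform-lifting}. The scalar is handled by testing \eqref{eq:scalar-galerkin} with $\theta_k$; the transport term vanishes since $\v_k+\Gbf$ is divergence free and tangent to $\Gamma$, so $\|\theta_k(t)\|_{L^2}=\|P^\theta_k\theta_0\|_{L^2}$. Gronwall then gives a bound on $\v_k$ in $L^\infty(0,T;\Vb^1)$, and on $\partial_t\v_k$ in $L^2(0,T;\Vb^1)$ after testing with $\partial_t\v_k$.

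\emph{$H^3$ estimate.} Apply $\curl$ to the momentum equation in the Galerkin sense, following the Remark above. This gives a transport equation for $\omega_k=\curl\Upsilon_\alpha(\v_k)$:
\[
\partial_t\omega_k+\tfrac{\nu}{\alpha}\omega_k+(\v_k+\Gbf)\cdot\nabla\omega_k=\tfrac{\nu}{\alpha}\curl\v_k+\xi\,\partial_{x_1}\theta_k+F(\Gbf,\v_k),
\]
where $F$ collects lower-order terms, each at most linear in $\omega_k$, with coefficients in $\|\Gbf\|_{H^4}$ and $\|\partial_t\Gbf\|_{H^3}$. Test with $\omega_k$. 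The advection term vanishes, and Lemma~\ref{lem:1.6} converts $\|\omega_k\|_{L^2}+\|\v_k\|_{H^1}$ into $\|\v_k\|_{H^3}$. In the passive case this yields a linear Gronwall inequality and hence a global $L^\infty(0,T;\wiWb)$ bound. For $\xi=1$ we need $\|\nabla\theta_k\|_{L^2}$, and this is the main obstacle. Formally differentiating the transport equation gives
\[
\tfrac{d}{dt}\|\nabla\theta\|_{L^2}^2\le C\|\v+\Gbf\|_{W^{1,\infty}}\|\nabla\theta\|_{L^2}^2,
\]
but the Galerkin projection onto Neumann eigenfunctions does not commute with transport. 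I would therefore first establish the velocity bound as follows: for a given velocity in $L^\infty(0,T;H^3)$, solve the scalar equation exactly by characteristics, do this for smooth $\theta_0$ and conclude by density, as the Remark indicates. The coupled bound then comes from a fixed point (or the combined Galerkin system) using
\[
Y(t)=\|\v\|_{\wiWb}^2+\|\theta\|_{H^1}^2,\qquad Y'\le C(1+\|\Gbf\|_{H^4}^2+\|\partial_t\Gbf\|_{H^3})(1+Y)^{3/2}.
\]
Because of the superlinearity this closes only up to a time $T^*$ depending on $Y(0)$ and $M$, as in \eqref{eqn:time}, which gives \eqref{v:theta:estimate}. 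Since $H^3\hookrightarrow W^{1,\infty}$ in two dimensions, the same $\nabla\theta$ estimate with $\theta_0\in H^1$ is linear in the passive case and gives global $H^1$ regularity, which is the final claim.

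\emph{Limit and uniqueness.} The bounds on $\v_k$ in $L^\infty(\wiWb)$ and on $\partial_t\v_k$ in $L^2(\Vb^1)$ give, by Aubin--Lions, strong convergence in $L^2(H^2)$. Each nonlinear term is a product of a strongly and a weakly convergent factor, so we can pass to the limit in \eqref{Weak:state:v}. Equation \eqref{Weak:state:theta} follows from weak-$*$ limits of $\theta_k$ together with the strong convergence of the velocity, and the time regularity of $\partial_t\theta$ is read off from the equation. For uniqueness, let $(\theta_i,\v_i)$ be two solutions and set $\w=\v_1-\v_2$, $\vartheta=\theta_1-\theta_2$. Test the $\w$-equation with $\w$, using Lemma~\ref{lem:curl:estimates} with the $H^3$ bounds on $\v_i$ and $\Gbf$. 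Pair the $\vartheta$-equation in $L^2$; the cross term $(\w\cdot\nabla\theta_2,\vartheta)$ is bounded by $\|\w\|_{L^\infty}\|\nabla\theta_2\|_{L^2}\|\vartheta\|_{L^2}$ when $\xi=1$. When $\xi=0$ the velocity equation decouples from $\theta$, so $\w=0$ already from the velocity estimate, and then $\vartheta=0$ by linear transport. Gronwall concludes.
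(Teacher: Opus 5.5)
Your existence part follows the paper's route: Galerkin scheme, $\Vb^1$ test, generalized-vorticity test, linear Gronwall for $\xi=0$, and a $3/2$-power inequality with a local time for $\xi=1$. The gap is in your uniqueness argument for $\xi=1$, which does not close as written.

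With only the $L^2\times\Vb^1$ energy of $(\vartheta,\w)$, the coupling term
\[
|(\w\cdot\nabla\theta_2,\vartheta)|\le\|\w\|_{L^\infty(\O)}\|\nabla\theta_2\|_{L^2(\O)}\|\vartheta\|_{L^2(\O)}
\]
cannot be absorbed. In two dimensions $H^1(\O)$ does not embed in $L^\infty(\O)$, so $\|\w\|_{L^\infty}$ is not controlled by $\|\w\|_{\Vb^1}$. If you instead bound $\|\w\|_{L^\infty}$ using the a priori $H^3$ bounds on $\v_1,\v_2$, you only get $\frac{d}{dt}\|\vartheta\|_{L^2}^2\le C\|\vartheta\|_{L^2}$, which does not force $\vartheta\equiv0$. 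So ``Gronwall concludes'' is unjustified, and one more ingredient is needed.

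The simplest fix is an $H^2$-level estimate for the difference. Since $\w\in L^\infty(0,T_\xi;\wiWb)$, you may test the $\w$-equation with $\boldsymbol m:=\mathbb P\Upsilon_\alpha(\w)\in\Vb^1(\O)$. Use $\curl\Upsilon_\alpha(\w)=\curl\boldsymbol m$, the identity $(\curl\boldsymbol m\times(\v_1+\Gbf),\boldsymbol m)=b(\boldsymbol m,\v_1+\Gbf,\boldsymbol m)$, and Lemma~\ref{lem:1.6}. This gives
\[
\frac{d}{dt}\|\boldsymbol m\|_{L^2}^2\le C(t)\bigl(\|\boldsymbol m\|_{L^2}^2+\|\w\|_{\Vb^1}^2+\|\vartheta\|_{L^2}^2\bigr),\qquad C\in L^1(0,T_\xi).
\]
Then $\|\w\|_{L^\infty}\le C(\|\boldsymbol m\|_{L^2}+\|\w\|_{H^1})$, and Gronwall for $\|\w\|_{\Vb^1}^2+\|\boldsymbol m\|_{L^2}^2+\|\vartheta\|_{L^2}^2$ becomes linear.

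The paper takes a different route. It keeps the energy $\mathbf U=\|\w\|_{\Vb^1}^2+\|\vartheta\|_{L^2}^2$ and uses $\|f\|_{L^\infty}\le C\eps^{-1/2}\|f\|_{H^1}^{1-\eps}\|f\|_{H^2}^{\eps}$ together with the a priori bound on $\|\w\|_{H^2}$. This yields $\mathbf U'\le K\eps^{-1/2}(\mathbf U+\mathbf U^{1-\eps/2})$, and letting $\eps\to0$ gives $\mathbf U\equiv0$.

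Two smaller remarks on the existence part.

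\emph{The characteristics/fixed-point detour for $\nabla\theta$ is unnecessary.} Since $(I-\Delta)\theta_k\in X_k$ and $\partial_{\n}\theta_k=0$, testing \eqref{eq:scalar-galerkin} with $(I-\Delta)\theta_k$ gives $\frac{d}{dt}\|\theta_k\|_{H^1}^2\le2\|\nabla(\v_k+\Gbf)\|_{L^\infty}\|\nabla\theta_k\|_{L^2}^2$ directly at the Galerkin level.

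\emph{Your inequality does not give a $T^*$ depending only on $Y(0)$ and $M$.} You place $\|\Gbf\|_{H^4}^2$ in front of $(1+Y)^{3/2}$. Then the blow-up time of the comparison ODE depends on how $\int_0^t\|\g\|_{H^{5/2}(\Gamma)}^2\,ds$ is distributed in time, which $\|\g\|_{\Ucal}\le M$ does not control. Keep the $\Gbf$-terms linear in $Y$ instead: the only superlinear contribution is $\|\v\|_{H^3}\|\nabla\theta\|_{L^2}^2\le CY^{3/2}$, with a constant coefficient. From $Y'\le a(t)(1+Y)+C(1+Y)^{3/2}$ with $\|a\|_{L^1(0,T)}\le C(1+M^2)$, the substitution $z=e^{-\int_0^t a}(1+Y)$ gives $z'\le Ce^{\|a\|_{L^1}/2}z^{3/2}$, hence a uniform $T^*$.

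Also, the bound on $\partial_t\v_k$ must come after the $H^2$ bound, because of the term $b(\partial_t\v_k,\v_k,\Upsilon_\alpha(\v_k))$.
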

\begin{proof}[Proof of Theorem \ref{Wellposednes:state:lift}]
	We use the Galerkin system \eqref{eq:scalar-galerkin}-\eqref{eq:velocity-galerkin} constructed above. The following estimates are first obtained for $(\theta_k,\v_k)$; their constants are independent of $k$. Standard ODE theory provides a local approximate solution, and the uniform bounds extend it to $[0,T_\xi]$.
	
    \vskip 2mm
    \noindent
    \textbf{Transport equation $\eqref{state:eq:state:lift}_3$:} 
    \smallskip
    \noindent
    \textit{For $\theta_0\in L^2(\O)$.} Taking inner product with $\theta$ in $\eqref{state:eq:state:lift}_3$, integrating by parts and using the conditions $\diver(\v+\Gbf)=0$ and $(\v+\Gbf)\cdot\n=0$, we get
    \begin{align}
        \frac{1}{2}\frac{d}{dt}\|\theta(t)\|_{L^2(\O)}^2 =0 
    \end{align}
    which implies
    \begin{align}
        \|\theta\|_{L^{\infty}(0,T;L^2(\O))} = \|\theta_0\|_{L^2(\O)}<\infty.
    \end{align}
    
    \smallskip
    \noindent
    \textit{For $\theta_0\in H^1(\O)$.} Applying $\nabla$ to the equation $\eqref{state:eq:state:lift}_3$, taking the inner product with $\nabla\theta$ and using the conditions $\diver(\v+\Gbf)=0$ and $(\v+\Gbf)\cdot\n=0$ gives
    \begin{align}\label{theta:H1:estimate}
    \frac{1}{2}\frac{d}{dt}\|\nabla\theta(t)\|_{L^2(\O)}^2 \leq \|\nabla(\v+\Gbf)\|_{L^{\infty}(\O)}  \|\nabla\theta(t)\|_{L^2(\O)}^2.
    \end{align}
    \vskip 2mm
    \noindent
    \textbf{Second grade fluid equation $\eqref{state:eq:state:lift}_1$:} Taking the inner product of $\eqref{state:eq:state:lift}_1$ with $\v$, integrating by parts, and utilizing $\eqref{trilinear1}_2$, we obtain 
    \begin{align}\label{EEv:1}
     \frac{1}{2}\frac{d}{dt}\|\v\|^2_{\Vb^1(\O)} 
     & 
    = -2\nu \|\Db(\v)\|^2_{L^2(\O)} - \nu\beta\|\v\|^2_{L^2(\Gamma)}  + \underbrace{(\curl\Upsilon_\alpha(\v) \times \v, \v)}_{=0, \text{ by } \eqref{trilinear1}_2} + (\curl\Upsilon_\alpha(\v) \times\Gbf, \v)  
    \nonumber\\ 
    & \quad + \underbrace{(\curl\Upsilon_\alpha(\Gbf) \times\v, \v)}_{=0, \text{ by } \eqref{trilinear1}_2} - \underbrace{(\curl\Upsilon_\alpha(\Gbf) \times\Gbf, \v)}_{=b(\v, \Gbf, \Upsilon_\alpha(\Gbf) ) - b(\Gbf, \v, \Upsilon_\alpha(\Gbf)).}
    + (\xi \theta e_2 - \partial_t \Upsilon_\alpha(\Gbf) + \nu\Delta\Gbf, \v).
    \end{align}
    Let us now estimate each term on the right hand side of \eqref{EEv:1} using $\eqref{curl:estimates}_3$, \eqref{Korn:equivalent}, H\"older's and Young's inequalities as follows:
    \begin{align}
    |(\curl\Upsilon_\alpha(\v) \times\Gbf, \v)| & \leq C \|\v\|_{H^1(\O)}^2\|\Gbf\|_{H^3(\O)}\leq C \|\v\|_{\Vb^1(\O)}^2[1+\|\Gbf\|_{H^3(\O)}^2], \label{EEv:2}\\
        |b(\v, \Gbf, \Upsilon_\alpha(\Gbf) ) - b(\Gbf, \v, \Upsilon_\alpha(\Gbf))| & \leq [\|\v\|_{L^4(\O)}\|\nabla\Gbf\|_{L^4(\O)} + \|\Gbf\|_{L^{\infty}(\O)} \|\nabla\v\|_{L^2(\O)} ] \|\Upsilon_\alpha(\Gbf)\|_{L^2(\O)}
        \nonumber\\ & \leq C \|\v\|_{\Vb^1(\O)}\|\Gbf\|_{H^2(\O)}^2 
        \nonumber\\ & \leq C \|\v\|^2_{\Vb^1(\O)}\|\Gbf\|_{H^2(\O)}^2 + C \|\Gbf\|_{H^2(\O)}^2, \label{EEv:3}\\
        |(\xi \theta e_2 - \partial_t \Upsilon_\alpha(\Gbf) + \nu\Delta\Gbf, \v)|& \leq  \xi \|\theta\|_{L^2(\O)}^2 +  \|\partial_t \Gbf\|_{H^2(\O)}^2 + \|\Gbf\|_{H^2(\O)}^2 + C \|\v\|^2_{\Vb^1(\O)}.\label{EEv:4}
    \end{align}
   Combining \eqref{EEv:1}-\eqref{EEv:4}, we obtain
\begin{align}\label{H1:estimate}
    & \frac{1}{2}\frac{d}{dt}\|\v(t)\|^2_{\Vb^1(\O)} + 2\nu \|\Db(\v(t))\|^2_{L^2(\O)} + \nu\beta\|\v(t)\|^2_{L^2(\Gamma)}
    \nonumber\\ & \leq \xi \|\theta(t)\|_{L^2(\O)}^2 +  \|\partial_t \Gbf(t)\|_{H^2(\O)}^2 + \|\Gbf(t)\|_{H^2(\O)}^2 + C [1+\|\Gbf(t)\|_{H^3(\O)}^2] \|\v(t)\|^2_{\Vb^1(\O)}.
\end{align}
 for a.e. $t\in[0,T]$.
 
 Next, applying    `$\curl$' to the equation $\eqref{state:eq:state:lift}_1$, using the vector identity 
 \begin{align}\label{identity:curl:curl}
     \curl(\curl(\z_1)\times\z_2)= \z_2\cdot \nabla(\curl\z_1)
 \end{align}
 and taking the inner product with $\curl(\v-\alpha\Delta\v)$, we achieve
\begin{align}\label{H3:estimate:0}
    & \frac{1}{2}\frac{d}{dt}\|\curl\Upsilon_\alpha(\v)\|^2_{L^2(\O)}  + \frac{\nu}{\alpha}\|\curl\Upsilon_\alpha(\v)\|^2_{L^2(\O)}  
    \nonumber\\
    & = \frac{\nu}{\alpha}\left(\curl\v, \curl\Upsilon_\alpha(\v)\right) - \underbrace{(\v\cdot\nabla(\curl\Upsilon_\alpha(\v)), \curl\Upsilon_\alpha(\v)) }_{=0, \text{ using $\v\cdot\n|_{\Gamma}=0$ and $\diver\v=0$}.}
     - \underbrace{(\Gbf\cdot\nabla(\curl\Upsilon_\alpha(\v)), \curl\Upsilon_\alpha(\v))}_{=0, \text{ using $\Gbf\cdot\n|_{\Gamma}=0$ and $\diver\Gbf=0$}.}
     \nonumber\\ 
     & \quad  - (\v\cdot\nabla(\curl\Upsilon_\alpha(\Gbf)), \curl\Upsilon_\alpha(\v)) 
    - (\Gbf\cdot\nabla(\curl\Upsilon_\alpha(\Gbf)), \curl\Upsilon_\alpha(\v)) 
    \nonumber\\ 
    & \quad  + (\curl[\xi \theta e_2 - \partial_t \Upsilon_\alpha(\Gbf) + \nu\Delta\Gbf] , \curl\Upsilon_\alpha(\v))
    \nonumber\\
    & \leq C [\|\v\|_{\Vb^1(\O)}  +  \|\v\|_{L^{\infty}(\O)}\|\Gbf\|_{H^4(\O)} + \|\Gbf\|^2_{H^4(\O)}  + \|\partial_t\Gbf\|_{H^3(\O)} + \|\Gbf\|_{H^3(\O)} ]  \|\curl\Upsilon_\alpha(\v)\|_{L^2(\O)}
    \nonumber\\ & \quad + \xi \|\nabla\theta\|_{L^2(\O)}\|\curl\Upsilon_\alpha(\v)\|_{L^2(\O)}
    \nonumber\\ & \leq \xi \|\nabla\theta\|_{L^2(\O)}^2 + C [\|\v\|^2_{\Vb^1(\O)}  +  \|\v\|_{H^{3}(\O)}^2 + \|\Gbf\|^2_{H^4(\O)}  + \|\partial_t\Gbf\|^2_{H^3(\O)}  ]
    \nonumber\\ & \quad + C [1+ \|\Gbf\|^2_{H^4(\O)}] \|\curl\Upsilon_\alpha(\v)\|_{L^2(\O)}^2,
\end{align}
where we have used Sobolev, H\"older's and Young's inequalities in the last two steps. Now, in view of \eqref{wiWb:inner:product} and the equivalence between the norms $\|\cdot\|_{H^3(\O)}$ and $\|\cdot\|_{\wiWb}$, combining \eqref{H1:estimate} and \eqref{H3:estimate:0} yields
\begin{align}\label{H3:estimate}
     \frac{1}{2}\frac{d}{dt}\|\v\|^2_{\wiWb}  
     & \leq   C [ \|\Gbf\|^2_{H^4(\O)}   + \|\partial_t\Gbf\|^2_{H^3(\O)} ] 
     + [1+ \|\Gbf\|^2_{H^4(\O)}] \|\v\|_{\wiWb}^2
       + \xi[\|\nabla\theta\|_{L^2(\O)}^2 + \|\theta_0\|_{L^2(\O)}^2].
\end{align}
\vskip 2mm
\noindent
\textit{\underline{Case I: $\xi=0$}}. We apply the Gronwall Lemma to \eqref{H3:estimate} (for $\xi=0$) to arrive at
\begin{align}\label{v:H3:estimate}
     \|\v(t)\|^2_{\wiWb} 
    & \leq \left[\|\v_0\|^2_{\wiWb} +  C \int_{0}^T[\|\Gbf(s)\|^2_{H^4(\O)}   + \|\partial_t\Gbf(s)\|^2_{H^3(\O)}]ds \right]  e^{C \int_{0}^T[1 + \|\Gbf(s)\|^2_{H^4(\O)}]ds}
    \nonumber\\
    & \leq \left[\|\u_0\|^2_{H^3(\O)} +\|\g(0)\|^2_{H^{\frac32}(\Gamma)} +  C \int_{0}^T[\|\g(s)\|_{H^{\frac52}(\Gamma)}^2   + \|\partial_t\g(s)\|_{H^{\frac32}(\Gamma)}^2]ds \right] 
    \nonumber\\
    & \quad \times \exp\left\{C \int_{0}^T[1 + \|\g(s)\|_{H^{\frac52}(\Gamma)}^2]ds\right\}
    \nonumber\\ & < +\infty.
\end{align}
It is clear that for $\xi = 0$, no regularity assumptions on $\theta$ are required. However, for the case $\xi = 1$, the situation is entirely different, as it inherently requires $\theta_0 \in H^1(\Omega)$ (see \textbf{Case II} below). 

Now suppose, for $\xi=0$ and $\theta_0\in H^1(\O)$, then applying Gronwall's inequality, Sobolev inequality and \eqref{H3:estimate} to \eqref{theta:H1:estimate} obtain
\begin{align}\label{theeta:H1:estimate}
  \sup_{t\in[0,T]}  \|\nabla\theta(t)\|^2_{L^2(\O)}&  \leq \|\nabla\theta_0\|_{L^2(\O)}e^{\int_{0}^T\|\nabla(\v(t)+\Gbf(t))\|_{\infty} dt }
  \nonumber\\
  & \leq \|\nabla\theta_0\|_{L^2(\O)}e^{\int_{0}^T\|\v(t)+\Gbf(t))\|_{H^3(\O)} dt }
  \nonumber\\
  & \leq \|\nabla\theta_0\|_{L^2(\O)}\exp\left\{\int_{0}^T\|\v(t)\|_{H^3(\O)}  dt + \int_{0}^T\|\g(t)\|_{H^{\frac32}(\Gamma)} dt \right\} < + \infty.
\end{align}

\vskip 2mm
\noindent
\textit{\underline{Case II: $\xi=1$}}. Let us now prove that when $\xi = 1$ and $(\theta_0, \v_0) \in H^{1}(\Omega) \times \wiWb$, there exists a time $T^*>0$ such that the solution pair satisfies the regularity $(\theta, \v) \in L^{\infty}(0,T^*; H^{1}(\Omega)) \times L^{\infty}(0,T^*; \wiWb)$. 

A combination of \eqref{theta:H1:estimate} and \eqref{H3:estimate} gives
\begin{align}\label{H1+H3:estimate}
     \frac{1}{2}\frac{d}{dt}\left[\|\v\|^2_{\wiWb}+\|\nabla\theta\|_{L^2(\O)}^2\right]  
     & \leq   C [ \|\Gbf\|^2_{H^4(\O)}   + \|\partial_t\Gbf\|^2_{H^3(\O)} ] 
     + [1+ \|\Gbf\|^2_{H^4(\O)}] \|\v\|_{H^{3}(\O)}^2
     \nonumber\\ & \quad + [\|\nabla\theta\|_{L^2(\O)}^2 + \|\theta_0\|_{L^2(\O)}^2] + \|\nabla(\v+\Gbf)\|_{L^{\infty}(\O)}  \|\nabla\theta\|_{L^2(\O)}^2
     \nonumber\\ 
     &  \leq C[\|\theta_0\|_{L^2(\O)}^2 + \|\Gbf\|^2_{H^4(\O)}   + \|\partial_t\Gbf\|^2_{H^3(\O)} ] 
     \nonumber\\ & \quad + C[1+ \|\Gbf\|^2_{H^4(\O)}] [\|\nabla\theta\|_{L^2(\O)}^2 + \|\v\|_{H^{3}(\O)}^2]
      +  C \|\v\|_{H^{3}(\O)}  \|\nabla\theta\|_{L^2(\O)}^2
      \nonumber\\ 
     &  \leq C[\|\theta_0\|_{L^2(\O)}^2 + \|\Gbf\|^2_{H^4(\O)}   + \|\partial_t\Gbf\|^2_{H^3(\O)} ] 
     \nonumber\\ & \quad + C[1+ \|\Gbf\|^2_{H^4(\O)}] [ \|\v\|_{\wiWb}^2 + \|\nabla\theta\|_{L^2(\O)}^2 ]^{\frac32}. 
\end{align}
This implies that there exists a time $T^*>0$ depending on $\|\theta_0\|_{H^1(\O)}$, $\|\v_0\|_{\wiWb}$ and $M$ such that 
\begin{align}\label{v:theta:estimate}
\sup_{t\in[0,T^*]}\left[\|\v(t)\|^2_{\wiWb}+\|\nabla\theta(t)\|_{L^2(\O)}^2\right] & \leq 4 (1+ \|\v_0\|^2_{\wiWb}+\|\nabla\theta_0\|_{L^2(\O)}^2)
    \nonumber\\
    & \leq 4 (1+ \|\u_0\|^2_{\wiWb} +\|\g(0)\|^2_{H^{\frac32}(\Gamma)}+\|\nabla\theta_0\|_{L^2(\O)}^2).
\end{align}
 This concludes the proof of the regularity of the pair $(\theta,\v)$. 
\vskip 2mm
\noindent
\textbf{Weak time derivative estimates and initial data:} For $T_{\xi}>0$ as in \eqref{eqn:time}, we obtained that $(\theta, \v) \in L^{\infty}(0,T_{\xi}; H^{\xi}) \times L^{\infty}(0,T_{\xi}; \wiWb)$. Taking the inner product of equation \eqref{state:eq:state:lift} with $\partial_t\v$ and estimating the resulting terms in the same manner as in the $H^3$-estimate, we obtain $\partial_t\v \in L^{2}(0,T_{\xi};\Vb^1(\O)).$

For the transport equation, we consider
\begin{align}\label{theta:derivative}
   \sup_{\psi\in H^{1-\xi}(\O)} |\left(\partial_t\theta, \psi\right)_{(H^{1-\xi}(\O))^{\prime}\times H^{1-\xi}(\O)}| & = \sup_{\psi\in H^{1-\xi}(\O)} |(\diver[(\v+\Gbf)\theta], \psi)|
   \nonumber\\ 
   & \leq \|\diver [(\v+\Gbf)\theta]\|_{(H^{1-\xi}(\O))^{\prime}}
   \nonumber\\
   & \leq \begin{cases}
       \|(\v+\Gbf)\theta\|_{L^2(\O)}, & \text{ for } \xi=0\\
       \|(\v+\Gbf)\nabla\theta\|_{L^{2}(\O)}, & \text{ for } \xi=1.
   \end{cases}
   \nonumber\\
   & \leq \begin{cases}
       \|\v+\Gbf\|_{L^\infty(\O)}\|\theta\|_{L^2(\O)}, & \text{ for } \xi=0\\
       \|\v+\Gbf\|_{L^\infty(\O)}\|\nabla\theta\|_{L^{2}(\O)}, & \text{ for } \xi=1,
   \end{cases}
\end{align}
which gives that $\partial_t\theta \in L^{\infty}(0,T_{\xi};(H^{1-\xi}(\O))^{\prime})$. Recall that for $\xi=0$ and $\theta_0\in H^1(\O)$, it holds that $(\theta, \v) \in L^{\infty}(0,T; H^{1}) \times L^{\infty}(0,T; \wiWb)$. Consequently from \eqref{theta:derivative}, we obtain $\partial_t\theta \in L^{\infty}(0,T;L^{2}(\O))$. Using the weak time derivative estimates, one can verify that $(\theta(0),\v(0))=(\theta_0,\u_0 - \Gbf(0))$ in $H^{\xi}(\Omega)\times \wiWb$. 

The above estimates for $(\theta,\v)$ and $(\partial_t\theta,\partial_t\v)$ allow us to pass to the limit in the finite-dimensional Galerkin approximation scheme; for example see the proof of Theorem \ref{existence:control} below. This completes the proof of the existence of a solution pair $(\theta,\v)$ to the system \eqref{state:eq:state:lift}.

\vskip 2mm
\noindent
\textbf{Uniqueness:} Let $(\theta_1, \v_1), (\theta_2, \v_2) \in L^{\infty}(0,T_{\xi}; H^{\xi}(\O)) \times L^{\infty}(0,T_{\xi}; \wiWb)$ be two solutions of system \eqref{state:eq:state:lift}. Therefore, the pair  $(\tilde{\theta}:=\theta_1-\theta_2, \tilde{\v}:=\v_1- \v_2)$ satisfies:
\begin{equation}\label{state:eq:state:lift:difference}
\left\{
\begin{aligned}
	 \partial_t\tilde{\theta}+ \tilde{\v} \cdot\nabla\theta_1 + (\v_2+\Gbf) \cdot\nabla\tilde{\theta}&=0,
	&& \hspace{-15mm}\text{in }(0,T_\xi)\times\O,\\
   \partial_t\Upsilon_\alpha(\tilde{\v})  - \nu\Delta\tilde{\v}
  +\curl\Upsilon_\alpha(\tilde{\v}) \times\v_1 & + \curl\Upsilon_\alpha(\v_2) \times\tilde{\v}  + \curl\Upsilon_\alpha(\tilde{\v}) \times\Gbf \\ + \curl\Upsilon_\alpha(\Gbf) \times\tilde{\v} 
 & = -\nabla {(q_1-q_2)}
 + \xi \tilde{\theta} e_2 ,
 && \hspace{-15mm} \text{in }(0,T_\xi)\times\O,\\
 \diver\tilde{\v} &=0,
 && \hspace{-15mm}\text{in }(0,T_\xi)\times\O,\\
 \tilde{\theta}(0)  = 0, \;\; \tilde{\v}(0) & = \boldsymbol{0}, && \hspace{-15mm}\text{in }\O,\\
     \v\cdot\n   =0 \;\;\; \text{ and } \;\;\;  [2\nu \n \cdot \Db(\v)  & + \beta \v]\cdot\tau   = 0, && \hspace{-15mm}\text{on } (0,T_\xi)\times \Gamma,
\end{aligned}
\right.
\end{equation}
where $q_1$ and $q_2$ are the pressure scalar corresponding to  $(\theta_1, \v_1) $ and $ (\theta_2, \v_2)$, respectively. 
\vskip 2mm
\noindent
\textit{\underline{Case I: $\xi=0$}}.
Taking the inner product with $\tilde{\v}$ to the equation $\eqref{state:eq:state:lift:difference}_1$, we write
\begin{align}\label{Uni:1}
   & \frac{1}{2}\frac{d}{dt}\|\tilde{\v}\|^2_{\Vb^1(\O)}  + 2\nu \|\Db(\tilde{\v})\|^2_{L^2(\O)} + \nu\beta\|\tilde{\v}\|^2_{L^2(\Gamma)}
   \nonumber\\ 
   & =  -  (\curl\Upsilon_\alpha(\tilde{\v}) \times(\v_1+\Gbf), \tilde{\v}) 
      - \underbrace{(\curl\Upsilon_\alpha(\v_2+\Gbf) \times\tilde{\v}, \tilde{\v})}_{=0, \text{ by } \eqref{trilinear1}_2.}  + (\xi \tilde{\theta} e_2, \tilde{\v})
    \nonumber\\
    & \underbrace{\leq}_{\eqref{curl:estimates}_3} C \|\v_2+\Gbf\|_{H^3(\O)}\|\tilde{\v}\|^2_{\Vb^1(\O)}.
\end{align}
An application of the Gronwall lemma to \eqref{Uni:1} and the fact that $\v_2+\Gbf\in L^{\infty}(0,T_{\xi};\wiWb)$ provide that $\v_1=\v_2$. Taking the inner product with $\tilde{\theta}$ to the equation $\eqref{state:eq:state:lift:difference}_3$, and using $\v_1=\v_2$, $\diver(\v_2+\Gbf)=0$ and $(\v_2+\Gbf)\cdot\n=0$, we have
\begin{align}
        \frac{1}{2}\frac{d}{dt}\|\tilde{\theta}\|_{L^2(\O)}^2 &  = - \int_{\O} [\tilde{\v} \cdot\nabla\theta_1 + (\v_2+\Gbf) \cdot\nabla\tilde{\theta}]\tilde{\theta} dx =0,
    \end{align}
    which immediately gives that $\theta_1=\theta_2$.
    
    \vskip 2mm
\noindent
\textit{\underline{Case I: $\xi=1$}}.  A calculation similar to \eqref{Uni:1} gives
\begin{align}\label{Uni:2}
    \frac{1}{2}\frac{d}{dt}\|\tilde{\v}\|^2_{\Vb^1(\O)} &
    \leq C [1+\|\v_2+\Gbf\|_{H^3(\O)}] \|\tilde{\v}\|^2_{\Vb^1(\O)} +  \|\tilde{\theta}\|^2_{L^2(\O)}.
\end{align}

Let us recall from \cite[Lemma 4]{BusuiocIftimie2006} that the following inequality holds:
\begin{align}\label{infty:inequality}
    \|f\|_{L^{\infty}(\O)} \leq \frac{C}{\sqrt{\eps}} \|f\|_{H^1(\O)}^{1-\eps}\|f\|^{\eps}_{H^2(\O)}, \;\;\; \text{ for all } \eps\in(0,1],
\end{align}
for any function $f\in H^2(\O)$. Taking the inner product with $\tilde{\theta}$ to the equation $\eqref{state:eq:state:lift:difference}_3$, and using $\diver(\v_2+\Gbf)=0$ and $(\v_2+\Gbf)\cdot\n=0$, we get
\begin{align}\label{Uni:3}
        \frac{1}{2}\frac{d}{dt}\|\tilde{\theta}\|_{L^2(\O)}^2 &  = - \int_{\O} (\tilde{\v} \cdot\nabla\theta_1 )\tilde{\theta} dx  
        \nonumber\\
        & \leq \|\tilde{\v}\|_{L^{\infty}(\O)}  \|\nabla\theta_1\|_{L^2(\O)}\|\tilde{\theta}\|_{L^2(\O)}
        \nonumber\\
        & \leq \frac{C}{\sqrt{\eps}} \|\tilde{\v}\|^{1-\eps}_{\Vb^{1}(\O)}  \|\tilde{\v}\|^{\eps}_{H^{2}(\O)}\|\nabla\theta_1\|_{L^2(\O)}\|\tilde{\theta}\|_{L^2(\O)},
\end{align}
where we have used \eqref{infty:inequality} in the last inequality.

Let us denote $\mathbf{U}(t):=\|\tilde{\v}(t)\|^2_{\Vb^1(\O)}+\|\tilde{\theta}(t)\|_{L^2(\O)}^2$. Now, since $(\theta_1, \v_1), (\theta_2, \v_2) \in L^{\infty}(0,T_{\xi}; H^{\xi}(\O)) \times L^{\infty}(0,T_{\xi}; \wiWb)$, there exists a constant $\widebar{K}>0$ (independent of $\eps$) such that by adding \eqref{Uni:2} and \eqref{Uni:3}, we obtain
\begin{align}\label{}
    \frac{d}{dt}\mathbf{U}(t) 
    \leq  \frac{\widebar{K}}{\sqrt{\eps}} [\mathbf{U}(t) +  \mathbf{U}(t)^{1-\frac{\eps}{2}}].
\end{align}
This implies
\begin{align}\label{}
    \frac{d}{dt}[\mathbf{U}(t)]^{\frac{\eps}{2}} &
    \leq  \frac{\widebar{K}\sqrt{\eps}}{2} [\mathbf{U}(t)]^{\frac{\eps}{2}} + \frac{\widebar{K}\sqrt{\eps}}{2},
\end{align}
and consequently 
\begin{align*}
    \mathbf{U}(t) & \leq (e^{\frac{\widebar{K}\sqrt{\eps}}{2}t} -1)^{\frac{2}{\eps}} \leq (e^{\frac{\widebar{K}\sqrt{\eps}}{2}T_{\xi}} -1)^{\frac{2}{\eps}},
\end{align*}
for all $t\in[0,T_{\xi}]$.
If $\mathrm{\mathbf{U}}(t)\geq \sigma$ for some $\sigma>0$. Then, one can always find $\eps\in (0,1)$ close to $0$ such that
\begin{align}
	\mathrm{\mathbf{U}}(t)  \leq \frac{\sigma}{2}, \;\; \text{ for all } t\in [0,{T_{\xi}}],
\end{align}
which leads to a contradiction.
Hence $\mathrm{\textbf{U}}(t)=0$ for all $t\in[0,T_{\xi}]$, which gives the uniqueness. This completes the proof of the theorem.
\end{proof}

By Theorem \ref{Wellposednes:state:lift}, we immediately have the following result.
\begin{theorem}\label{Wellposednes:state}
    Assume that $T_{\xi}>0$ as in \eqref{eqn:time}. Let $(\theta_0, \u_0)\in H^{\xi}(\Omega)\times \Vb^3(\O)$ such that $\u_0$ satisfies $\eqref{state:eq:state}_{3}$, and $\g\in\Uad$, where $\xi=0$ gives the passive scalar and $\xi=1$ gives the two-dimensional Boussinesq-type active scalar. Then, there exists a unique solution pair  $(\theta, \u) \in L^{\infty}(0,T_{\xi}; H^{\xi}(\O)) \times L^{\infty}(0,T_{\xi}; H^3(\O))$  with $(\partial_t\theta, \partial_t \u) \in L^{\infty}(0,T_{\xi};(H^{1-\xi}(\O))^{\prime}) \times L^{2}(0,T_{\xi};\Vb^1(\O)),$ which satisfies the system \eqref{state:eq:state} in the following sense:
  \begin{itemize}
      \item for any $\boldsymbol{\phi}\in \Vb^1(\O),$ 
		\begin{align}\label{Weak:state:u}
		& \left(\partial_t\u, \boldsymbol{\phi}\right)_{\Vb^1(\O)} + 2\nu(\Db(\u), \Db(\boldsymbol{\phi}))  + \beta \int_{\Gamma}(\u\cdot\tau)(\boldsymbol{\phi}\cdot\tau) dS + b(\boldsymbol{\phi}, \u, \Upsilon_\alpha(\u)) - b(\u, \boldsymbol{\phi}, \Upsilon_\alpha(\u)) 
        \nonumber \\
        & = (\xi \theta e_2, \boldsymbol{\phi}) +  \int_{\Gamma}(\g\cdot\tau)(\boldsymbol{\phi}\cdot\tau)dS +  \frac{\alpha}{\nu}\int_{\Gamma}(\partial_t\g\cdot\tau)(\boldsymbol{\phi}\cdot\tau)dS,
		\end{align}
		for a.e. $t\in[0,T_{\xi}]$;
        \item  for any $\psi\in H^{1-\xi}(\O)$
        \begin{align}\label{Weak:state:theta:u}
		& \left\langle\partial_t\theta, \psi\right\rangle_{(H^{1-\xi}(\O))^{\prime}\times H^{1-\xi}(\O)} + \langle\diver[
        \u\theta],\psi\rangle_{(H^{1-\xi}(\O))^{\prime}\times H^{1-\xi}(\O)} =0,
		\end{align}
        for a.e. $t\in[0,T_{\xi}].$
        \item  the initial condition is satisfied: $$(\theta(0),\u(0))=(\theta_0,\u_0).$$ 
  \end{itemize}
  In addition, assuming \(\theta_0 \in H^1(\O)\) for \(\xi = 0\), the solution pair retains the same regularity as in the case \(\xi = 1\) but globally in time.
\end{theorem}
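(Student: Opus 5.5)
The plan is to deduce Theorem \ref{Wellposednes:state} directly from Theorem \ref{Wellposednes:state:lift} by undoing the lifting.

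\textbf{Construction of $(\theta,\u)$.} Given $\g\in\Uad$, let $\Gbf=\mathcal R\g$. By Lemma \ref{Lifting:estimates} and \eqref{eq:uniform-lifting}, $\Gbf\in L^2(0,T;H^4(\O))$ and $\partial_t\Gbf\in L^2(0,T;H^3(\O))$, with bounds depending only on $M$. Compatibility of $\g(0)=\g_{\rm in}$ gives $\v_0=\u_0-\mathcal R\g_{\rm in}\in\wiWb$. Theorem \ref{Wellposednes:state:lift} then produces a unique $(\theta,\v)$ on $[0,T_\xi]$, and I set $\u:=\v+\Gbf$. The regularity claims follow from $\Gbf\in H^1(0,T;H^3(\O))\hookrightarrow C([0,T];H^3(\O))$, which gives $\u\in L^\infty(0,T_\xi;H^3(\O))$ and $\partial_t\u=\partial_t\v+\partial_t\Gbf\in L^2(0,T_\xi;\Vb^1(\O))$. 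Here $\partial_t\Gbf$ is divergence free with zero normal trace, since $\mathcal R$ maps into such fields and commutes with $\partial_t$. The initial condition holds because $\u(0)=\v_0+\Gbf(0)=\u_0$. The transport identity \eqref{Weak:state:theta:u} is exactly \eqref{Weak:state:theta} with $\u=\v+\Gbf$. The case $\xi=0$ with $\theta_0\in H^1(\O)$ is inherited from the last statement of Theorem \ref{Wellposednes:state:lift}.

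\textbf{Weak momentum identity.} This step is the main computation. Add to \eqref{Weak:state:v} the identity satisfied by $\Gbf$, then reorganize.
\begin{itemize}
\item The trilinear terms recombine. Expanding $b(\boldsymbol\phi,\u,\Upsilon_\alpha(\u))-b(\u,\boldsymbol\phi,\Upsilon_\alpha(\u))$ bilinearly in $\u=\v+\Gbf$ yields the six terms in \eqref{Weak:state:v}, plus the pure $\Gbf$ term. By $\eqref{trilinear1}_2$, that pure term equals $(\curl\Upsilon_\alpha(\Gbf)\times\Gbf,\boldsymbol\phi)$, and it cancels against the right-hand side of \eqref{Weak:state:v}.
\item The linear $\Gbf$ terms are handled by integration by parts. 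For $\boldsymbol\phi\in\Vb^1(\O)$ I would use the Green formula
\[
-\nu(\Delta\Gbf,\boldsymbol\phi)=2\nu(\Db(\Gbf),\Db(\boldsymbol\phi))-2\nu\int_\Gamma (\n\cdot\Db(\Gbf))\cdot\boldsymbol\phi\,dS ,
\]
which relies on $\diver\Gbf=0$. Since $\boldsymbol\phi\cdot\n=0$, only the tangential part survives, and the boundary condition in \eqref{Stokes} turns it into $\int_\Gamma(\g\cdot\tau)(\boldsymbol\phi\cdot\tau)\,dS-\beta\int_\Gamma(\Gbf\cdot\tau)(\boldsymbol\phi\cdot\tau)\,dS$. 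The friction term $-\beta\int_\Gamma(\Gbf\cdot\tau)(\boldsymbol\phi\cdot\tau)\,dS$ is then absorbed into $\beta\int_\Gamma(\u\cdot\tau)(\boldsymbol\phi\cdot\tau)\,dS$.
\item The same Green formula applied to $\partial_t\Gbf$, whose tangential traction is $\partial_t\g\cdot\tau$, gives
\[
(\partial_t\Upsilon_\alpha(\Gbf),\boldsymbol\phi)=(\partial_t\Gbf,\boldsymbol\phi)_{\Vb^1(\O)}-\tfrac{\alpha}{\nu}\int_\Gamma(\partial_t\g\cdot\tau)(\boldsymbol\phi\cdot\tau)\,dS .
\]
Here the $\beta$-weighted boundary term in \eqref{Vb1} is matched exactly by the friction part of the traction. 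Combined with $(\partial_t\v,\boldsymbol\phi)_{\Vb^1(\O)}$, this gives $(\partial_t\u,\boldsymbol\phi)_{\Vb^1(\O)}$ and the $\frac{\alpha}{\nu}\partial_t\g$ boundary term.
\item There is a subtlety: pressure terms vanish against divergence-free $\boldsymbol\phi$, but \eqref{Stokes} contains $-\Delta\Gbf+\Gbf$. Because only the integration-by-parts identities are used, not the equation itself, this causes no difficulty.
\end{itemize}

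\textbf{Uniqueness.} If $(\theta,\u)$ solves the system in the stated sense, then $(\theta,\u-\mathcal R\g)$ solves \eqref{state:eq:state:lift} in the sense of Definition \ref{def:WS:state}. This follows by reversing the computations above, each of which is an identity. Uniqueness in Theorem \ref{Wellposednes:state:lift} then gives uniqueness here.

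The main obstacle I expect is the bookkeeping of boundary terms in the momentum identity, in particular verifying that the $\beta$ contributions from the $\Vb^1$ inner product and from the traction combine exactly to the coefficient $\alpha/\nu$ in front of $\partial_t\g$. The Green formula must be justified for $\Gbf(t)\in H^3(\O)$ and $\partial_t\Gbf(t)\in H^3(\O)$, which the lifting estimates guarantee for a.e.\ $t$.
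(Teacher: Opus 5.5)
Your route is the paper's route. The paper gives no argument beyond saying that Theorem \ref{Wellposednes:state} follows immediately from Theorem \ref{Wellposednes:state:lift}, and your existence part fills this in correctly. In particular, the $\frac{\alpha\beta}{\nu}$ boundary term of $(\cdot,\cdot)_{\Vb^1(\O)}$ does cancel the friction part of the traction of $\partial_t\Gbf$, leaving exactly $\frac{\alpha}{\nu}\int_\Gamma(\partial_t\g\cdot\tau)(\boldsymbol\phi\cdot\tau)\,dS$. The pure-$\Gbf$ trilinear term does cancel via $\eqref{trilinear1}_2$.

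The gap is in the uniqueness step. Definition \ref{def:WS:state} requires $\v=\u-\mathcal R\g\in L^\infty(0,T_\xi;\wiWb)$, i.e.\ $[2\nu\n\cdot\Db(\v)+\beta\v]\cdot\tau=0$ on $\Gamma$. That condition is not among the three requirements of the solution concept in Theorem \ref{Wellposednes:state}. Reversing your Green formulas does not produce it either, since they only rewrite the weak equation. The paper's one-line deduction is silent on this point as well.

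The condition cannot be skipped. The lifted uniqueness proof applies $\eqref{curl:estimates}_3$ to the difference of two solutions, and Lemma \ref{lem:curl:estimates} is stated for fields in $\wiWb$, because the boundary terms generated by $\alpha\Delta$ are controlled through the Navier condition. So you must first show that every solution in the stated class satisfies \eqref{state:eq:boundary}. One way to do this is as follows.
\begin{itemize}
\item Since $\u(0)=\u_0$ and $\partial_t\u\in L^2(0,T_\xi;\Vb^1(\O))$, we have $\u(t)\in\Vb^1(\O)\cap H^3(\O)$. Then $\eqref{trilinear1}_2$, which needs no tangential condition, turns the trilinear terms into $(\curl\Upsilon_\alpha(\u)\times\u,\boldsymbol\phi)$.
\item Write $(\partial_t\u,\boldsymbol\phi)_{\Vb^1(\O)}=(\partial_t\u,\boldsymbol\phi)+\frac{\alpha}{\nu}a(\partial_t\u,\boldsymbol\phi)$, where $a(\w,\boldsymbol\phi):=2\nu(\Db(\w),\Db(\boldsymbol\phi))+\beta\int_\Gamma(\w\cdot\tau)(\boldsymbol\phi\cdot\tau)\,dS$.
\item Hence \eqref{Weak:state:u} says that $\mathbf m:=\u+\frac{\alpha}{\nu}\partial_t\u$ is, for a.e.\ $t$, a weak solution of a stationary Stokes problem. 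Its Navier data are $(\g+\frac{\alpha}{\nu}\partial_t\g)\cdot\tau$ and its forcing is $\xi\theta e_2-\partial_t\u-\curl\Upsilon_\alpha(\u)\times\u\in L^2(\O)$.
\item The $H^2$ regularity result used in Lemma \ref{Lifting:estimates} gives $\mathbf m\in L^2(0,T_\xi;H^2(\O))$, satisfying that Navier condition in the trace sense.
\item Consequently $D:=[2\nu\n\cdot\Db(\u)+\beta\u]\cdot\tau-\g\cdot\tau$ solves $D+\frac{\alpha}{\nu}\partial_tD=0$ on $\Gamma$. The compatibility condition $\g(0)=\g_{\rm in}$ gives $D(0)=0$, so $D\equiv0$.
\end{itemize}
Only after this is $(\theta,\u-\mathcal R\g)$ a solution in the sense of Definition \ref{def:WS:state}, so that the lifted uniqueness applies. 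This is also where compatibility is genuinely used: the weak equation alone only forces $D(t)=e^{-\nu t/\alpha}D(0)$. Alternatively, you may build \eqref{state:eq:boundary} explicitly into the solution concept, in which case your reversal argument suffices.
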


\section{Existence of optimal solution of \eqref{eqn:control:problem}}\setcounter{equation}{0}\label{sec:existence-optimal-control}
In this section, we establish our first main result, namely, the existence of an optimal control for the problem \eqref{eqn:control:problem}.

\begin{theorem}\label{existence:control}
    Consider the scalar field $\theta$ governed by the system \eqref{state:eq:state}. Under the assumption of Theorem \ref{Wellposednes:state}, there exists at least one optimal solution $\g^{\ast}\in \Uad$ to the optimal control problem \eqref{eqn:control:problem}.
\end{theorem}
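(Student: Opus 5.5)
We use the direct method of the calculus of variations. Since $\Uad$ is nonempty, bounded and $\|\theta(T_\xi)\|^2_{(H^1(\O))'}\ge0$, while by the uniform estimate \eqref{v:H3:estimate} (resp.\ \eqref{v:theta:estimate}) together with \eqref{eq:uniform-lifting} the enstrophy term $\frac\zeta2\int_0^{T_\xi}\|\curl\u\|^2_{L^2}\,dt$ is bounded by a constant depending only on $M$ and the data, $\J$ is bounded from below on $\Uad$. Let $j:=\inf_{\Uad}\J$ and pick a minimizing sequence $\g_n\in\Uad$. Since $\Uad$ is bounded, closed and convex in the Hilbert space $\Ucal$, it is weakly closed, and we extract a subsequence $\g_n\rightharpoonup\g^*$ weakly in $\Ucal$ with $\g^*\in\Uad$. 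In particular $\g^*(0)=\g_{\rm in}$, because the trace map \eqref{eq:control-trace} is linear and continuous, hence weakly continuous.

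Next we pass to the limit in the states. Set $\Gbf_n=\cR\g_n$. By linearity and boundedness of $\cR$ (Lemma~\ref{Lifting:estimates}), $\Gbf_n\rightharpoonup\Gbf^*=\cR\g^*$ weakly in $L^2(0,T;H^4)\cap H^1(0,T;H^3)$. The Aubin--Lions lemma then gives strong convergence in $L^2(0,T;H^{3})$ and in $C([0,T];H^{2})$. Let $\v_n$ and $\theta_n$ be the solutions of \eqref{state:eq:state:lift} with lifting $\Gbf_n$ from Theorem~\ref{Wellposednes:state:lift}; their bounds are uniform in $n$ since $T_\xi$ and all constants depend only on $M$. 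Thus $\v_n$ is bounded in $L^\infty(0,T_\xi;\wiWb)$ and $\partial_t\v_n$ in $L^2(0,T_\xi;\Vb^1)$, while $\theta_n$ is bounded in $L^\infty(0,T_\xi;H^\xi)$ and $\partial_t\theta_n$ in $L^\infty(0,T_\xi;(H^{1-\xi})')$. By Aubin--Lions and Banach--Alaoglu, we get $\v_n\to\v$ strongly in $C([0,T_\xi];H^2)$, weakly-$*$ in $L^\infty(0,T_\xi;H^3)$, and $\theta_n\to\theta$ weakly-$*$ in $L^\infty(0,T_\xi;H^\xi)$ and strongly in $C([0,T_\xi];(H^1)')$, using the compact embedding $L^2\hookrightarrow(H^1)'$ and $H^\xi\hookrightarrow (H^{1-\xi})'$. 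Passing to the limit in \eqref{Weak:state:v}--\eqref{Weak:state:theta} is done term by term: each nonlinearity is a product of one strongly and one weakly convergent factor, for instance $b(\boldsymbol\phi,\v_n,\Upsilon_\alpha(\v_n))$ and $b(\Gbf_n,\boldsymbol\phi,\Upsilon_\alpha(\v_n))$, and the transport term $(\u_n\theta_n,\nabla\psi)$ with $\u_n=\v_n+\Gbf_n\to\u$ strongly in $L^2(0,T_\xi;L^\infty)$. Hence $(\theta,\v)$ solves the limit problem with lifting $\Gbf^*$, and by the uniqueness part of Theorem~\ref{Wellposednes:state:lift} it is the state associated with $\g^*$. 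Uniqueness also lets the whole sequence converge.

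Finally we treat lower semicontinuity, which is the delicate step. The terminal term needs $\theta_n(T_\xi)\to\theta^*(T_\xi)$ in $(H^1)'$ at a fixed time. We obtain this from $C([0,T_\xi];(H^1)')$ compactness via Arzelà--Ascoli: $\theta_n(t)$ is bounded in $L^2$, hence precompact in $(H^1)'$, and $\partial_t\theta_n=-\diver(\u_n\theta_n)$ is bounded in $L^\infty(0,T_\xi;(H^1)')$, so the $\theta_n$ are equicontinuous. The limit is identified as $\theta^*$ by the previous step. Thus $\|\theta_n(T_\xi)\|_{(H^1)'}\to\|\theta^*(T_\xi)\|_{(H^1)'}$. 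The enstrophy term carries a negative sign, so it requires convergence rather than mere semicontinuity. This follows from the strong convergence $\u_n\to\u^*$ in $L^2(0,T_\xi;H^2)$, which gives $\int_0^{T_\xi}\|\curl\u_n\|^2\,dt\to\int_0^{T_\xi}\|\curl\u^*\|^2\,dt$. For the control cost, weak lower semicontinuity of the norm gives $\|\g^*\|_\Ucal\le\liminf\|\g_n\|_\Ucal$. Combining the three terms, $\J(\g^*)\le\liminf\J(\g_n)=j$, so $\g^*$ is optimal.
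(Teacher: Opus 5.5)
Your proposal is correct and follows essentially the same direct-method argument as the paper: a minimizing sequence, weak compactness in $\Ucal$ with the trace preserving $\g^*(0)=\g_{\rm in}$, uniform state bounds, Aubin--Lions/Simon compactness giving strong convergence in $C([0,T_\xi];H^2)$ and $C([0,T_\xi];(H^1)')$, identification of the limit by uniqueness, continuity of the terminal and enstrophy terms, and weak lower semicontinuity of the control cost. The only minor variation is where you pass to the limit. You use the lifted formulation \eqref{Weak:state:v}, which is why you need strong convergence of $\Gbf_n$. The paper instead passes to the limit directly in \eqref{Weak:state:u}, where $\g_n$ and $\partial_t\g_n$ enter linearly through boundary integrals, so weak convergence in $\Ucal$ suffices for those terms.
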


\begin{proof}[Proof of Theorem \ref{existence:control}]
We first recall that $(\theta(\g),\u(\g))$ denotes the solution of system \eqref{state:eq:state}. In view of Theorem \ref{Wellposednes:state}, there exists a positive constant $C_M>0$ independent of $\g\in\Uad$ such that
\begin{align}
 \|\u(\g)\|_{L^\infty(0,T_\xi;H^3(\O))}
 +\|\partial_t\u(\g)\|_{L^2(0,T_\xi;\Vb^1(\O))}&\leq C_M,
 \label{eq:uniform-u}\\
 \|\theta(\g)\|_{L^\infty(0,T_\xi;H^\xi(\O))}
 +\|\partial_t\theta(\g)\|_{L^\infty(0,T_\xi;(H^{1-\xi}(\O))')}
 &\leq C_M.\label{eq:uniform-theta}
\end{align}

Choose any arbitrary $\g_0\in\Uad$.  The estimates
\eqref{eq:uniform-u}-\eqref{eq:uniform-theta} imply that every term in \eqref{control:eq:cost} is finite.
Consequently,
\[
 \inf_{\g\in\Uad}\J(\g)\le\J(\g_0)<\infty.
\]
On the other hand, for every $\g\in\Uad$,
\begin{align*}
 \J(\g)
 &\ge-\frac\zeta2\int_0^{T_\xi}
            \|\curl\u(t;\g)\|_{L^2}^2 dt \ge-\frac\zeta2T_\xi C_M^2.
\end{align*}
Therefore
\begin{equation}\label{eq:j-finite}
 -\infty<\J^*:=\inf_{\g\in\Uad}\J(\g)<\infty.
\end{equation}
By the definition of the infimum, there is a sequence $\{\gn\}\subset\Uad$
such that
	\begin{equation}\label{eq:minimizing-sequence}
	\J^*\leq\J(\g_n)\leq\J^*+\frac1n   \;\; \text{ and }
 \;\;	\J(\g_n)\to\J^*.
\end{equation}

The boundedness of $\Uad$ gives
\begin{equation}\label{eq:gn-bounded}
	\sup_n\|\g_n\|_{\Ucal}\leq M.
\end{equation}
Since $\Ucal$ is Hilbert, after passing to a subsequence,
\begin{equation}\label{eq:gn-weak}
	\g_n\rightharpoonup\g^*\qquad\text{in }\Ucal.
\end{equation}
The trace operator in \eqref{eq:control-trace} is bounded and linear;
therefore
\[
\g_n(0)\rightharpoonup\g^*(0)
\qquad\text{in }\Vb^2(\Gamma).
\]
Since $\g_n(0)=\g_{\rm in}$ for every $n$, it follows that
$\g^*(0)=\g_{\rm in}$. The norm is weakly lower semicontinuous, so
$\|\g^*\|_{\Ucal}\leq M$. Hence $\g^*\in\Uad$.

Set $\Gbf_n:=\mathcal R\g_n$ and $\Gbf^*:=\mathcal R\g^*.$
The bounded linearity of $\mathcal R$ and
Lemma~\ref{Lifting:estimates} imply
\begin{align}
	\Gbf_n&\rightharpoonup\Gbf^*
	&&\text{in }L^2(0,T_\xi;H^4(\O)),\label{eq:lift-weak}\\
	\partial_t\Gbf_n&\rightharpoonup\partial_t\Gbf^*
	&&\text{in }L^2(0,T_\xi;H^3(\O)).\label{eq:lift-time-weak}
\end{align}
Since
$H^4(\O)\hookrightarrow H^{4-\varepsilon}(\O)\hookrightarrow H^3(\O)$ with compact embedding $H^4(\O)\hookrightarrow H^{4-\varepsilon}(\O)$ for
$0<\varepsilon<1$, Aubin-Lions compactness lemma gives
\begin{equation}\label{eq:lift-strong}
	\Gbf_n\to\Gbf^*
	\qquad\text{in }L^2(0,T_\xi;H^{4-\varepsilon}(\O)).
\end{equation}
For completeness, the boundary controls themselves satisfy
\[
\g_n\to\g^*
\quad\text{in }L^2(0,T_\xi;H^{5/2-\varepsilon}(\Gamma))
\]
after a further subsequence, but only the interior convergence
\eqref{eq:lift-strong} is used in the nonlinear state equation.

Let $(\theta_n,\u_n)=(\theta(\g_n),\u(\g_n))$. The uniform state bounds
give, after extraction,
\begin{align}
	\u_n&\overset{*}{\rightharpoonup}\u^*
	&&\text{in }L^\infty(0,T_\xi;H^3(\O)),\label{eq:state-u-weak}\\
	\partial_t\u_n&\rightharpoonup\partial_t\u^*
	&&\text{in }L^2(0,T_\xi;\Vb^1(\O)),\label{eq:state-ut-weak}\\
	\u_n&\to\u^*
	&&\text{in }C([0,T_\xi];H^2(\O)),\label{eq:state-u-strong}\\
	\theta_n&\overset{*}{\rightharpoonup}\theta^*
	&&\text{in }L^\infty(0,T_\xi;H^\xi(\O)),\label{eq:state-theta-weak}\\
	\partial_t\theta_n&\overset{*}{\rightharpoonup}\partial_t\theta^*
	&&\text{in }L^\infty(0,T_\xi;(H^{1-\xi}(\O))'),
	\label{eq:state-thetat-weak}\\
		\theta_n & \to\theta^*
	&& \text{in }C([0,T_\xi];(H^1(\O))').\label{eq:state-theta-strong}
\end{align}
The two strong convergences in \eqref{eq:state-u-strong} and \eqref{eq:state-theta-strong} are justified by the compact embeddings $H^3(\O)\hookrightarrow H^2(\O)$ and $L^2(\O)\hookrightarrow(H^1(\O))'$, respectively, and \cite[Corollary 4]{Simon1986}.

Our next aim is to prove that $(\theta^*,\u^*)$ is the state corresponding to $\g^*$. Take $\psi\in H^1(\O)$ and $\eta\in C_c^\infty(0,T_\xi)$, and consider the following weak form of transport equation for $(\theta_n,\un)$:
\begin{equation}\label{eq:transport-n}
 -\int_0^{T_\xi}(\theta_n,\psi)\eta'(t) dt
 -\int_0^{T_\xi}(\un\theta_n,\nabla\psi)\eta(t) dt=0.
\end{equation}
By the convergence in \eqref{eq:state-theta-weak}, we get
\begin{equation}\label{eq:transport-time-limit}
 \int_0^{T_\xi}(\theta_n,\psi)\eta' dt
 \to 
 \int_0^{T_\xi}(\theta^*,\psi)\eta' dt.
\end{equation}
For the second term, we consider 
\begin{align}
   & \left| \int_0^{T_\xi}(\un\theta_n,\nabla\psi)\eta(t) dt - \int_0^{T_\xi}(\u^*\theta^*,\nabla\psi)\eta(t) dt\right|
   \nonumber\\
   & = \left| \int_0^{T_\xi}([(\un-\u^*)\theta_n+\u^*(\theta_n- \theta^*)],\nabla\psi)\eta(t) dt \right|
   \nonumber\\
   & \leq \|\eta\|_{L^1(0,T_\xi)} \|\un-\u^*\|_{C([0,T_\xi];L^\infty(\O))}
 \|\theta_n\|_{L^\infty(0,T_\xi;L^2(\O))} \|\nabla\psi\|_{L^2(\O)}  + \left| \int_0^{T_\xi}(\u^*(\theta_n- \theta^*),\nabla\psi)\eta(t) dt \right|
 \nonumber\\
 & \to 0 \text{ as } n\to\infty,
\end{align}
where we have used strong convergence in \eqref{eq:state-u-strong} and weak$^*$ convergence in \eqref{eq:state-theta-weak} for the first and second term, respectively, in the last inequality. Therefore, passing to the limit in \eqref{eq:transport-n}, we obtain
\begin{equation}\label{eq:transport-limit}
 -\int_0^{T_\xi}(\theta^*,\psi)\eta' dt
 -\int_0^{T_\xi}(\u^*\theta^*,\nabla\psi)\eta dt=0.
\end{equation}
Thus
\begin{align*}
    & \left\langle\partial_t\theta^*, \psi\right\rangle_{(H^{1-\xi}(\O))^{\prime}\times H^{1-\xi}(\O)} + \langle\diver[\u^*\theta^*],\psi\rangle_{(H^{1-\xi}(\O))^{\prime}\times H^{1-\xi}(\O)} =0.
\end{align*}
Finally, \eqref{eq:state-theta-strong} and $\theta_n(0)=\theta_0$ give
\begin{equation}\label{eq:theta-initial-limit}
 \theta^*(0)=\theta_0.
\end{equation}

Next, we fix $\boldsymbol\phi\in\Vb^1(\O)$ and
$\eta\in C_c^\infty(0,T_\xi)$, and consider the following weak form of second grade fluid equations for $(\theta_n,\un)$:
\begin{align}
 & \underbrace{\int_0^{T_\xi}(\partial_t\un,\boldsymbol\phi)_{\Vb^1(\O)}  \eta dt}_{=:S_1}
 +2\nu \underbrace{\int_0^{T_\xi}(\Db(\un),\Db(\boldsymbol\phi))\eta dt}_{=:S_2}
 +\beta \underbrace{\int_0^{T_\xi}\int_\Gamma
 (\un\cdot\tau)(\boldsymbol\phi\cdot\tau)\eta dS dt}_{=:S_3}
 \nonumber\\
 &\quad+ \underbrace{\int_0^{T_\xi}b(\boldsymbol\phi,\un,
                 \Upsilon_\alpha(\un) )\eta dt }_{=:S_4}
 - \underbrace{\int_0^{T_\xi}b(\un,\boldsymbol\phi,
                \Upsilon_\alpha (\un) )\eta dt}_{=:S_5}
 \nonumber\\
 &= \xi \underbrace{\int_0^{T_\xi}(\theta_n e_2,\boldsymbol\phi)\eta dt }_{=:S_6}
 + \underbrace{\int_0^{T_\xi}\int_\Gamma
 (\gn\cdot\tau)(\boldsymbol\phi\cdot\tau)\eta dS dt}_{=:S_7}
  +\frac\alpha\nu \underbrace{\int_0^{T_\xi}\int_\Gamma
 (\partial_t\gn\cdot\tau)(\boldsymbol\phi\cdot\tau)
 \eta dS dt }_{=:S_8} .\label{eq:momentum-n}
\end{align}
We consider its terms one by one.

\medskip
\noindent\underline{\emph{The term $S_1$:}} From \eqref{eq:state-ut-weak}, we immediately have
\begin{equation}\label{eq:limit-time-momentum}
 \int_0^{T_\xi}(\partial_t\un,\boldsymbol\phi)_{\Vb^1(\O)}\eta dt
 \to 
 \int_0^{T_\xi}(\partial_t\u^*,\boldsymbol\phi)_{\Vb^1(\O)}\eta dt.
\end{equation}

\medskip
\noindent\underline{\emph{The term $S_2$:}}
The strong convergence \eqref{eq:state-u-strong} implies, in particular,
$\Db(\un)\to\Db(\u^*)$ strongly in $C([0,T_\xi] ;L^2(\O))$.  Hence
\begin{align}
 &\left|\int_0^{T_\xi}
 (\Db(\un)-\Db(\u^*),\Db(\boldsymbol\phi))\eta dt\right| \leq \|\eta\|_{L^1(0,T_\xi)}
 \|\Db(\un)-\Db(\u^*)\|_{C([0,T_\xi] ;L^2(\O))}
 \|\Db(\boldsymbol\phi)\|_{L^2(\O)}\to 0.
 \label{eq:limit-viscous}
\end{align}

\medskip
\noindent\underline{\emph{The term $S_3$:}}
The trace map $H^2(\O) \to H^{3/2}(\Gamma)$ is continuous.  Therefore
\[
 \un\cdot\tau \to\u^*\cdot\tau
 \quad\text{strongly in }C([0,T_\xi];L^2(\Gamma)).
\]
Since $\boldsymbol\phi\cdot\tau \in L^2(\Gamma)$,
\begin{align}
 &\left|\int_0^{T_\xi}\int_\Gamma
 ((\un-\u^*)\cdot\tau)(\boldsymbol\phi\cdot\tau)\eta dS dt\right|
 \nonumber\\
 &\leq \|\eta\|_{L^1(0,T_\xi)}
 \|(\un-\u^*)\cdot\tau\|_{C([0,T_\xi];L^2(\Gamma))}
 \|\boldsymbol\phi\cdot\tau\|_{L^2(\Gamma)}\to 0.
 \label{eq:limit-friction}
\end{align}

\medskip
\noindent\underline{\emph{Terms $S_4$ and $S_5$:}}
Define
\begin{equation*}
 \boldsymbol m_n:=\Upsilon_\alpha(\un) \;\; \text{ and } \;\;
 {\boldsymbol m^*}:=\Upsilon_\alpha(\u^*).
\end{equation*}
By \eqref{eq:state-u-strong}, we find
\begin{equation}\label{eq:mn-strong}
 \boldsymbol m_n\to {\boldsymbol m^*}
 \quad\text{strongly in }C([0,T_\xi];L^2(\O)).
\end{equation}
For $S_4$, consider
\begin{align}
 &b(\boldsymbol\phi,\un,\boldsymbol m_n)
 -b(\boldsymbol\phi,\u,{\boldsymbol m^*})
 =b(\boldsymbol\phi,\un-\u^*,\boldsymbol m_n)
       +b(\boldsymbol\phi,\u^*,
                    \boldsymbol m_n-{\boldsymbol m^*}).
 \label{eq:first-b-split}
\end{align}
Using $H^1(\O)\hookrightarrow L^4(\O)$, we get
\begin{align}
 |b(\boldsymbol\phi,\un-\u^*,\boldsymbol m_n)|
 &\leq \|\boldsymbol\phi\|_{L^4(\O)}
      \|\nabla(\un-\u^*)\|_{L^4(\O)}\|\boldsymbol m_n\|_{L^2(\O)}
 \nonumber\\
 &\leq C\|\boldsymbol\phi\|_{H^1(\O)}
      \|\un-\u^*\|_{H^2(\O)}\|\boldsymbol m_n\|_{L^2(\O)},
 \label{eq:first-b-estimate-a}\\
 |b(\boldsymbol\phi,\u^*,
                    \boldsymbol m_n-\boldsymbol m^*)|
 &\leq C \|\boldsymbol\phi\|_{H^1(\O)}\|\u^*\|_{H^2(\O)}
      \|\boldsymbol m_n-{\boldsymbol m^*}\|_{L^2(\O)}.
 \label{eq:first-b-estimate-b}
\end{align}
The right-hand sides converge uniformly in time to zero by
\eqref{eq:state-u-strong} and \eqref{eq:mn-strong}.  Therefore
\begin{equation}\label{eq:first-b-limit}
 \int_0^{T_\xi}b(\boldsymbol\phi,\un,\boldsymbol m_n)\eta dt
 \to
 \int_0^{T_\xi}b(\boldsymbol\phi,\u^*,
                         {\boldsymbol m^*})\eta dt.
\end{equation}

For $S_5$, consider
\begin{align}
 &b(\un,\boldsymbol\phi,\boldsymbol m_n)
 -b(\u^*,\boldsymbol\phi,{\boldsymbol m^*})
=b(\un-\u^*,\boldsymbol\phi,\boldsymbol m_n)
 +b(\u^*,\boldsymbol\phi,
              \boldsymbol m_n- {\boldsymbol m^*}).
 \label{eq:second-b-split}
\end{align}
Now using $H^2(\O)\hookrightarrow L^\infty(\O)$, we estimate
\begin{align}
 |b(\un-\u^*,\boldsymbol\phi,\boldsymbol m_n)|
 &\le\|\un-\u^*\|_{L^\infty(\O)}
      \|\nabla\boldsymbol\phi\|_{L^2(\O)}
      \|\boldsymbol m_n\|_{L^2(\O)}
 \nonumber\\
 &\leq C\|\un-\u^*\|_{H^2(\O)}
      \|\boldsymbol\phi\|_{H^1(\O)}\|\boldsymbol m_n\|_{L^2(\O)},
 \label{eq:second-b-estimate-a}\\
 |b(\u^*,\boldsymbol\phi,
              \boldsymbol m_n- {\boldsymbol m^*})|
 &\leq C\|\u^*\|_{H^2(\O)}\|\boldsymbol\phi\|_{H^1(\O)}
      \|\boldsymbol m_n- {\boldsymbol m^*}\|_{L^2(\O)}.
 \label{eq:second-b-estimate-b}
\end{align}
Again both right-hand sides converge uniformly in time to zero.  Hence
\begin{equation}\label{eq:second-b-limit}
 \int_0^{T_\xi}b(\un,\boldsymbol\phi,\boldsymbol m_n)\eta dt
 \to
 \int_0^{T_\xi}b(\u^*,\boldsymbol\phi,
                         {\boldsymbol m^*})\eta dt.
\end{equation}

\medskip
\noindent\underline{\emph{The term $S_6$}}:
For $\xi=0$, this term is identically zero.  For $\xi=1$, the weak-star
convergence \eqref{eq:state-theta-weak} implies
\begin{equation}\label{eq:limit-buoyancy}
 \xi\int_0^{T_\xi}(\theta_n e_2,\boldsymbol\phi)\eta dt
 \to 
 \xi\int_0^{T_\xi}(\theta^* e_2,\boldsymbol\phi)\eta dt.
\end{equation}

\medskip
\noindent\underline{\emph{Terms $S_7$ and $S_8$}}:
From \eqref{eq:gn-weak}, we write
\begin{align*}
 \gn\cdot\tau\rightharpoonup\g^*\cdot\tau
 & \text{\; weakly in }L^2(0,T_\xi;L^{2}(\Gamma)), \\
\partial_t\gn\cdot\tau\rightharpoonup \partial_t\g^*\cdot\tau
 & \text{\; weakly in }L^2(0,T_\xi;L^{2}(\Gamma)),
\end{align*}
which implies
\begin{align}
 &\int_0^{T_\xi}\int_\Gamma
 (\gn\cdot\tau)(\boldsymbol\phi\cdot\tau)\eta dS dt
  \to 
 \int_0^{T_\xi}\int_\Gamma
 (\g^*\cdot\tau)(\boldsymbol\phi\cdot\tau)\eta dS dt,
 \label{eq:limit-g-boundary}
\end{align}
and 
\begin{align}
 &\int_0^{T_\xi}\int_\Gamma
 (\partial_t\gn\cdot\tau)(\boldsymbol\phi\cdot\tau)\eta dS dt
  \to 
 \int_0^{T_\xi}\int_\Gamma
 (\partial_t\g^*\cdot\tau)(\boldsymbol\phi\cdot\tau)\eta dS dt.
 \label{eq:limit-g-boundary:t}
\end{align}
Combining \eqref{eq:limit-time-momentum}, \eqref{eq:limit-viscous},
\eqref{eq:limit-friction}, \eqref{eq:first-b-limit},
\eqref{eq:second-b-limit}, \eqref{eq:limit-buoyancy},
\eqref{eq:limit-g-boundary}, and \eqref{eq:limit-g-boundary:t}, we pass to
the limit in \eqref{eq:momentum-n} and obtain
\begin{align}
 & \int_0^{T_\xi}(\partial_t\u^*,\boldsymbol\phi)_{\Vb^1(\O)}  \eta dt
 +2\nu \int_0^{T_\xi}(\Db(\u^*),\Db(\boldsymbol\phi))\eta dt
 +\beta \int_0^{T_\xi}\int_\Gamma
 (\u^*\cdot\tau)(\boldsymbol\phi\cdot\tau)\eta dS dt
 \nonumber\\
 & + \int_0^{T_\xi}b(\boldsymbol\phi,\u^*,
                 \Upsilon_\alpha(\u^*) )\eta dt
 - \int_0^{T_\xi}b(\u^*,\boldsymbol\phi,
                 \Upsilon_\alpha(\u^*) )\eta dt
 \nonumber\\
 &= \xi \int_0^{T_\xi}(\theta^* e_2,\boldsymbol\phi)\eta dt 
 + \int_0^{T_\xi}\int_\Gamma
 (\g^*\cdot\tau)(\boldsymbol\phi\cdot\tau)\eta dS dt
  +\frac\alpha\nu \int_0^{T_\xi}\int_\Gamma
 (\partial_t\g^*\cdot\tau)(\boldsymbol\phi\cdot\tau)
 \eta dS dt. \label{eq:momentum-limit}
\end{align}
Since $\eta$ is arbitrary, \eqref{eq:momentum-limit} holds for almost every
time in the weak sense. In addition, since $\un(0)=\u_0$, the strong convergence in \eqref{eq:state-u-strong} implies $\u^*(0)=\u_0$. Together with \eqref{eq:theta-initial-limit}, this proves the initial
conditions.

We have now proved that the pair $(\theta^*,\u^*)$ is solution of system \eqref{state:eq:state} corresponding to $\g^*$ in the sense of Definition \ref{def:WS:state}, which due to uniqueness gives
\begin{equation}\label{eq:identify-state}
 (\theta^*,\u^*)=(\theta(\g^*),\u(\g^*)).
\end{equation}

We finally pass to the limit in the \eqref{control:eq:cost}. By \eqref{eq:state-theta-strong},
\begin{equation}\label{eq:limit-mix-cost}
	\frac12\|\theta_n(T_\xi)\|_{(H^1(\O))’}^2
	\to
	\frac12\|\theta^*(T_\xi)\|_{(H^1(\O))’}^2.
\end{equation}
By the weak lower semicontinuity property of norm, we get 
\begin{equation}\label{eq:lsc-control-cost}
 \frac\gamma2\|\g^*\|_{\Ucal}^2
 \le\liminf_{n\to\infty}\frac\gamma2\|\gn\|_{\Ucal}^2.
\end{equation}
By the strong convergence in \eqref{eq:state-u-strong} and uniform bound in \eqref{eq:uniform-u}, we obtain
\begin{align*}
 &\left|\int_0^{T_\xi}\|\curl\un\|_{L^2(\O)}^2 dt
 -\int_0^{T_\xi}\|\curl\u^*\|_{L^2(\O)}^2 dt\right|\nonumber\\
 &\quad\le\int_0^{T_\xi}
 \|\curl\un-\curl\u^*\|_{L^2(\O)}
 \left(\|\curl\un\|_{L^2(\O)}+\|\curl\u^*\|_{L^2(\O)}\right) dt
 \to 0,
\end{align*}
which gives
\begin{equation}\label{eq:limit-enstrophy-cost}
 -\frac\zeta2\int_0^{T_\xi}\|\curl\un\|_2^2 dt
 \to 
 -\frac\zeta2\int_0^{T_\xi}\|\curl\u^*\|_2^2 dt.
\end{equation}

Combining \eqref{eq:limit-mix-cost}-\eqref{eq:limit-enstrophy-cost}, and using \eqref{eq:identify-state}, we obtain
\begin{align}
 \J(\g^*)
 &\le\liminf_{n\to\infty}\J(\gn)
 =\J^*.\label{eq:J-liminf}
\end{align}
Because $\g^*\in\Uad$, the definition of $\J^*$ gives $\J^*\leq \J(\g^*)$.  Hence
\[
 \J(\g^*)=\J^\ast=\min_{\g\in\Uad}\J(\g).
\]
This completes the proof of the theorem.
\end{proof}

\section{Linearized state equation}%
\setcounter{equation}{0}\label{sec:linearized-state}

In this section, we investigate the well-posedness of the linearized system associated with the state system, which will be instrumental in deriving the duality relation with the adjoint system and, consequently, the first-order necessary optimality condition. Let us consider the following system:
  \begin{equation}\label{eq:linearized}
\left\{\begin{aligned}
 \partial_t\rho+\u\cdot\nabla\rho
 +\z\cdot\nabla\theta&=0 &&\text{in }(0,T_{\xi})\times\O,\\
 \partial_t\Upsilon_\alpha(\z)-\nu\Delta\z
 + \curl\Upsilon_\alpha(\z)\times\u
 +\curl\Upsilon_\alpha(\u)\times\z+\nabla q
 & = \xi\rho e_2 &&\text{in }(0,T_{\xi})\times\O,\\
 \diver\z&=0, &&\text{in }(0,T_{\xi})\times\O,\\
\rho(0) =0, \;\;  \z(0)& = \boldsymbol{0}, &&\text{in }\O,\\
     \z\cdot\n|_{\Gamma}  =0 \;\;\; \text{ and } \;\;\; [2\nu \n \cdot \Db(\z)  + \beta \z]\cdot\tau|_{\Gamma} &  = \h\cdot\tau.
\end{aligned}
\right.
\end{equation}

\begin{lemma}\label{lem:linearized}
    Assume that $(\theta_0, \u_0)\in H^{1}(\Omega)\times \Vb^3(\O)$. $\g\in\Uad$ and $\h\in\Ucal_0$. Then, there exists a unique solution pair $(\rho, \z)\in L^\infty(0,T_\xi;L^2(\O))\times L^\infty(0,T_\xi;\Wb)$ with $(\partial_t\rho, \partial_t\z)  \in   L^\infty(0,T_\xi;(H^1(\O))^{\prime})\times L^{2}(0,T_\xi;\Vb^1(\O))$ of the system \eqref{eq:linearized}. In addition, $\rho \in C([0,T_\xi];(H^1(\O))^{\prime})$.
\end{lemma}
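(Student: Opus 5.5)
We lift the boundary datum exactly as for the state system. Set $\Hbf:=\mathcal R\h$. By Lemma~\ref{Lifting:estimates}, $\Hbf\in L^2(0,T;H^4(\O))\cap H^1(0,T;H^3(\O))$, and since $\h\in\Ucal_0$ we have $\Hbf(0)=\boldsymbol 0$. Write $\z=\w+\Hbf$. Then $\w$ satisfies the homogeneous Navier-slip condition, $\w(0)=\boldsymbol 0$, and
\begin{align*}
\partial_t\Upsilon_\alpha(\w)-\nu\Delta\w+\curl\Upsilon_\alpha(\w)\times\u+\curl\Upsilon_\alpha(\u)\times\w+\nabla q
= \xi\rho e_2-\partial_t\Upsilon_\alpha(\Hbf)+\nu\Delta\Hbf-\curl\Upsilon_\alpha(\Hbf)\times\u-\curl\Upsilon_\alpha(\u)\times\Hbf .
\end{align*}
The scalar equation reads $\partial_t\rho+\u\cdot\nabla\rho=-(\w+\Hbf)\cdot\nabla\theta$. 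Existence will come from a Faedo-Galerkin scheme in the bases $\{\varphi_j\}$ and $\{\e_j\}$ of Subsection~\ref{subsec:galerkin}. The system is linear in $(\rho,\w)$ with coefficients $\u\in L^\infty(0,T_\xi;H^3(\O))$ and $\theta\in L^\infty(0,T_\xi;H^1(\O))$ (here $\theta_0\in H^1$ and Theorem~\ref{Wellposednes:state}). Hence the Galerkin ODEs are linear and global on $[0,T_\xi]$, and it suffices to prove uniform a priori bounds.

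\textbf{Energy estimates.} Test the $\w$-equation by $\w$. The terms $(\curl\Upsilon_\alpha(\u)\times\w,\w)$ vanish by \eqref{trilinear1}$_2$. The term $(\curl\Upsilon_\alpha(\w)\times\u,\w)$ is bounded by $C\|\w\|_{H^1}^2\|\u\|_{H^3}$ via \eqref{curl:estimates}$_3$; this is the reason for keeping $\u$, and not $\w$, in the $H^3$ slot. For the forcing terms we use \eqref{curl:estimates}$_1$, which gives $|(\curl\Upsilon_\alpha(\Hbf)\times\u,\w)|\le C\|\Hbf\|_{H^3}\|\u\|_{L^2}\|\w\|_{H^2}$. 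Since only $H^1$ control of $\w$ is available at this level, we instead rewrite the term through \eqref{trilinear1}$_2$ as $b(\w,\u,\Upsilon_\alpha\Hbf)-b(\u,\w,\Upsilon_\alpha\Hbf)$, which is bounded by $C\|\w\|_{H^1}\|\u\|_{H^2}\|\Hbf\|_{H^2}$. The same rewriting with roles swapped gives $|(\curl\Upsilon_\alpha(\u)\times\Hbf,\w)|\le C\|\w\|_{H^1}\|\Hbf\|_{H^2}\|\u\|_{H^3}$. The lifting terms $\partial_t\Upsilon_\alpha\Hbf$ and $\Delta\Hbf$ lie in $L^2(0,T;L^2)$. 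Testing the scalar equation by $\rho$ kills the transport term and gives
\[
\tfrac12\tfrac{d}{dt}\|\rho\|_{L^2}^2\le \|\w+\Hbf\|_{L^\infty}\|\nabla\theta\|_{L^2}\|\rho\|_{L^2}.
\]
For $\xi=1$ this is where $\theta\in L^\infty(0,T_\xi;H^1)$ is needed. Since $\|\w\|_{L^\infty}$ is not controlled by $\|\w\|_{H^1}$, we must also carry an $H^2$ bound on $\w$.

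\textbf{$H^2$ estimate.} This is the main obstacle. We test the momentum equation by $\partial_t\w$ and by $\mathbb P\Upsilon_\alpha(\w)$, as in Remark~\ref{subsec:galerkin}'s justification, and use \eqref{lem:1.6:1}, i.e. $\|\w\|_{H^2}\le C(\|\mathbb P\Upsilon_\alpha\w\|_{L^2}+\|\w\|_{H^1})$. The dangerous term $(\curl\Upsilon_\alpha(\w)\times\u,\mathbb P\Upsilon_\alpha\w)$ is again rewritten as $b(\cdot,\u,\Upsilon_\alpha\w)-b(\u,\cdot,\Upsilon_\alpha\w)$. Using $\diver\u=0$ and $\u\cdot\n=0$, the transport part $b(\u,\Upsilon_\alpha\w,\Upsilon_\alpha\w)$ vanishes, up to the commutator with $I-\mathbb P$. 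That commutator is controlled by the first lemma from \cite{ChemetovCipriano2017}, $\|(I-\mathbb P)\Upsilon_\alpha\w\|_{H^1}\le\|\w\|_{H^2}$. The remaining terms are bounded by $C\|\u\|_{H^3}\|\w\|_{H^2}^2$ plus forcing. If this commutator control fails, the fallback is to apply curl as in \eqref{H3:estimate:0}. That route reduces the estimate to a transport equation for $\curl\Upsilon_\alpha\w$ with an $H^{-1}$-type source, but it needs boundary data for $\curl\Upsilon_\alpha\w$, which is delicate.

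With the $H^2$ bound in hand, set $Y=\|\w\|_{\Vb^1}^2+\|\mathbb P\Upsilon_\alpha\w\|_{L^2}^2+\|\rho\|_{L^2}^2$. The three estimates give a linear differential inequality $Y'\le a(t)Y+b(t)$ with $a,b\in L^1(0,T_\xi)$, which depend on $\|\u\|_{H^3}$, $\|\theta\|_{H^1}$, and $\|\Hbf\|_{H^4}+\|\partial_t\Hbf\|_{H^3}$. Gronwall's lemma then gives uniform bounds, so $\w\in L^\infty(0,T_\xi;\Wb)$ and hence $\z\in L^\infty(0,T_\xi;\Wb)$, because $\Hbf\in C([0,T];H^3)$.

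\textbf{Time derivatives, limit, and uniqueness.} Testing by $\partial_t\w$ gives $\partial_t\z\in L^2(0,T_\xi;\Vb^1)$. For the scalar, duality against $\psi\in H^1$ gives $|\langle\partial_t\rho,\psi\rangle|\le(\|\u\|_{L^\infty}\|\rho\|_{L^2}+\|\z\|_{L^\infty}\|\theta\|_{L^2})\|\nabla\psi\|_{L^2}$, after integrating by parts in both transport terms. Hence $\partial_t\rho\in L^\infty(0,T_\xi;(H^1)')$, and together with $\rho\in L^\infty(0,T_\xi;L^2)$ this yields $\rho\in C([0,T_\xi];(H^1)')$. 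Passing to the limit is easier than in Theorem~\ref{existence:control} because the equations are linear: weak-$*$ convergence suffices, with Aubin--Lions used only for the initial data. Uniqueness follows by applying the same energy estimates to the difference of two solutions, which solves the homogeneous system with $\h=0$; Gronwall then gives $Y\equiv0$.
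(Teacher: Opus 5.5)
Your proposal is correct and follows essentially the same route as the paper's proof: you lift by $\Hbf=\mathcal R\h$ and work with $\w=\z-\Hbf$. You then test by $\w$, by $\Y=\mathbb P\Upsilon_\alpha(\w)$ together with \eqref{lem:1.6:1}, and test the scalar equation by $\rho$, close with Gronwall, and finally bound the time derivatives. (The paper avoids your $I-\mathbb P$ commutator by using $\curl\Upsilon_\alpha(\w)=\curl\Y$, so the critical term is exactly $b(\Y,\u,\Y)$.) One small correction: $\Hbf$ carries the nonzero Navier-slip trace $\h\cdot\tau$, so what you obtain is $\w\in L^\infty(0,T_\xi;\Wb)$ and $\z\in L^\infty(0,T_\xi;\Vb^2(\O))$, not $\z\in L^\infty(0,T_\xi;\Wb)$ --- which is also all that the paper's argument actually establishes.
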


\begin{proof}[Proof of Lemma \ref{lem:linearized}]
    Note that the following calculations are justified first for the Galerkin
approximations. 
For $\h\in\Ucal_0$, set $\Hbf=\mathcal R\h$, where
\begin{equation}\label{Stokes:Hbf}
	\left\{
	\begin{aligned}
		-\Delta\Hbf+\Hbf+\nabla\pi_{\Hbf}&=0,
		&\diver\Hbf&=0 &&\text{in }\O,\\
		\Hbf\cdot\n&=0,
		&\bigl[2\nu\n\cdot\Db(\Hbf)+\beta\Hbf\bigr]\cdot\tau
		&=\h\cdot\tau &&\text{on }\Gamma,\\
		&\int_\O\pi_{\Hbf}\,dx=0.
	\end{aligned}
	\right.
\end{equation}
By linearity and continuity of $\mathcal R$,
\begin{align*}
	\|\Hbf\|_{L^2(0,T;H^4)}
	+\|\partial_t\Hbf\|_{L^2(0,T;H^3)}
	\leq C_R\|\h\|_{\Ucal},
	\qquad \Hbf(0)=\mathcal R\h(0)=0.
\end{align*}

Define $\w:=\z-\Hbf$. In order to obtain the well-posedness of system \eqref{eq:linearized}, we will establish the well-posedness of the following system:
  \begin{equation}\label{eq:linearized:lift}
	\left\{\begin{aligned}
		\partial_t\rho+\u\cdot\nabla\rho
		+(\w+\Hbf)\cdot\nabla\theta&=0 &&\text{in }(0,T_{\xi})\times\O,\\
		\partial_t\Upsilon_\alpha(\w)-\nu\Delta\w
		+ \curl\Upsilon_\alpha(\w)\times\u
		+\curl\Upsilon_\alpha(\u)\times\w+\nabla q
		& = \xi\rho e_2 \\
		- \partial_t\Upsilon_\alpha(\Hbf) + \nu\Delta\Hbf - \curl\Upsilon_\alpha(\Hbf)\times\u & -\curl\Upsilon_\alpha(\u)\times\Hbf  &&\text{in }(0,T_{\xi})\times\O,\\
		\diver\w&=0, &&\text{in }(0,T_{\xi})\times\O,\\
	\rho(0)=0,  \;\; 	\w(0)& =\boldsymbol{0}, &&\text{in }\O,\\
		\w\cdot\n|_{\Gamma}  =0 \;\;\; \text{ and } \;\;\; [2\nu \n \cdot \Db(\w)  + \beta \w]\cdot\tau|_{\Gamma} &  = 0.
	\end{aligned}
	\right.
\end{equation}

	\medskip
\noindent\textbf{ $H^1$-estimate for $\w$.}
 Take the $L^2(\O)$ inner product of $\eqref{eq:linearized:lift}_2$ with
$\w$.  Since $\w\cdot\n=0$ and $\diver \w=0$, integration by parts gives
\begin{align}\label{time-H1}
 & \frac12\frac{d}{dt}\|\w\|_{\Vb^1(\O)}^2
 +2\nu\|\Db(\w)\|_{L^2(\O)}^2
  +\beta\|\w\cdot\tau\|_{L^2(\Gamma)}^2  
  \nonumber\\
  & = (\xi\rho e_2,\w) - (\curl\Upsilon_\alpha(\w)\times \u,\w) 
    + (- \partial_t\Upsilon_\alpha(\Hbf) + \nu\Delta\Hbf - \curl\Upsilon_\alpha(\Hbf)\times\u -\curl\Upsilon_\alpha(\u)\times\Hbf, \w).
\end{align}
In view of Lemma \ref{lem:curl:estimates}, we get
\begin{equation}\label{first-nonlinear-H1}
 |(\curl\Upsilon_\alpha(\w)\times \u,\w)|
 \leq C\|\u\|_{H^3(\O)}\|\w\|_{H^1(\O)}^2.
\end{equation}
  By H\"older's, Sobolev and Young's inequalities, and Lemma \ref{Lifting:estimates}, we have
\begin{align}
&\left|(\xi\rho e_2,\w) + (- \partial_t\Upsilon_\alpha(\Hbf) + \nu\Delta\Hbf - \curl\Upsilon_\alpha(\Hbf)\times\u -\curl\Upsilon_\alpha(\u)\times\Hbf, \w) \right|
\nonumber\\
&   \leq \xi\|\rho\|_{L^2(\O)}^2  + C(1+\|\u\|_{H^3(\O)} ) \|\w\|_{\Vb^1(\O)}^2
   + C[\|\partial_t\Hbf\|^2_{H^2(\O)} + \|\Hbf\|^2_{H^3(\O)} ].         \label{boundary-H1}
\end{align}
Combining \eqref{time-H1}-\eqref{boundary-H1}, we obtain
\begin{align}
 & \frac12\frac{d}{dt}\|\w\|_{\Vb^1(\O)}^2
 +2\nu\|\Db(\w)\|_{L^2(\O)}^2
 +\beta\|\w\cdot\tau\|_{L^2(\Gamma)}^2
\nonumber\\
&\leq  \xi\|\rho\|_{L^2(\O)}^2   + C(1+\|\u\|_{H^3(\O)})\|\w\|_{\Vb^1(\O)}^2  +C + C[\|\partial_t\Hbf\|^2_{H^2(\O)} + \|\Hbf\|^2_{H^3(\O)} ] . \label{H1-estimate}
\end{align}

\medskip
\noindent\textbf{ $H^2$-estimate for $\w$.}
Set $\Y:=\mathbb{P} \Upsilon_\alpha(\w).$ Take the inner product of $\eqref{eq:linearized:lift}_2$ with $\Y$, and use the equality \eqref{trilinear1} and $\curl\Upsilon_\alpha(\w)=\curl \Y$  to obtain
\begin{align}\label{time-H2}
 \frac12\frac{d}{dt}\|\Y\|_{L^2(\O)}^2 & =  (\Delta \w,\Y) - b(\Y,\u,\Y) - (\curl\Upsilon_\alpha(\u)\times \w,\Y) + (\xi\rho e_2,\Y)
\nonumber \\ 
 & \quad +  (- \partial_t\Upsilon_\alpha(\Hbf) + \nu\Delta\Hbf - \curl\Upsilon_\alpha(\Hbf)\times\u -\curl\Upsilon_\alpha(\u)\times\Hbf, \Y).
\end{align}
Using the H\"older's, Sobolev and Young's inequalities, and Lemma \ref{Lifting:estimates}, we estimate
\begin{align}\label{visc-H2}
    & \left|(\Delta \w,\Y) - b(\Y,\u,\Y) - (\curl\Upsilon_\alpha(\u)\times \w,\Y) + (\xi\rho e_2,\Y)\right|
    \nonumber\\
    &  + |(- \partial_t\Upsilon_\alpha(\Hbf) + \nu\Delta\Hbf - \curl\Upsilon_\alpha(\Hbf)\times\u -\curl\Upsilon_\alpha(\u)\times\Hbf, \Y)|
    \nonumber\\
    & \leq \|\w\|_{H^2(\O)}\|\Y\|_{L^2(\O)} + \|\u\|_{L^{\infty}(\O)}\|\Y\|_{L^2(\O)}^2 + \|\u\|_{H^{3}(\O)} \|\w\|_{L^{\infty}(\O)} \|\Y\|_{L^2(\O)} + \xi \|\rho\|_{L^2(\O)}\|\Y\|_{L^2(\O)}
        \nonumber\\
    & \quad + [\|\partial_t\Hbf\|_{H^2(\O)} + \|\Hbf\|_{H^2(\O)} + \|\Hbf\|_{H^3(\O)}\|\u\|_{H^2(\O)} + \|\u\|_{H^3(\O)} \|\Hbf\|_{H^2(\O)}     ]\|\Y\|_{L^2(\O)}
    \nonumber\\
    & \leq \|\w\|_{\Vb^1(\O)}^2 + (1 + \|\u\|_{H^{3}(\O)})\|\Y\|_{L^2(\O)}^2 +  \xi \|\rho\|_{L^2(\O)}^2 +C [\|\partial_t\Hbf\|^2_{H^2(\O)} + \|\Hbf\|^2_{H^3(\O)}],
\end{align}
where we have also used Lemma \ref{lem:1.6}.  Combining \eqref{time-H2}-\eqref{visc-H2}, we obtain
\begin{align}
 \frac12\frac{d}{dt}\|\Y\|_{L^2(\O)}^2
 &\leq \|\w\|_{\Vb^1(\O)}^2 + (1 + \|\u\|_{H^{3}(\O)})\|\Y\|_{L^2(\O)}^2 +  \xi \|\rho\|_{L^2(\O)}^2 +C [\|\partial_t\Hbf\|^2_{H^2(\O)} + \|\Hbf\|^2_{H^3(\O)}]. \label{H2-energy}
\end{align}
\medskip
\noindent\textbf{ $L^2$-estimate for $\rho$.}
Taking the inner product of $\eqref{eq:linearized:lift}_1$ with $\rho$ gives
\begin{align}
 \frac12\frac{d}{dt}\|\rho\|_{L^2(\O)}^2
 &=-(\w\cdot\nabla\theta,\rho) -(\Hbf\cdot\nabla\theta,\rho) 
 \nonumber\\
 & \leq \|\w+\Hbf\|_{L^\infty(\O)}\|\nabla\theta\|_{L^2(\O)}
       \|\rho\|_{L^2(\O)}                               
    \nonumber\\
    &    \leq C\|\theta\|_{H^1(\O)}^2\|\w\|_{H^2(\O)}^2 + C\|\theta\|_{H^1(\O)}^2\|\Hbf\|_{H^2(\O)}^2
       + C\|\rho\|_{L^2(\O)}^2.     \label{rho-L2}
\end{align}
Adding \eqref{H1-estimate} and \eqref{H2-energy}-\eqref{rho-L2}, and making use of Gronwall's inequality, we conclude that $(\rho, \w)\in L^\infty(0,T_\xi;L^2(\O))\times L^\infty(0,T_\xi;\Wb)$, and satisfies
\begin{align}\label{H1:H2}
 \|\rho\|_{L^\infty(0,T_\xi;L^2(\O))}^2 +   \|\w\|_{L^\infty(0,T_\xi;\Wb)}^2 \leq C_{M} \|\h\|^2_{\Ucal}. 
\end{align}

\medskip
\noindent\textbf{ Weak time-derivative estimate for $\rho$.} From \eqref{eq:linearized:lift}, it is immediate to write that 
\begin{align}
 \|\partial_t\rho\|_{(H^1(\O))^{\prime}}
 &\leq C\left(\|\u\|_{L^\infty(\O)}\|\rho\|_{L^2(\O)}
             +\|\w+\Hbf\|_{L^\infty(\O)}\|\theta\|_{L^2(\O)}\right)     \notag\\
 &\leq C\left(\|\u\|_{H^2(\O)}\|\rho\|_{L^2(\O)}
             +\|\w+\Hbf\|_{H^2(\O)}\|\theta\|_{H^1(\O)}\right). \label{rhot-est}
\end{align}
Therefore, \eqref{H1:H2} gives $\partial_t\rho\in L^\infty(0,T_\xi;(H^1(\O))^{\prime})$. Consequently, $\rho\in C([0,T_\xi];(H^1(\O))^{\prime})$.
 
\medskip
\noindent\textbf{Weak time-derivative estimate for $\w$.}
From $\eqref{eq:linearized:lift}_2$ and \eqref{trilinear1}, we get  
\begin{align}
	& \|\partial_t\w\|^2_{\Vb^1(\O)} 
	\nonumber\\
	& =   - 2\nu(\Db(\w),\Db(\partial_t\w))  - \beta\int_\Gamma(\w\cdot\tau)(\partial_t\w\cdot\tau)dS 
	- b(\partial_t\w,\u,\Upsilon_\alpha(\w)) + b(\u,\partial_t\w, \Upsilon_\alpha(\w)) 
\notag\\
& \quad 	- b(\partial_t\w,\w,\Upsilon_\alpha(\u) ) + b(\w,\partial_t\w, \Upsilon_\alpha(\u))           
	+\xi(\rho e_2,\partial_t\w) 
	\notag\\
	& \quad  
	+  (- \partial_t\Upsilon_\alpha(\Hbf) + \nu\Delta\Hbf - \curl\Upsilon_\alpha(\Hbf) \times\u - \curl\Upsilon_\alpha(\u) \times\Hbf, \partial_t\w) 
	\nonumber\\
	& \leq   C\big[\|\w\|_{\Vb^1(\O)} + \|\w\|_{L^2(\Gamma)} + \|\u\|_{H^2(\O)} \|\w\|_{H^2(\O)}+ \|\Hbf\|_{H^3(\O)} \|\u\|_{H^2(\O)} + \|\u\|_{H^3(\O)} \|\Hbf\|_{H^2(\O)} 
	\nonumber\\
	& \quad + \|\rho\|_{L^2(\O)} + \|\partial_t\Hbf\|_{H^2(\O)} + \|\Hbf\|_{H^2(\O)} \big]\|\partial_t\w\|_{\Vb^1(\O)},  \label{weak-z}
\end{align}
which immediately gives $\partial_t\w\in L^{2}(0,T_\xi;\Vb^1(\O))$. Since system \eqref{eq:linearized} is linear, the uniqueness follows from \eqref{H1:H2}. This completes the proof.
\end{proof}

\section{Stability results}%
\setcounter{equation}{0}\label{sec:stability}

This section's primary goal is to prove a  stability result for the solution of the state equation. This step is essential for studying the control-to-state mapping's directional derivative.  For any  $r >0$, let $(\theta,\u)$ be the solution of system \eqref{state:eq:state} and $(\theta_r,\u_r)$ be the solution of system \eqref{state:eq:state} with $\g$ replaced by $\g+r\h$ in the sense of Definition \ref{def:WS:state}, where $\g\in\Uad$ and $\h\in\Ucal_0$.   Define
\begin{equation}\label{stability:differences}
 \widebar\theta_r:=\theta_r-\theta,
 \qquad
 \widebar\u_r:=\u_r-\u.
\end{equation}
Then $(\widebar\theta_r,\widebar\u_r)$ satisfies
\begin{equation}\label{stability:difference:system}
\left\{
\begin{aligned}
 \partial_t\widebar\theta_r
 +\u_r\cdot\nabla\widebar\theta_r
 +\widebar\u_r\cdot\nabla\theta&=0,
 &&\text{in }(0,T_\xi)\times\O,\\
 \partial_t\Upsilon_\alpha(\widebar\u_r )-\nu\Delta\widebar\u_r
 +\curl\Upsilon_\alpha(\widebar\u_r )\times\u_r
  +\curl\Upsilon_\alpha(\u )\times\widebar\u_r +\nabla\widebar p_r
 &=\xi\widebar\theta_re_2,
 &&\text{in }(0,T_\xi)\times\O,\\
 \diver\widebar\u_r&=0,
 &&\text{in }(0,T_\xi)\times\O,\\
\widebar\theta_r(0)=0,
 \qquad \widebar\u_r(0) & =\boldsymbol{0} ,
 &&\text{in }\O,\\
 \widebar\u_r\cdot\n=0,
 \qquad
 [2\nu\n\cdot\Db(\widebar\u_r)+\beta\widebar\u_r]
 \cdot\tau&=r\h\cdot\tau,
 &&\text{on }(0,T_\xi)\times \Gamma.
\end{aligned}
\right.
\end{equation}

In the next lemma, we present the main result of this section.

\begin{lemma}[Lipschitz stability]\label{thm:stability}
	We have the following stability estimates:
\begin{itemize}
	\item [$(i)$] there exists a constant $C_M>0$, independent of $r$, such that
	\begin{align}
		&\|\theta_r-\theta\|_{L^\infty(0,T_\xi;L^2(\O))}^2
		+\|\u_r-\u\|_{L^\infty(0,T_\xi;H^2(\O))}^2
		\leq C_Mr^2\|\h\|_{\Ucal}^2.                              \label{stability:final}
	\end{align}

\item [$(ii)$]  as $r\to0$, we have 
\begin{align}\label{goal:H1}
	\theta_r-\theta \to 0 \text{ in } L^\infty(0,T_\xi;H^1(\O)).
\end{align}
\end{itemize}
\end{lemma}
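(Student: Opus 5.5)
For part $(i)$ I would mimic the proof of Lemma~\ref{lem:linearized}, since \eqref{stability:difference:system} has the same linear structure as \eqref{eq:linearized} with boundary datum $r\h$. The only differences are that the transporting velocity is $\u_r$ in the scalar equation and in one curl term. Set $\Hbf=\mathcal R\h$, so that $\Hbf(0)=0$ because $\h\in\Ucal_0$. Define $\w_r:=\widebar\u_r-r\Hbf$; then $\w_r$ satisfies homogeneous Navier-slip conditions and $\w_r(0)=0$. Its equation carries the forcing
\[
\xi\widebar\theta_re_2-r\partial_t\Upsilon_\alpha(\Hbf)+r\nu\Delta\Hbf-r\curl\Upsilon_\alpha(\Hbf)\times\u_r-r\curl\Upsilon_\alpha(\u)\times\Hbf .
\]
The coefficients $\u,\u_r$ are bounded in $L^\infty(0,T_\xi;H^3(\O))$ by $C_M$, uniformly in $r$. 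This holds because the state estimates in Theorem~\ref{Wellposednes:state:lift} depend only on $\|\g+r\h\|_{\Ucal}$ and the initial data. (If $\g+r\h$ leaves $\Uad$, I would restrict to $r\le1$ and enlarge $M$ to $M+\|\h\|_{\Ucal}$; $T_\xi$ must then be taken with this enlarged bound, or one simply notes the estimates only use a bound on the $\Ucal$-norm.) I would then run, verbatim, three estimates. First, the $\Vb^1$ energy estimate, using $\eqref{curl:estimates}_3$ for $(\curl\Upsilon_\alpha(\w_r)\times\u_r,\w_r)$ and the antisymmetry $\eqref{trilinear1}_2$ to kill $(\curl\Upsilon_\alpha(\u)\times\w_r,\w_r)$. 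Second, the $H^2$ estimate, testing with $\Y=\mathbb P\Upsilon_\alpha(\w_r)$ and using Lemma~\ref{lem:1.6}. Third, the $L^2$ estimate for $\widebar\theta_r$. In that last estimate the term $(\u_r\cdot\nabla\widebar\theta_r,\widebar\theta_r)$ vanishes, and $(\widebar\u_r\cdot\nabla\theta,\widebar\theta_r)$ is bounded by $\|\widebar\u_r\|_{H^2}\|\theta\|_{H^1}\|\widebar\theta_r\|_{L^2}$. This requires $\theta\in L^\infty(0,T_\xi;H^1)$, which is available under $\theta_0\in H^1(\O)$ as in Lemma~\ref{lem:linearized}. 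All source terms are of order $r\|\Hbf\|$. Gronwall's inequality with zero initial data then gives $\|\widebar\theta_r\|^2_{L^\infty L^2}+\|\w_r\|^2_{L^\infty \Wb}\le C_Mr^2\|\h\|^2_{\Ucal}$. Adding back $r\Hbf$, using $\|\Hbf\|_{L^\infty H^2}\le C\|\h\|_{\Ucal}$ (from $\Hbf\in H^1(0,T;H^3)$ with $\Hbf(0)=0$), yields \eqref{stability:final}.

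For part $(ii)$ no rate is expected, because the transport equation has no smoothing and differentiating $\widebar\theta_r$ would need $\theta\in H^2$. I would use a regularization/compactness-of-data argument in the spirit of DiPerna–Lions stability. Fix $\eps>0$ and choose $\theta_0^\delta\in H^2(\O)$ with $\|\theta_0-\theta_0^\delta\|_{H^1}<\eps$ (Neumann eigenfunction truncation). Let $\theta^\delta,\theta^\delta_r$ be the transported solutions by $\u$ and $\u_r$ from $\theta_0^\delta$ (velocities held fixed, for $\xi=1$ too). Then split
\[
\theta_r-\theta=(\theta_r-\theta_r^\delta)+(\theta_r^\delta-\theta^\delta)+(\theta^\delta-\theta).
\]
Each linear transport equation is $H^1$-stable in its initial data, with the constant $\exp(\int_0^{T_\xi}\|\nabla\u_r\|_{L^\infty})\le e^{C_M}$ from \eqref{theta:H1:estimate}. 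So the first and last pieces are $\le C\eps$ uniformly in $r$. For $\xi=1$, however, $\theta_r$ and $\theta$ themselves feed back into the velocities. I would therefore define the $\delta$-solutions by transporting with the \emph{given} $\u_r,\u$, so the splitting remains linear.

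For the middle piece, $\theta^\delta,\theta^\delta_r$ stay bounded in $H^2$. The $H^2$ estimate for transport needs $\nabla^2\u\in L^\infty$-type control, and $H^3\hookrightarrow W^{2,p}$ for all $p<\infty$ suffices, via $\|\nabla^2\u\,\nabla\theta\|_{L^2}\le\|\u\|_{W^{2,4}}\|\nabla\theta\|_{L^4}$. The difference solves $\partial_t d+\u_r\cdot\nabla d=-\widebar\u_r\cdot\nabla\theta^\delta$. An $H^1$ energy estimate then gives
\[
\tfrac{d}{dt}\|d\|_{H^1}^2\lesssim\|\u_r\|_{H^3}\|d\|_{H^1}^2+\|\widebar\u_r\|_{H^2}\|\theta^\delta\|_{H^2}\|d\|_{H^1}.
\]
Hence $\|d\|_{L^\infty H^1}\le C_\delta r\|\h\|_{\Ucal}$ by part $(i)$. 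Letting $r\to0$ and then $\eps\to0$ proves \eqref{goal:H1}.

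The main obstacle is this middle step: justifying the $H^2$ propagation for $\theta^\delta$ with only $\u\in L^\infty H^3$, where the bound must be uniform in $r$. I would first try the $W^{2,4}$ route above, carried out on smooth approximations as in the paper's remark on differentiating the scalar equation twice.
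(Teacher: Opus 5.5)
Your proposal is correct and follows the paper's proof essentially step for step. In $(i)$ you use the same lifting $\widebar\u_r-r\Hbf$ and the same three estimates (the $\Vb^1$ energy estimate, the $H^2$ estimate tested with $\mathbb P\Upsilon_\alpha$, and the scalar $L^2$ estimate), closed by Gronwall; in $(ii)$ you use the same $H^2$ approximation of $\theta_0$, the same three-term splitting with auxiliary linear transports by the fixed velocities $\u_r,\u$, and the same $H^2$ bound on the smooth transported datum. The only deviation is that you bound the forcing $\widebar\u_r\cdot\nabla\theta^\delta$ in $H^1$ directly by $\|\widebar\u_r\|_{H^2}\|\theta^\delta\|_{H^2}$, which is valid in two dimensions; this skips the paper's interpolation to $W^{1,\infty}$ and even gives an $O(r)$ rate for the middle term.
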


\begin{proof}

The proof of $(i)$ is divided into the following three steps. Let us define $\widebar\v_r:= \widebar\u_r- r\Hbf$. Then $(\widebar\theta_r,\widebar\v_r)$ satisfies
\begin{equation}\label{stability:difference:system:vr}
\small	\left\{
	\begin{aligned}
		\partial_t\widebar\theta_r
		+\u_r\cdot\nabla\widebar\theta_r
		+(\widebar\v_r + r\Hbf)\cdot\nabla\theta&=0,
		&&\text{in }(0,T_\xi)\times\O,\\
		\partial_t\Upsilon_\alpha(\widebar\v_r + r\Hbf )-\nu\Delta(\widebar\v_r + r\Hbf)
		+\curl\Upsilon_\alpha(\widebar\v_r + r\Hbf )\times\u_r
		&+\curl\Upsilon_\alpha(\u )\times(\widebar\v_r + r\Hbf) \\ 
		+\nabla\widebar p_r
		&=\xi\widebar\theta_re_2,
		&&\text{in }(0,T_\xi)\times\O,\\
		\diver\widebar\v_r&=0,
		&&\text{in }(0,T_\xi)\times\O,\\
	\widebar\theta_r(0)=0,	
		\qquad \widebar\v_r(0) & =\boldsymbol{0},
		&&\text{in }\O,\\
		\widebar\v_r\cdot\n=0,
		\qquad
		[2\nu\n\cdot\Db(\widebar\v_r)+\beta\widebar\v_r]
		\cdot\tau&=0,
		&&\text{on }(0,T_\xi)\times \Gamma.
	\end{aligned}
	\right.
\end{equation}

\medskip
\noindent\textit{Step 1: the $H^1$-estimate.}
Taking the inner product of $\eqref{stability:difference:system:vr}_2$ with $\widebar\v_r$, using the boundary
condition, and arguing as in the proof of Lemma \ref{lem:linearized} gives
\begin{align}
 \frac12\frac{d}{dt}\|\widebar\v_r\|_{\Vb^1(\O)}^2
 +2\nu\|\Db(\widebar\v_r)\|_{L^2(\O)}^2
 +\beta\|\widebar\v_r\cdot\tau\|_{L^2(\Gamma)}^2
 &\leq C(1+\|\u_r\|_{H^3(\O)})
        \|\widebar\v_r\|_{\Vb^1(\O)}^2  +   C\xi\|\widebar\theta_r\|_{L^2(\O)}^2
    \notag\\  &\quad    +Cr^2\|\h\|_{H^{3/2}(\Gamma)}^2  + Cr^2\|\partial_t\h\|_{H^{1/2}(\Gamma)}^2.\label{stability:H1}
\end{align}

\medskip
\noindent\textit{Step 2: the $H^2$-estimate.}
Set $\widebar\Y_r:=\mathbb{P}\Upsilon_\alpha(\widebar\v_r ).$ Taking the inner product of $\eqref{stability:difference:system:vr}_2$ with $\widebar\Y_r$, and using  $\curl\Upsilon_\alpha(\widebar\v_r ) =\curl\widebar\Y_r$, \eqref{lem:1.6:1},  \eqref{H2-estimate} and Young's inequality, we obtain
\begin{align}
 \frac12\frac{d}{dt}\|\widebar\Y_r\|_{L^2(\O)}^2
&\leq C(1+\|\u_r\|_{H^3(\O)}^2+\|\u\|_{H^3(\O)}^2)
 \left(
 \|\widebar\Y_r\|_{L^2(\O)}^2
 +\|\widebar\u_r\|_{\Vb^1(\O)}^2
 \right) +C\xi\|\widebar\theta_r\|_{L^2(\O)}^2
\nonumber\\
& \quad + Cr^2\|\h\|_{H^{3/2}(\Gamma)}^2  + Cr^2\|\partial_t\h\|_{H^{1/2}(\Gamma)}^2.                  \label{stability:H2}
\end{align}

\medskip
\noindent\textit{Step 3: the scalar estimate.}
From $\eqref{stability:difference:system}_1$, we have
\begin{align}
 \frac12\frac{d}{dt}\|\widebar\theta_r\|_{L^2(\O)}^2
 &=-(\widebar\u_r\cdot\nabla\theta,\widebar\theta_r)  
 \leq C\|\theta\|_{H^1(\O)}^2
         \|\widebar\v_r\|_{H^2(\O)}^2 + C r^2 \|\theta\|_{H^1(\O)}^2
         \|\Hbf\|_{H^2(\O)}^2
       +C\|\widebar\theta_r\|_{L^2(\O)}^2.                     \label{stability:theta}
\end{align}
Adding \eqref{stability:H1}, \eqref{stability:H2}, and
\eqref{stability:theta}, using \eqref{v:H3:estimate}-\eqref{theeta:H1:estimate} (for $\xi=0$) and
\eqref{v:theta:estimate} (for $\xi=1$), and then applying Gronwall's lemma proves
\eqref{stability:final}.

Finally, we show part $(ii)$. The estimate \eqref{stability:final}, together with the uniform $H^3$-bound, implies by interpolation that, for every
$0<\delta<1$,
\begin{equation*}
 \u_r-\u\to 0
 \quad\text{in }
 L^\infty(0,T_\xi;H^{2+\delta}(\O)).
\end{equation*}
In particular, by Sobolev embedding, we have
\begin{equation}\label{stability:W1infty}
 \u_r-\u\to0
 \quad\text{in }
 L^\infty(0,T_\xi;W^{1,\infty}(\O)).
\end{equation}

 Next, we prove that the convergence \eqref{stability:W1infty}   implies
\begin{align}
    \|\theta_r-\theta\|_{L^\infty(0,T_\xi;H^1(\O))}
 \to0.
\end{align}

Let us choose a sequence
\begin{equation}\label{smooth:initial:data}
 \theta_0^m\in H^2(\O),
 \qquad
 \theta_0^m\to\theta_0
 \quad\text{strongly in }H^1(\O).
\end{equation}
For the already fixed state velocities $\u_r$ and $\u$, define the auxiliary
transport solutions by
\begin{align}
 \partial_t\theta_r^m+\u_r\cdot\nabla\theta_r^m&=0,
 &\theta_r^m(0)&=\theta_0^m,                              \label{auxiliary:r}\\
 \partial_t\theta^m+\u\cdot\nabla\theta^m&=0,
 &\theta^m(0)&=\theta_0^m.                               \label{auxiliary:base}
\end{align}
These are only auxiliary scalar transport problems with the state velocities
held fixed; they are not asserted to be new solutions of the coupled state
system.

The functions $\theta_r-\theta_r^m$ and $\theta-\theta^m$ solve homogeneous
transport equations with initial value $\theta_0-\theta_0^m$.  From
\eqref{v:H3:estimate}-\eqref{theeta:H1:estimate} (for $\xi=0$) and
\eqref{v:theta:estimate} (for $\xi=1$), and
$H^3(\O)\hookrightarrow W^{1,\infty}(\O)$, we obtain, uniformly in $r$,
\begin{align}
 \|\theta_r-\theta_r^m\|_{L^\infty(0,T_\xi;H^1(\O))}
 &\leq C_M\|\theta_0-\theta_0^m\|_{H^1(\O)},                 \label{approximation:r}\\
 \|\theta-\theta^m\|_{L^\infty(0,T_\xi;H^1(\O))}
 &\leq C_M\|\theta_0-\theta_0^m\|_{H^1(\O)}.               \label{approximation:base}
\end{align}

Next observe that, for each fixed $m$, $\theta^m$ remains bounded in $H^2(\O)$.  Indeed, we have (for example see the proof of Lemma \ref{thm:adjoint:regularity} below)
\begin{align*}
 \frac{d}{dt}\|\theta^m\|_{H^2(\O)}^2
 \leq C(1+\|\u\|_{H^3(\O)})\|\theta^m\|_{H^2(\O)}^2,
\end{align*}
and therefore
\begin{equation}\label{theta:m:H2}
 \|\theta^m\|_{L^\infty(0,T_\xi;H^2(\O))}
 \leq C_M\|\theta_0^m\|_{H^2(\O)}=:K_m.
\end{equation}

Define $ q_r^m:=\theta_r^m-\theta^m.$ Subtracting \eqref{auxiliary:base} from \eqref{auxiliary:r}, we obtain
\begin{equation}\label{equation:qrm}
 \partial_tq_r^m+\u_r\cdot\nabla q_r^m=\underbrace{-(\u_r-\u)\cdot\nabla\theta^m}_{=:F_r^m},
 \qquad q_r^m(0)=0,
\end{equation}
such that
\begin{align}
 \|F_r^m\|_{H^1(\O)}
 &\leq C\|\u_r-\u\|_{W^{1,\infty}(\O)}
          \|\theta^m\|_{H^2(\O)}.                            \label{forcing:H1}
\end{align}

Taking inner product of \eqref{equation:qrm} and its spatial derivatives by $q_r^m$ and
$\nabla q_r^m$, respectively, gives
\begin{align*}
 \frac{d}{dt}\|q_r^m\|_{H^1(\O)}^2
 &\leq C\bigl(1+\|\nabla\u_r\|_{L^\infty(\O)}\bigr)
          \|q_r^m\|_{H^1(\O)}^2
       +C\|F_r^m\|_{H^1(\O)}^2.                          
\end{align*}
Using \eqref{v:H3:estimate}-\eqref{theeta:H1:estimate} (for $\xi=0$),
\eqref{v:theta:estimate} (for $\xi=1$), \eqref{stability:W1infty},
\eqref{theta:m:H2}, and \eqref{forcing:H1}, Gronwall's inequality gives
\begin{align*}
 \|q_r^m\|_{L^\infty(0,T_\xi;H^1(\O))}^2
 &\leq C_M\int_0^{T_\xi}\|F_r^m(s)\|_{H^1(\O)}^2\,ds 
 \leq C_MT_\xi\|\u_r-\u\|_{L^\infty(0,T_\xi;W^{1,\infty}(\O))}^2K_m^2.
\end{align*}
Consequently, for every fixed $m$,
\begin{equation}\label{fixed:m:convergence}
 \|\theta_r^m-\theta^m\|_{L^\infty(0,T_\xi;H^1(\O))}
 \leq C_M\sqrt{T_\xi}\,K_m\varepsilon_r
 \to0
 \qquad\text{as }r\to0,
\end{equation}
where $\varepsilon_r:=\|\u_r-\u\|_{L^\infty(0,T_\xi;W^{1,\infty}(\O))}$.

\vskip 2mm
\noindent

Let $\eta>0$ be arbitrary. From \eqref{approximation:r},
\eqref{approximation:base}, and \eqref{fixed:m:convergence}, we have
\begin{align}
 \|\theta_r-\theta\|_{L^\infty(0,T_\xi;H^1(\O))}
 &\leq
 \|\theta_r-\theta_r^m\|_{L^\infty(0,T_\xi;H^1(\O))}
 +\|\theta_r^m-\theta^m\|_{L^\infty(0,T_\xi;H^1(\O))}
 +\|\theta^m-\theta\|_{L^\infty(0,T_\xi;H^1(\O))}
 \notag\\
 &\leq
 2C_M\|\theta_0-\theta_0^m\|_{H^1(\O)}
 +C_M\sqrt{T_\xi}\,K_m\varepsilon_r.
 \label{ordered:estimate}
\end{align}

Since
\[
\theta_0^m\to\theta_0
\qquad\text{strongly in }H^1(\O),
\]
we can first choose $m=m(\eta)$ sufficiently large such that
\begin{equation*}
 2C_M\|\theta_0-\theta_0^m\|_{H^1(\O)}
 <\frac{\eta}{2}.
\end{equation*}
For this fixed $m$, $K_m$ is finite and independent of $r$. Since
$\varepsilon_r\to0$ as $r\to0$, we can choose
$r=r(\eta,m)$ sufficiently small such that
\begin{equation*}
 C_M\sqrt{T_\xi}\,K_m\varepsilon_r
 <\frac{\eta}{2}.
\end{equation*}
Therefore, for all sufficiently small $r$,
\begin{equation*}
 \|\theta_r-\theta\|_{L^\infty(0,T_\xi;H^1(\O))}
 <\eta.
\end{equation*}
Since $\eta>0$ was arbitrary, we conclude that
\begin{equation*}
 \|\theta_r-\theta\|_{L^\infty(0,T_\xi;H^1(\O))}
 \to0
 \qquad\text{as }r\to0.
\end{equation*}
This proves \eqref{goal:H1} and this completes the proof.
\end{proof}

\section{Directional differentiability of the control-to-state mapping}%
\setcounter{equation}{0}\label{sec:gateaux}

The differentiability of the control-to-state mapping is examined in this section. More specifically, we will demonstrate that the solution of the linearized system \eqref{eq:linearized} provides the directional derivative of the control-to-state mapping using the stability property defined in the preceding section.

	Because $\Uad$ is a closed subset of the affine space
$\Ucal_{\g_{\rm in}}$, differentiability is understood along feasible
directions. For $\g\in\Uad$, define
\begin{align*}
	\mathcal T_{\Uad}(\g)
	:=\left\{\h\in\Ucal_0:
	\g+r\h\in\Uad\text{ for all sufficiently small }r>0\right\}.
\end{align*}

 Define the difference quotients
\begin{equation*}
 \rho_r:=\frac{\theta_r-\theta}{r} \;\; \text{ and } \;\;
 \z_r:=\frac{\u_r-\u}{r}.
\end{equation*}
In view of Lemma \ref{thm:stability}, we have the following uniform estimate:
\begin{equation}\label{quotient:uniform}
 \sup_{0<r<r_0}\left(
 \|\rho_r\|_{L^\infty(0,T_\xi;L^2(\O))}
 +\|\z_r\|_{L^\infty(0,T_\xi;H^2(\O))}
 \right)\leq C_M\|\h\|_{\Ucal}.
\end{equation}
Define new variables  
\begin{equation*}
 \sigma_r:=\rho_r-\rho
 =\frac{\theta_r-\theta}{r}-\rho \;\; \text{ and }
 \;\;
 \chi_r:=\z_r-\z
 =\frac{\u_r-\u}{r}-\z,
\end{equation*}
where $(\rho,\z)$ satisfies system \eqref{eq:linearized}. Then, the pair $(\sigma_r, \chi_r)$ satisfies the following system:
\begin{equation}\label{error:system}
\left\{
\begin{aligned}
 \partial_t\sigma_r+\u\cdot\nabla\sigma_r
 +\chi_r\cdot\nabla\theta
 &=-\z_r\cdot\nabla(\theta_r-\theta), &&\text{in }(0,T_\xi)\times\O,\\
 \partial_t\Upsilon_\alpha(\chi_r)-\nu\Delta\chi_r
 &+\curl\Upsilon_\alpha(\chi_r)\times\u
 +\curl\Upsilon_\alpha(\u)\times\chi_r\\ 
 +\nabla\pi_r
 &=\xi\sigma_re_2  -  r\curl\Upsilon_\alpha(\z_r)\times\z_r, &&\text{in }(0,T_\xi)\times\O,\\
 \diver\chi_r&=0, &&\text{in }(0,T_\xi) \times\O,\\
   \sigma_r(0) =0, \quad  \chi_r(0)&=\boldsymbol{0}, &&\text{in }\O,\\
 \chi_r\cdot\n=0,\qquad&
 [2\nu\n\cdot\Db(\chi_r)+\beta\chi_r]\cdot\tau=0 &&\text{on }(0,T_\xi)\times \Gamma.
\end{aligned}
\right.
\end{equation}

\begin{theorem}[Directional differentiability]
	\label{thm:Gateaux}
	Let $\g\in\Uad$ and $\h\in\mathcal T_{\Uad}(\g)$.  Let
	$(\theta_r,\u_r)$ and $(\theta,\u)$ be the states corresponding to
	$\g+r\h$ and $\g$, respectively, and let $(\rho,\z)$ solve the linearized
	system \eqref{eq:linearized}.  Then, as $r\downarrow0$,
	\begin{align}
		\frac{\theta_r-\theta}{r}&\to\rho
		&&\text{in }L^\infty(0,T_\xi;L^2(\O))
		\cap C([0,T_\xi];(H^1(\O))'),\label{eq:directional-theta}\\
		\frac{\u_r-\u}{r}&\to\z
		&&\text{in }L^\infty(0,T_\xi;\Vb^1(\O)).
		\label{eq:directional-u}
	\end{align}
	In particular,
	\[
	\left\|
	\frac{\theta(T_\xi;\g+r\h)-\theta(T_\xi;\g)}{r}
	-\rho(T_\xi)
	\right\|_{(H^1(\O))'}\to0.
	\]
\end{theorem}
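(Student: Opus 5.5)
Our approach is energy estimates on the error system \eqref{error:system} for $(\sigma_r,\chi_r)$, combined with Gronwall's lemma. The aim is to show that the right-hand sides of \eqref{error:system}, namely $-\z_r\cdot\nabla(\theta_r-\theta)$ and $-r\curl\Upsilon_\alpha(\z_r)\times\z_r$, tend to zero in suitable norms. Since $\chi_r(0)=\boldsymbol 0$, $\sigma_r(0)=0$, and $\chi_r$ satisfies the homogeneous Navier-slip condition, no lifting is needed at this stage. The lifting $\Hbf=\mathcal R\h$ has already been absorbed, because both $\z_r$ and $\z$ carry the same boundary datum $\h\cdot\tau$. The feasibility $\h\in\mathcal T_{\Uad}(\g)$ ensures that $\g+r\h\in\Uad$, so Lemma~\ref{thm:stability} and the uniform bounds of Theorem~\ref{Wellposednes:state} apply with constants $C_M$ independent of $r$.

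\textbf{Step 1: velocity estimate.} We test the momentum equation of \eqref{error:system} with $\chi_r$, exactly as in \eqref{Uni:1}. The term $(\curl\Upsilon_\alpha(\u)\times\chi_r,\chi_r)$ vanishes by \eqref{trilinear1}. The term $(\curl\Upsilon_\alpha(\chi_r)\times\u,\chi_r)$ is bounded by $C\|\u\|_{H^3}\|\chi_r\|_{\Vb^1}^2$ using \eqref{curl:estimates}$_3$. For the forcing term we use \eqref{trilinear1}$_2$ to write
\[
r(\curl\Upsilon_\alpha(\z_r)\times\z_r,\chi_r)=r\bigl[b(\chi_r,\z_r,\Upsilon_\alpha(\z_r))-b(\z_r,\chi_r,\Upsilon_\alpha(\z_r))\bigr],
\]
which is bounded by $Cr\|\z_r\|_{H^2}^2\|\chi_r\|_{H^1}\le Cr\|\h\|_{\Ucal}^2\|\chi_r\|_{\Vb^1}$ via \eqref{quotient:uniform}. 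This yields
\[
\frac{d}{dt}\|\chi_r\|_{\Vb^1}^2\le C(1+\|\u\|_{H^3})\|\chi_r\|_{\Vb^1}^2+C\xi\|\sigma_r\|_{L^2}^2+Cr^2\|\h\|_{\Ucal}^4 .
\]

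\textbf{Step 2: scalar estimate, the main obstacle.} We test the transport equation of \eqref{error:system} with $\sigma_r$. The advection term drops. The coupling term is bounded by $\|\chi_r\|_{L^\infty}\|\nabla\theta\|_{L^2}\|\sigma_r\|_{L^2}$. This would require $\chi_r$ in $H^2$, which Step 1 does not provide. We therefore also perform the $H^2$ estimate with $\Y_r=\mathbb P\Upsilon_\alpha(\chi_r)$, as in \eqref{H2-energy}. There the forcing is bounded by $r\|\z_r\|_{H^3}\|\z_r\|_{H^2}$, which needs an $H^3$ bound on $\z_r$ that is unavailable. If that fails, the fallback is to use \eqref{infty:inequality} with $\|\chi_r\|_{H^2}\le C$ from \eqref{quotient:uniform} and Lemma~\ref{lem:linearized}, and run the $\eps$-Osgood argument from the uniqueness proof. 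The source term is bounded by $\|\z_r\|_{L^\infty}\|\nabla(\theta_r-\theta)\|_{L^2}\|\sigma_r\|_{L^2}\le C\|\h\|_{\Ucal}\,\|\theta_r-\theta\|_{L^\infty H^1}\|\sigma_r\|_{L^2}$. By Lemma~\ref{thm:stability}(ii), this tends to zero \emph{without a rate}, which is precisely why the full $H^1$ convergence \eqref{goal:H1} was proved. For $\xi=0$ the velocity is decoupled, so Step 1 alone gives $\chi_r\to0$ in $L^\infty\Vb^1$. Since $\|\chi_r\|_{H^2}$ is bounded, interpolation gives $\chi_r\to0$ in $L^\infty H^{2-\delta}\subset L^\infty L^\infty$, and the scalar Gronwall then closes with forcing $o(1)$. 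For $\xi=1$, we set $U_r=\|\chi_r\|_{\Vb^1}^2+\|\sigma_r\|_{L^2}^2$. We bound $\|\chi_r\|_{L^\infty}\le C_\eps\|\chi_r\|_{H^1}^{1-\eps}$ using the uniform $H^2$ bound, and obtain
\[
U_r'\le \frac{K}{\sqrt\eps}\bigl(U_r+U_r^{1-\eps/2}\bigr)+o(1).
\]
The comparison argument from the uniqueness proof then yields $\sup_t U_r\to0$, first choosing $\eps$ small and then $r$ small.

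\textbf{Step 3: conclusion.} This gives \eqref{eq:directional-u} and $\sigma_r\to0$ in $L^\infty(0,T_\xi;L^2)$. For the $C([0,T_\xi];(H^1)')$ part, the estimate \eqref{rhot-est} applied to $\sigma_r$ shows that $\sigma_r$ is continuous in time with values in $(H^1)'$. The continuous embedding $L^2\hookrightarrow(H^1)'$, together with the pointwise-in-time $L^2$ bound that holds for every $t$ by continuity, gives uniform convergence in $(H^1)'$. Evaluating at $t=T_\xi$ yields the final claim.
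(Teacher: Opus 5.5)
Your proof is correct and shares the paper's skeleton: the $\Vb^1$ estimate for $\chi_r$ with the quadratic remainder controlled by \eqref{quotient:uniform}, the $L^2$ estimate for $\sigma_r$, and the interpolation inequality \eqref{infty:inequality} fed by the uniform $H^2$ bound on $\chi_r$. You also use Lemma~\ref{thm:stability}(ii) to make the source $\z_r\cdot\nabla(\theta_r-\theta)$ small without a rate, and continuity with values in $(H^1(\O))'$ at the end.

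The difference lies in how the $\varepsilon$-Gronwall step is closed. The paper first applies Young's inequality to $\|\chi_r\|_{L^\infty}\|\nabla\theta\|_{L^2}\|\sigma_r\|_{L^2}$, which produces $\frac{C_1}{\varepsilon}\mathbf F_r^{1-\varepsilon}$. After multiplying by $\varepsilon\mathbf F_r^{\varepsilon-1}$, a non-small constant $C_1$ remains. The paper therefore has to work on subintervals of length $\tau_0$ with $C_1\tau_0<\frac14$, choose the $r$-dependent exponent $\varepsilon_r=(\log(1/q_r))^{-1/2}$, and induct over the subintervals. In return it gets an explicit, if weak, modulus of convergence in terms of $q_r$.

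You keep the product and bound it by $\frac{C}{\sqrt\varepsilon}U_r^{1-\varepsilon/2}$, as in the uniqueness proof. The equation for $U^{\varepsilon/2}$ then carries the small constant $K\sqrt\varepsilon$, so one global estimate on $[0,T_\xi]$ suffices, with the limits taken in the order $\varepsilon$ first, then $r$. Your separate treatment of $\xi=0$ by decoupling is also simpler, and it even gives $\|\chi_r\|_{L^\infty(0,T_\xi;\Vb^1(\O))}=O(r)$.

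The one step you should write out is the passage from the unforced, zero-data uniqueness argument to your forced inequality. Set $\delta_r:=C\bigl(r^2+\|\theta_r-\theta\|_{L^\infty(0,T_\xi;H^1(\O))}^2\bigr)$ and $V_r:=U_r+\delta_r$. Taking $K\geq1$ and $\varepsilon\leq1$, you get $V_r'\leq\frac{2K}{\sqrt\varepsilon}\bigl(V_r+V_r^{1-\varepsilon/2}\bigr)$, hence
\[
1+V_r(t)^{\varepsilon/2}\leq\bigl(1+\delta_r^{\varepsilon/2}\bigr)e^{K\sqrt\varepsilon\,t}.
\]
This gives $\limsup_{r\to0}\sup_tU_r\leq\bigl(e^{K\sqrt\varepsilon T_\xi}-1\bigr)^{2/\varepsilon}$ for every $\varepsilon\in(0,1]$, and the right-hand side tends to $0$ as $\varepsilon\to0$. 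Since $V_r\geq\delta_r>0$, the chain rule for $V_r^{\varepsilon/2}$ is legitimate; this is the same reason the paper adds $r^2$ to $\mathbf F_r$. Your exploratory remarks about the $H^2$-estimate via $\mathbb P\Upsilon_\alpha(\chi_r)$ can simply be dropped.
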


\begin{proof}
\noindent\textbf{$\Vb^1$-estimate for $\chi_r$.}
Taking the inner product of $\eqref{error:system}_2$ with
$\chi_r$, \eqref{trilinear1} and the homogeneous boundary condition gives
\begin{align}
 & \frac12\frac{d}{dt}\|\chi_r\|_{\Vb^1(\O)}^2
 +2\nu\|\Db(\chi_r)\|_{L^2(\O)}^2
 +\beta\|\chi_r\cdot\tau\|_{L^2(\Gamma)}^2 \nonumber\\
 & = -(\curl\Upsilon_\alpha(\chi_r )\times\u,\chi_r) - r (\curl\Upsilon_\alpha(\z_r) \times\z_r,\chi_r) + \xi(\sigma_re_2,\chi_r).                                      \label{error:chi:H1:first}
\end{align}
The inequality $\eqref{curl:estimates}_3$ implies 
\begin{align}
    |(\curl\Upsilon_\alpha(\chi_r )\times\u,\chi_r)|\leq C\|\u\|_{H^3(\O)}\|\chi_r\|_{\Vb^1(\O)}^2 \leq  C \|\chi_r\|_{\Vb^1(\O)}^2.
\end{align}
The identity $\eqref{trilinear1}_2$, Sobolev embedding and \eqref{quotient:uniform} give
\begin{align}
 |r(\curl\Upsilon_\alpha(\z_r)\times\z_r,\chi_r)|
 &\leq
 r\left(
 \|\chi_r\|_{L^4(\O)}\|\nabla\z_r\|_{L^4(\O)}
 +\|\z_r\|_{L^\infty(\O)}\|\nabla\chi_r\|_{L^2(\O)}
 \right)\|\Upsilon_\alpha(\z_r)\|_{L^2(\O)}                               \notag\\
 &\leq C r\|\z_r\|_{H^2(\O)}^2\|\chi_r\|_{\Vb^1(\O)}
 \nonumber\\
 & \leq C_M (r^2+\|\chi_r\|^2_{\Vb^1(\O)}).             \label{quadratic:remainder}
\end{align}
An application of Cauchy-Schwarz and Young's inequalities estimates
\begin{align}\label{theta:r}
|\xi(\sigma_re_2,\chi_r)|\leq C (\xi\|\sigma_r\|_{L^2(\O)}^2 + \|\chi_r\|_{L^2(\O)}^2)
\end{align}
Therefore, by \eqref{error:chi:H1:first}-\eqref{theta:r}, we obtain
\begin{equation}\label{error:chi:H1}
 \frac{d}{dt}\|\chi_r\|_{\Vb^1(\O)}^2
 \leq C_M\left(
 \|\chi_r\|_{\Vb^1(\O)}^2+\xi\|\sigma_r\|_{L^2(\O)}^2+r^2
 \right).
\end{equation}

\medskip
\noindent\textbf{$L^2$-estimate for $\sigma_r$.}
The scalar error equation has deliberately been written in the form
\begin{equation*}
 \partial_t\sigma_r+\u\cdot\nabla\sigma_r
 +\chi_r\cdot\nabla\theta
 =-\z_r\cdot\nabla(\theta_r-\theta).
\end{equation*}
Taking the inner product with $\sigma_r$, and using
$\diver\u=0$ and $\u\cdot\n=0$, gives
\begin{align}
 \frac12\frac{d}{dt}\|\sigma_r\|_{L^2(\O)}^2
 &\leq \|\chi_r\|_{L^\infty(\O)}\|\nabla\theta\|_{L^2(\O)}
          \|\sigma_r\|_{L^2(\O)}   +\|\z_r\|_{L^\infty(\O)}
       \|\nabla(\theta_r-\theta)\|_{L^2(\O)}
       \|\sigma_r\|_{L^2(\O)}
       \nonumber\\
&\leq C \|\chi_r\|^2_{L^\infty(\O)}\|\nabla\theta\|^2_{L^2(\O)}
    + C \|\z_r\|^2_{L^\infty(\O)}
       \|\nabla(\theta_r-\theta)\|^2_{L^2(\O)} + C
       \|\sigma_r\|^2_{L^2(\O)}.                                  \label{sigma:direct:L2}
\end{align}
By \eqref{quotient:uniform}, there exists $K>1$, independent of $r$, such
that
\begin{equation}\label{chi:H2:uniform}
 \|\chi_r\|_{L^\infty(0,T_\xi;H^2(\O))}
 +\|\z_r\|_{L^\infty(0,T_\xi;H^2(\O))}\leq K.
\end{equation}
Use \eqref{error:chi:H1}, \eqref{sigma:direct:L2} and \eqref{infty:inequality} to obtain, for $0< \varepsilon \leq 1$,
\begin{align*}
	& \frac{d}{dt} \left[\|\sigma_r\|_{L^2(\O)}^2 + \|\chi_r\|_{\Vb^1(\O)}^2\right]
\nonumber\\
	&\leq  C_0 
	(\|\nabla(\theta_r-\theta)\|^2_{L^{\infty}(0,T_{\xi};L^2(\O))}+r^2 + \|\sigma_r\|^2_{L^2(\O)} +  \|\chi_r\|_{\Vb^1(\O)}^2) + \frac{C_1}{\varepsilon} \|\chi_r\|^{2-2\varepsilon}_{\Vb^1(\O)},
\end{align*}
where $C_0,C_1>0$ are positive constants independent of $r$ and $\varepsilon$.

Define $\mathbf{F}_r(t):= \|\nabla(\theta_r-\theta)\|^2_{L^{\infty}(0,T_{\xi};L^2(\O))}+r^2 + \|\sigma_r(t)\|_{L^2(\O)}^2 + \|\chi_r(t)\|_{\Vb^1(\O)}^2$,   then $\mathbf{F}_r(\cdot)$ satisfies, for $0< \varepsilon \leq 1$,
\begin{align}\label{Fr:differential}
	\frac{d}{dt}\mathbf F_r(t)
	\leq
	C_0\mathbf F_r(t)
	+\frac{C_1}{\varepsilon}
	\mathbf F_r(t)^{1-\varepsilon},
	\qquad
	\mathbf F_r(0)=\|\nabla(\theta_r-\theta)\|^2_{L^{\infty}(0,T_{\xi};L^2(\O))}+r^2:=q_r,
\end{align}	
for almost every $t\in(0,T_\xi)$. Since $\mathbf F_r\in AC([0,T_\xi])=W^{1,1}(0,T_\xi)$ and
$\mathbf F_r(t)\geq r^2>0$, the range of $\mathbf F_r$ is contained
in a compact subset of $(0,\infty)$. Hence the function
$\Phi(s)=s^\varepsilon$ is continuously differentiable on a
neighborhood of this range. The chain rule for one-dimensional Sobolev
functions \cite[Corollary~8.11]{Brezis2011} therefore gives
\[
\mathbf F_r^\varepsilon\in W^{1,1}(0,T_\xi),
\qquad
\frac{d}{dt}\mathbf F_r^\varepsilon
=
\varepsilon\mathbf F_r^{\varepsilon-1}\mathbf F_r'
\quad\text{a.e. in }(0,T_\xi).
\]	
Multiplying \eqref{Fr:differential} by
	$\varepsilon\mathbf F_r^{\varepsilon-1}$ gives
	\begin{align*}
		\frac{d}{dt}\mathbf F_r^\varepsilon
		&=
		\varepsilon\mathbf F_r^{\varepsilon-1}\mathbf F_r'
		 \leq
		C_0\varepsilon\mathbf F_r^\varepsilon+C_1.
	\end{align*}
	Therefore, for every $0\leq s\leq t\leq T_\xi$,
	\begin{align}\label{nonlinear:gronwall:interval}
		\mathbf F_r(t)
		\leq
		\left\{
		\mathbf F_r(s)^\varepsilon
		e^{C_0\varepsilon(t-s)}
		+
		C_1\int_s^t
		e^{C_0\varepsilon(t-\tau)}\,d\tau
		\right\}^{1/\varepsilon}.
	\end{align}

	Choose $\tau_0>0$ sufficiently small such that
	\begin{align}\label{tau:small}
		C_1\tau_0<\frac14.
	\end{align}
	Introduce
	\begin{align*}
		B_\varepsilon(\tau_0)
		:=
		C_1\int_0^{\tau_0}
		e^{C_0\varepsilon(\tau_0-\tau)}\,d\tau.
	\end{align*}
	We have
	\begin{align*}
		B_\varepsilon(\tau_0)
		=
		\frac{C_1}{C_0\varepsilon}
		\left(e^{C_0\varepsilon\tau_0}-1\right)
		\to C_1\tau_0<\frac14
		\qquad\text{ as } \quad \varepsilon\to0.
	\end{align*}
	
	We first consider the interval $[0,\tau_0]$. Since
	$\mathbf F_r(0)=q_r\to0$, for sufficiently small $r$ define
	\begin{align}\label{epsilon:r:choice}
		L_r:=\log\left(\frac{1}{q_r}\right) \;\; \text{ and }\;\;
		\varepsilon_r:=\frac{1}{\sqrt{L_r}}.
	\end{align}
	Then
	\begin{align}\label{qr:epsilon}
		\varepsilon_r\to0 \;\;\; \text{ and } \;\;\;
		[q_r]^{\varepsilon_r}
		=
		\exp\left(-\varepsilon_rL_r\right)
		=
		\exp\left(-\sqrt{L_r}\right)
		\to0.
	\end{align}
	Applying \eqref{nonlinear:gronwall:interval} with $s=0$ and
	$\varepsilon=\varepsilon_r$, we obtain
	\begin{align*}
		\sup_{0\leq t\leq\tau_0}\mathbf F_r(t)
		\leq
		\left\{
		[q_r]^{\varepsilon_r}
		e^{C_0\varepsilon_r\tau_0}
		+
		B_{\varepsilon_r}(\tau_0)
		\right\}^{1/\varepsilon_r}.
	\end{align*}
	By \eqref{tau:small} and \eqref{qr:epsilon}, for all sufficiently
	small $r$,
	\begin{align*}
		[q_r]^{\varepsilon_r}
		e^{C_0\varepsilon_r\tau_0}
		+
		B_{\varepsilon_r}(\tau_0)
		\leq\frac12.
	\end{align*}
	Consequently,
	\begin{align*}
		\sup_{0\leq t\leq\tau_0}\mathbf F_r(t)
		\leq
		\left(\frac12\right)^{1/\varepsilon_r}.
	\end{align*}
	Thus,
	\begin{align}\label{first:interval}
		\sup_{0\leq t\leq\tau_0}\mathbf F_r(t)
		\to0 \;\;\; \text{ as } \;\;\; r\to0.
	\end{align}

	To cover the whole interval $[0,T_\xi]$, choose an integer
	$N\in\mathbb N$ such that $T_\xi\leq N\tau_0,$ and define
	\begin{align*}
		t_k:=k\tau_0\wedge T_\xi,
		\qquad k=0,\ldots,N.
	\end{align*}
	Assume inductively that $\sup\limits_{0\leq t\leq t_k}\mathbf F_r(t)\to0.$  In particular,
	\begin{align*}
		\mathbf F_r(t_k)\to0.
	\end{align*}
	For sufficiently small $r$, choose
	\begin{align*}
		\varepsilon_{r,k}
		:=
		\left[
		\log\left(\frac{1}{\mathbf F_r(t_k)}\right)
		\right]^{-1/2}.
	\end{align*}
	Applying \eqref{nonlinear:gronwall:interval} on
	$[t_k,t_{k+1}]$ with $\varepsilon=\varepsilon_{r,k}$, exactly the
	same calculation gives
	\begin{align*}
		\sup_{t_k\leq t\leq t_{k+1}}\mathbf F_r(t)
		\to0 \;\;\; \text{ as } \;\;\; r\to0.
	\end{align*}
	Since $N$ is finite, induction gives
	\begin{align*}
		\sup_{0\leq t\leq T_\xi}\mathbf F_r(t)
		\to0 \;\;\; \text{ as } \;\;\; r\to0.
	\end{align*}
	Finally, $0\leq \|\sigma_r(t)\|_{L^2(\O)}^2 + \|\chi_r(t)\|_{\Vb^1(\O)}^2 \leq\mathbf F_r$, and therefore
	\begin{align*}
		\|\sigma_r\|_{L^\infty(0,T_\xi;L^2(\O))}
		+
		\|\chi_r\|_{L^\infty(0,T_\xi;\Vb^1(\O))}
		\to0.
	\end{align*}
Moreover, since $\sigma_{r}\in C([0,T_\xi];(H^{1}(\O))')$, we also have 
\begin{align}\label{error:strong:final:time}
	\|\sigma_r\|_{C([0,T_\xi];(H^{1}(\O))')}
	\to 0
	\qquad\text{as } \qquad r\to0.
\end{align}
This proves, in particular, convergence of the difference quotient at
$t=T_\xi$ in the topology of the terminal mix-norm. Hence, we complete the proof.
\end{proof}

\section{Adjoint system}%
\setcounter{equation}{0}\label{sec:adjoint}

In this section, we study the adjoint system associated with the underlying control problem which will be used to derive the duality relation. 

Fix $\g\in\Uad$ and let $(\theta,\u)$ be its state. For
$\pfrak\in\Vb^1(\O)$ and $\boldsymbol\phi\in\Wb$, we define
\begin{align}
	\mathcal A_{\u}(\pfrak,\boldsymbol\phi)
	&:=
	2\nu(\Db(\pfrak),\Db(\boldsymbol\phi))
	+\beta\int_\Gamma(\pfrak\cdot\tau)(\boldsymbol\phi\cdot\tau)\,dS
	 +b(\pfrak,\boldsymbol\phi,\Upsilon_\alpha(\u))
	-b(\boldsymbol\phi,\pfrak,\Upsilon_\alpha(\u))
	\nonumber\\
	&\quad+b(\pfrak,\u, \Upsilon_\alpha(\boldsymbol\phi))
	-b(\u,\pfrak,\Upsilon_\alpha(\boldsymbol\phi) ).
	\label{eq:adjoint-form}
\end{align}
We aim to establish that the pair of adjoint variables $(\varrho,\pfrak)$ satisfies
\begin{equation}\label{eq:adjoint-scalar}
	\left\{
	\begin{aligned}
		-\partial_t\varrho-\u\cdot\nabla\varrho
		&=\xi(\pfrak\cdot\e_2)
		&&\text{in }(0,T_\xi)\times\O,\\
		\varrho(T_\xi) =\Lambda^{-2}\theta(T_\xi), \qquad \pfrak(T_\xi)& =\boldsymbol{0},
		&&\text{in }\O,
	\end{aligned}
	\right.
\end{equation}
and the variational form 
\begin{equation}\label{weak:adjoint:p}
	\left\langle-\partial_t(\mathcal M_{\alpha\beta}\pfrak),
	\boldsymbol\phi\right\rangle_{\Wb'\times\Wb}
	+\mathcal A_{\u}(\pfrak,\boldsymbol\phi)
	=(\theta\nabla\varrho,\boldsymbol\phi)
	-\zeta(\curl\u,\curl\boldsymbol\phi),
\end{equation}
for every $\boldsymbol\phi\in\Wb$ and almost every $t\in(0,T_\xi)$. We do not
impose an additional strong tangential boundary condition at this level of
regularity.

\begin{remark}
	The adjoint velocity equation is understood in the variational sense. The conditions $\diver\pfrak=0$ in $\O$ and $\pfrak\cdot\n=0$ on $\Gamma$ are incorporated into the space $\Vb^1(\O)$. We do not prescribe an explicit tangential boundary condition for $\pfrak$. In fact, converting the variational adjoint into a strong system requires spatial integration by parts in the terms containing $\Upsilon_\alpha(\pfrak)$, which generates additional $\alpha$-dependent boundary traces. The vorticity term in the cost functional may generate a further tangential boundary contribution. Consequently, the corresponding strong boundary operator cannot be identified from the available regularity and the variational formulation alone.
\end{remark}

	\begin{lemma}\label{lem:adjoint}
	Assume $(\theta_0,\u_0)\in H^1(\O)\times\Vb^3(\O)$ and $\g\in\Uad$. Then
	\eqref{eq:adjoint-scalar}-\eqref{weak:adjoint:p} has a unique solution
	satisfying $(\varrho,\pfrak) \in L^\infty(0,T_\xi;H^1(\O))\times L^\infty(0,T_\xi;\Vb^1(\O))$ with $(\partial_t\varrho, \partial_t(\mathcal M_{\alpha\beta}\pfrak)) \in L^\infty(0,T_\xi;L^2(\O)) \times L^2(0,T_\xi;\Wb') $.
\end{lemma}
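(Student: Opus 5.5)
I will reverse time, $s=T_\xi-t$, so that \eqref{eq:adjoint-scalar}--\eqref{weak:adjoint:p} becomes a forward linear problem with initial data $\varrho(0)=\Lambda^{-2}\theta(T_\xi)$ and $\pfrak(0)=\boldsymbol 0$. The plan is to construct it by a Galerkin scheme: the scalar basis $\{\varphi_j\}$ for $\varrho$ and the velocity basis $\{\e_j\}\subset\wiWb\subset\Wb$ for $\pfrak$. In these bases $\mathcal M_{\alpha\beta}$ is the identity on the coefficients, so the finite-dimensional system is a linear ODE with coefficients in $L^\infty(0,T_\xi)$, because $\u\in L^\infty(0,T_\xi;H^3(\O))$ by Theorem~\ref{Wellposednes:state}. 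It is therefore solvable on all of $[0,T_\xi]$, and only uniform a priori bounds are needed.

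\textbf{Estimates.} Two facts about the data are used throughout. First, $\theta(T_\xi)\in H^1(\O)$, since $\theta_0\in H^1$ and Theorem~\ref{Wellposednes:state} applies in both cases. Second, $\Lambda^{-2}$ maps $L^2$ into $H^2$, so $\varrho(T_\xi)\in H^2\subset H^1$.

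For the scalar I test by $\varrho$ and $-\Delta\varrho$, or equivalently apply $\nabla$ and test by $\nabla\varrho$, exactly as in \eqref{theta:H1:estimate}. The transport terms cancel by $\diver\u=0$ and $\u\cdot\n=0$, which gives
\[
\tfrac{d}{ds}\|\varrho\|_{H^1}^2\le C(1+\|\u\|_{W^{1,\infty}})\|\varrho\|_{H^1}^2+\xi\|\pfrak\|_{H^1}^2 .
\]
For the velocity I test \eqref{weak:adjoint:p} with $\boldsymbol\phi=\pfrak$, which is legitimate at Galerkin level since $\pfrak_k\in\Wb$. The time term gives $\frac12\frac{d}{ds}\|\pfrak\|_{\Vb^1}^2$, and the symmetric part of $\mathcal A_\u$ gives dissipation. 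In the convective terms, $b(\pfrak,\pfrak,\Upsilon_\alpha(\u))-b(\pfrak,\pfrak,\Upsilon_\alpha(\u))=0$. The remaining term is
\[
b(\pfrak,\u,\Upsilon_\alpha(\pfrak))-b(\u,\pfrak,\Upsilon_\alpha(\pfrak))=(\curl\Upsilon_\alpha(\pfrak)\times\u,\pfrak)
\]
by \eqref{trilinear1}. It is bounded by $C\|\u\|_{H^3}\|\pfrak\|_{H^1}^2$ through $\eqref{curl:estimates}_3$ with the roles arranged appropriately. The right-hand side is bounded by $\|\theta\|_{L^4}\|\nabla\varrho\|_{L^2}\|\pfrak\|_{L^4}+\zeta\|\u\|_{H^1}\|\pfrak\|_{H^1}$, and $\|\theta\|_{L^4}\le C\|\theta\|_{H^1}$. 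Summing the two estimates and applying Gronwall gives the $L^\infty(H^1)\times L^\infty(\Vb^1)$ bounds, uniform in $k$. I will also use the form $(\theta\nabla\varrho,\boldsymbol\phi)=-(\varrho\,\diver(\theta\boldsymbol\phi))$ whenever less regularity of $\varrho$ is preferable.

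\textbf{Time derivatives and limit.} From the scalar equation, $\partial_t\varrho=-\u\cdot\nabla\varrho-\xi\pfrak\cdot\e_2\in L^\infty(L^2)$. For $\partial_t(\mathcal M_{\alpha\beta}\pfrak)$ I bound $|\mathcal A_\u(\pfrak,\boldsymbol\phi)|\le C\|\u\|_{H^3}\|\pfrak\|_{H^1}\|\boldsymbol\phi\|_{H^2}$ for $\boldsymbol\phi\in\Wb$. This is the reason the dual space is $\Wb'$: the term $b(\u,\pfrak,\Upsilon_\alpha(\boldsymbol\phi))$ requires $\boldsymbol\phi\in H^2$. Together with the right-hand side this yields $L^2(\Wb')$, in fact $L^\infty$. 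Passing to the limit is linear, using weak-$*$ convergence; the initial values are recovered through Aubin--Lions and continuity into $L^2$ and $\Wb'$.

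\textbf{Uniqueness and main obstacle.} Uniqueness follows from linearity: for zero data, repeat the energy estimate on the limit equation. The main obstacle is that the weak solution satisfies \eqref{weak:adjoint:p} only for $\boldsymbol\phi\in\Wb$, while $\pfrak$ lies only in $\Vb^1$, so testing with $\pfrak$ is not directly allowed. Its pairing $\langle\partial_t\mathcal M\pfrak,\pfrak\rangle$ is not defined a priori. I would first try a regularization: test with $(I+\epsilon\mathcal S)^{-1}\pfrak\in\Wb$, where $\mathcal S$ is the Stokes-type operator associated with $\Wb$, and obtain a Lions--Magenes-type identity $\frac{d}{dt}\|\pfrak\|_{\Vb^1}^2=2\langle\partial_t\mathcal M\pfrak,\pfrak\rangle$. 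This requires controlling the commutator of the convection term with the regularizer uniformly in $\epsilon$, which I expect to be the delicate step. If that fails, I would fall back on a duality argument: use the solvability of the forward linearized problem (Lemma~\ref{lem:linearized}, with general forcing) to show that a zero-data solution annihilates all test functions.
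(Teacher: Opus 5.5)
Your existence argument is the paper's argument. It uses time reversal, Galerkin in the bases of Subsection~\ref{Galerkin}, testing with $\hat\pfrak$ together with $\eqref{trilinear1}_2$ and $\eqref{curl:estimates}_3$, the $H^1$ transport estimate, Gronwall, and the $\Wb'$ bound on $\partial_t(\mathcal M_{\alpha\beta}\pfrak)$ forced by $b(\u,\pfrak,\Upsilon_\alpha(\boldsymbol\phi))$.

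\textbf{The gap is uniqueness.} Your opening claim, repeating the energy estimate for zero data, is not a proof. The Galerkin energy bound only controls Galerkin limits. For an arbitrary weak solution, the difference $\mathbf P\in L^\infty(0,T_\xi;\Vb^1(\O))$ is not an admissible test function in \eqref{eq:adjoint:P}, as you note yourself.

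Your preferred repair, testing with $\pfrak_\epsilon=(I+\epsilon\mathcal S)^{-1}\pfrak$, stops exactly at the decisive step.
\begin{itemize}
\item With $\mathcal S$ the operator of \eqref{eq:velocity-eigenbasis}, the time term is harmless.
\item However, $b(\pfrak,\u,\Upsilon_\alpha(\pfrak_\epsilon))-b(\u,\pfrak,\Upsilon_\alpha(\pfrak_\epsilon))$ contains $\Delta\pfrak_\epsilon$. This is not bounded in $L^2$ uniformly in $\epsilon$ unless $\pfrak\in H^2$, which is precisely what is unknown.
\item You can split off $(\curl\Upsilon_\alpha(\pfrak_\epsilon)\times\u,\pfrak_\epsilon)$, which $\eqref{curl:estimates}_3$ controls. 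What remains is $(\curl\Upsilon_\alpha(\pfrak_\epsilon)\times\u,\pfrak-\pfrak_\epsilon)$: a factor tending to zero in $\Vb^1$ with no rate, paired against a factor unbounded in $L^2$.
\end{itemize}
Closing this would need a Friedrichs/DiPerna--Lions type commutator lemma compatible with the Navier-slip boundary, which you do not have. As proposed, this route does not give uniqueness.

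\textbf{The argument that works is your fallback, and it is the paper's proof.}
\begin{itemize}
\item For arbitrary $f\in L^2(0,T_\xi;L^2(\O))$ and $F\in L^2(0,T_\xi;\Vb^0(\O))$, solve the forward problem \eqref{eq:linearized:uni}. It has homogeneous Navier-slip data, zero initial values, and \emph{both} a scalar source $f$ and a momentum source $F$. So Lemma~\ref{lem:linearized} must be redone with sources, not merely quoted.
\item The $\Vb^1$, $\mathbb P\Upsilon_\alpha$ and $L^2$ estimates of that lemma give $\mathbf Z_F\in L^\infty(0,T_\xi;\Wb)$ with $\partial_t\mathbf Z_F\in L^2(0,T_\xi;\Vb^1(\O))$. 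Hence $\mathbf Z_F$ is an admissible test function in \eqref{eq:adjoint:P}, and the time integration by parts is legitimate.
\item The endpoint terms vanish because $\mathbf S(T_\xi)=0$, $\mathbf P(T_\xi)=\boldsymbol 0$, $\Theta_f(0)=0$ and $\mathbf Z_F(0)=\boldsymbol 0$.
\item By \eqref{trilinear1}, the curl terms of the forward equation tested by $\mathbf P$ reproduce exactly the trilinear terms of $\mathcal A_\u(\mathbf P,\mathbf Z_F)$.
\item The couplings $\xi(\Theta_f,\mathbf P\cdot e_2)$ and $(\theta\nabla\mathbf S,\mathbf Z_F)$ cancel between the scalar and velocity identities.
\end{itemize}
This leaves $\int_0^{T_\xi}[(f,\mathbf S)+(F,\mathbf P)]\,dt=0$. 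Choosing $(f,F)=(\mathbf S,0)$ and then $(0,\mathbf P)$ gives $\mathbf S=0$ and $\mathbf P=\boldsymbol 0$. Make this the uniqueness proof rather than a contingency.
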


\begin{proof}
	Notice  that $(\pfrak,\varrho)$ is the solution of \eqref{eq:adjoint-scalar}-\eqref{weak:adjoint:p} if and only if $(\hat\pfrak(t),\hat\varrho(t))=(\pfrak(T_{\xi}-t),\varrho(T_\xi - t))$ is the solution of the following system with $\hat\u(t)= \u(T_{\xi}-t)$ and $\hat\theta (t)=\theta(T_\xi -t)$:
	\begin{equation}\label{eq:adjoint:hat}
		\left\{
		\begin{aligned}
		\partial_t\hat\varrho-\hat\u\cdot\nabla\hat\varrho
			& = \xi(\hat\pfrak\cdot e_2)
			&&\text{in }(0,T_\xi)\times\O,\\
			\hat\varrho(0) =\Lambda^{-2}\theta(T_\xi), \qquad \hat\pfrak(0)& =\boldsymbol{0},
			&&\text{in }\O,
		\end{aligned}
		\right.
	\end{equation}
	and the variational form 
	\begin{equation}\label{weak:adjoint:p:hat}
		\left\langle\partial_t(\mathcal M_{\alpha\beta}\hat\pfrak),
		\boldsymbol\phi\right\rangle_{\Wb'\times\Wb}
		+\mathcal A_{\hat\u}(\hat\pfrak,\boldsymbol\phi)
		=(\hat\theta\nabla\hat\varrho,\boldsymbol\phi)
		-\zeta(\curl\hat\u,\curl\boldsymbol\phi),
	\end{equation}
	for every $\boldsymbol\phi\in\Wb$ and almost every $t\in(0,T_\xi)$.
	
		Galerkin solutions are constructed with the bases introduced in
	Subsection~\ref{Galerkin}.  Testing \eqref{weak:adjoint:p:hat} by 	$\hat\pfrak$ at the Galerkin level and using $\eqref{trilinear1}_2$ and $\eqref{curl:estimates}_3$ gives
	 \begin{align}\label{eq:pfrak-H1-est}
		& \frac{1}{2} \frac{d}{dt} \|\hat\pfrak\|^2_{\Vb^1(\O)} + 2\nu\|\Db(\hat\pfrak)\|^2_{L^2(\O)} + \beta\|\hat\pfrak\cdot\tau\|^2_{L^2(\Gamma)} 
		\nonumber\\
		& = - (\curl (\Upsilon_\alpha(\hat\pfrak)) \times \hat\u, \hat\pfrak)+ (\hat\theta\nabla\hat\varrho,\hat\pfrak)
		-\zeta(\curl\hat\u,\curl\hat\pfrak)
		\nonumber\\
		& \leq C \|\hat\u\|_{H^3(\O)} \|\hat\pfrak\|_{H^1(\O)}^2  +  \|\hat \theta\|_{L^4(\O)}\|\nabla\hat\varrho\|_{L^2(\O)} \|\hat\pfrak\|_{L^4(\O)} + \zeta \|\curl\hat\u\|_{L^2(\O)}\|\curl\hat\pfrak\|_{L^2(\O)}
		\nonumber\\
		& \leq C[1+ \|\hat\u\|_{H^3(\O)}] \|\hat\pfrak\|_{\Vb^1(\O)}^2  + C \|\hat \theta\|_{H^1(\O)}^2\|\nabla\hat\varrho\|_{L^2(\O)}^2   + C \zeta^2 \|\curl\hat\u\|_{L^2(\O)}^2.
	\end{align}
	From $\eqref{eq:adjoint:hat}_1$, we get 
	\begin{align}\label{eq:eta-L2-est}
		\frac12\frac{d}{d t}\|\hat\varrho\|_{L^2(\O)}^2 &=\xi(\hat\pfrak\cdot e_2,\hat\varrho) \leq C\|\hat\varrho\|^2_{L^2(\O)} + C \xi \|\hat\pfrak\|^2_{L^2(\O)}. 
	\end{align} 
	Differentiating $\eqref{eq:adjoint:hat}_1$ and taking inner product with $\nabla\varrho$ gives 
	\begin{align} 
		\frac{d}{d t}\|\nabla\hat\varrho\|_{L^2(\O)}^2 \leq C(1+\|\nabla \hat\u\|_{L^\infty(\O)}) \|\nabla\hat\varrho\|_{L^2(\O)}^2 +C\xi\|\nabla \hat\pfrak\|_{L^2(\O)}^2. \label{eq:eta-gradient-est} 
	\end{align} 
	Combining \eqref{eq:eta-L2-est} and \eqref{eq:eta-gradient-est} gives
	\begin{align} 
		\frac{d}{d t}\|\hat\varrho\|_{H^1(\O)}^2 \leq C(1+\|\hat\u\|_{H^3(\O)})\|\hat\varrho\|_{H^1(\O)}^2 + C\xi\|\hat\pfrak\|_{\Vb^1(\O)}^2. \label{eq:eta-H1-est}
	\end{align}
	Adding \eqref{eq:pfrak-H1-est} and \eqref{eq:eta-H1-est} provides
	\begin{align}\label{eq:pfrak:varrho-H1-est}
		&  \frac{d}{dt} [\|\hat\pfrak\|^2_{\Vb^1(\O)} + \|\hat\varrho\|_{H^1(\O)}^2 ] 
		\leq C[1+ \|\hat\u\|_{H^3(\O)} + \|\hat \theta\|_{H^1(\O)}^2] [\|\hat\pfrak\|^2_{\Vb^1(\O)} + \|\hat\varrho\|_{H^1(\O)}^2 ]     + C \|\curl\hat\u\|_{L^2(\O)}^2.
	\end{align}
	Making use of Gronwall's inequality, we obtain $(\hat\varrho , \hat\pfrak)\in L^\infty(0,T_\xi;H^1(\O)) \times L^\infty(0,T_\xi;\Vb^1(\O))$.

	It remains to estimate the weak time derivatives.  From the definition of
	$\mathcal A_{\hat\u}$, the two-dimensional Sobolev embeddings, the trace
	theorem, and the trilinear estimates, for every $\boldsymbol\phi\in\Wb$ we
	have
	\begin{align*}
		|\mathcal A_{\hat\u}(\hat\pfrak,\boldsymbol\phi)|
		&\leq C\bigl(1+\|\hat\u\|_{H^3(\O)}\bigr)
		\|\hat\pfrak\|_{\Vb^1(\O)}\|\boldsymbol\phi\|_{\Wb},\\
		|(\hat\theta\nabla\hat\varrho,\boldsymbol\phi)|
		&\leq C\|\hat\theta\|_{H^1(\O)}
		\|\hat\varrho\|_{H^1(\O)}\|\boldsymbol\phi\|_{\Wb},\\
		|(\curl\hat\u,\curl\boldsymbol\phi)|
		&\leq C\|\hat\u\|_{H^1(\O)}\|\boldsymbol\phi\|_{\Wb}.
	\end{align*}
	Consequently,
	\begin{align*}
		\|\partial_t(\mathcal M_{\alpha\beta}\hat\pfrak)\|_{\Wb'}
		\leq C\Bigl[&
		\bigl(1+\|\hat\u\|_{H^3}\bigr)\|\hat\pfrak\|_{\Vb^1}
		+\|\hat\theta\|_{H^1}\|\hat\varrho\|_{H^1}
		+\zeta\|\hat\u\|_{H^1}\Bigr].
	\end{align*}
	The scalar equation also gives
	\[
	\|\partial_t\hat\varrho\|_{L^2(\O)}
	\leq C\|\hat\u\|_{H^3(\O)}\|\hat\varrho\|_{H^1(\O)}
	+\xi\|\hat\pfrak\|_{L^2(\O)}.
	\]
	The preceding uniform Galerkin bounds therefore yield
	\[
	\partial_t(\mathcal M_{\alpha\beta}\hat\pfrak)
	\in L^2(0,T_\xi;\Wb'),
	\qquad
	\partial_t\hat\varrho\in L^\infty(0,T_\xi;L^2(\O)).
	\]
	Weak and weak-star compactness now permit passage to the limit in the
	Galerkin equations.  The initial conditions of the time-reversed problem are
	preserved by the corresponding weak continuity, and reversing time gives a
	solution of \eqref{eq:adjoint-scalar}-\eqref{weak:adjoint:p}.


  We next prove uniqueness.  Let $(\varrho_i,\pfrak_i)$, $i=1,2$, be two weak
	solutions. Then, the pair $(\mathbf{P}:=\pfrak_1-\pfrak_2, \mathbf{S}:=\varrho_1-\varrho_2)$ satisfies:
\begin{equation}\label{weak:adjoint:S}
	\left\{\begin{aligned}
		-\partial_t\mathbf{S} - \u\cdot\nabla\mathbf{S}
		&= \xi(\mathbf{P} \cdot e_2)  &&\text{in }(0,T_{\xi})\times\O,\\
			\mathbf{S}(T_\xi) =0, \qquad 	\mathbf{P}(T_\xi)& =\boldsymbol{0} &&\text{in }\O,
	\end{aligned}
	\right.
\end{equation}
and weak formulation
	\begin{equation}\label{eq:adjoint:P}
	\left\langle-\partial_t(\mathcal M_{\alpha\beta}\mathbf{P}),
	\boldsymbol\phi\right\rangle_{\Wb'\times\Wb}
	+\mathcal A_{\u}(\mathbf{P},\boldsymbol\phi)
	=(\theta\nabla\mathbf{S},\boldsymbol\phi),
\end{equation}
for every $\boldsymbol\phi\in\Wb$ and almost every $t\in(0,T_\xi)$.

A direct energy argument cannot be applied to
\eqref{eq:adjoint:P}. Indeed, at the natural level of regularity we only have
\[
\mathbf P\in L^\infty(0,T_\xi;\Vb^1(\Omega)),
\qquad
\partial_t(\mathcal M_{\alpha\beta}\mathbf P)
\in L^2(0,T_\xi;\Wb'),
\]
whereas the variational identity \eqref{eq:adjoint:P} is defined for test
functions belonging to \(\Wb\). Since it is not known that
\(\mathbf P\in\Wb\), the choice
\(\boldsymbol\phi=\mathbf P\) is not admissible. Notice that testing by the
adjoint Galerkin approximation is legitimate for deriving the existence
estimates, but it does not by itself establish uniqueness among all weak
solutions. Uniqueness is therefore proven via a dual problem, using arguments similar to those in \cite{LionsMasmoudi2001}. For
arbitrary distributed sources \(f\) and \(F\), we introduce an auxiliary
forward linear system whose velocity component belongs to \(\Wb\). This
velocity can be used as a test function in \eqref{eq:adjoint:P}, and the
resulting duality identity will imply that both \(\mathbf S\) and
\(\mathbf P\) vanish.

Let us choose $f\in L^2(0,T_\xi;L^2(\O))$ and $F\in L^2(0,T_\xi;\Vb^0(\O))$ be arbitrary and fix it. Consider the pair $(\Theta_{f},\mathbf{Z}_{F})$ as the unique  solution of the following system:
\begin{equation}\label{eq:linearized:uni}
	\left\{\begin{aligned}
		\partial_t\Theta_{f}+\u\cdot\nabla\Theta_{f}
		+\mathbf{Z}_{F}\cdot\nabla\theta&= f &&\text{in }(0,T_{\xi})\times\O,\\
		\partial_t\Upsilon_\alpha(\mathbf{Z}_{F})-\nu\Delta\mathbf{Z}_{F}
		+ \curl\Upsilon_\alpha(\mathbf{Z}_{F})\times\u
		+\curl\Upsilon_\alpha(\u)\times\mathbf{Z}_{F}+\nabla \hat q
		& = \xi\Theta_{f} e_2 + F &&\text{in }(0,T_{\xi})\times\O,\\
		\diver\mathbf{Z}_{F}&=0, &&\text{in }(0,T_{\xi})\times\O,\\
	\Theta_{f}(0) =0,\qquad	\mathbf{Z}_{F}(0)& =\boldsymbol{0}, &&\text{in }\O,\\
		\mathbf{Z}_{F}\cdot\n  =0 \;\;\; \text{ and } \;\;\; [2\nu \n \cdot \Db(\mathbf{Z}_{F})  + \beta \mathbf{Z}_{F}]\cdot\tau &  = 0, &&\text{in }(0,T_{\xi})\times\Gamma.
	\end{aligned}
	\right.
\end{equation}
\noindent
The well-posedness of \eqref{eq:linearized:uni} follows by repeating the
Galerkin argument used in the proof of Lemma~\ref{lem:linearized}, with
homogeneous boundary data and with the additional distributed sources \(f\)
and \(F\). More precisely, testing the scalar equation by \(\Theta_f\), the
velocity equation by \(\mathbf Z_F\), and its projected generalized-vorticity
form by $\mathbf Y_F:=\mathbb P\Upsilon_\alpha(\mathbf Z_F),$ produces the same estimates as in the proof of Lemma~\ref{lem:linearized}. The additional terms are controlled by
\[
|(f,\Theta_f)|
\leq \frac12\|f\|_{L^2(\Omega)}^2
+\frac12\|\Theta_f\|_{L^2(\Omega)}^2 \;\; \text{ and } \;\;
|(F,\mathbf Y_F)|
\leq \frac12\|F\|_{L^2(\Omega)}^2
+\frac12\|\mathbf Y_F\|_{L^2(\Omega)}^2.
\]
Consequently, Gronwall's inequality gives
\[
\begin{aligned}
	&\|\Theta_f\|_{L^\infty(0,T_\xi;L^2(\Omega))}^2
	+\|\mathbf Z_F\|_{L^\infty(0,T_\xi;\Wb)}^2
	+\|\partial_t\Theta_f\|_{L^2(0,T_\xi;(H^1(\Omega))')}^2
	+\|\partial_t\mathbf Z_F\|_{L^2(0,T_\xi;\Vb^1(\Omega))}^2
	\\
	& \leq
	C\left(
	\|f\|_{L^2(0,T_\xi;L^2(\Omega))}^2
	+\|F\|_{L^2(0,T_\xi;\Vb^0(\Omega))}^2
	\right),
\end{aligned}
\]
where \(C>0\) depends only on the fixed state \((\theta,\u)\), the
coefficients of the system, \(\Omega\), and \(T_\xi\), and is independent of
\(f\) and \(F\). These uniform estimates permit passage to the limit in the
Galerkin equations and yield a solution of \eqref{eq:linearized:uni}.
Applying the same estimate to the difference of two solutions proves
uniqueness. In particular, \(\mathbf Z_F(t)\in\Wb\) for almost every
\(t\in(0,T_\xi)\), so that \(\mathbf Z_F\) is an admissible test function in
the weak adjoint equation \eqref{eq:adjoint:P}.

The following identities are first verified for the common Galerkin
approximations and then passed to the limit.  Equivalently, the time
integration by parts follows from the Lions--Magenes lemma for the relevant
Gelfand triples.  The initial and terminal data eliminate all endpoint terms.
Using $\diver\u=\diver\mathbf Z_F=0$ and the zero normal traces, the scalar
equations give
\begin{align}
	\int_0^{T_\xi}(f,\mathbf S)\,dt
	&=\xi\int_0^{T_\xi}(\Theta_f,\mathbf P\cdot e_2)\,dt
	-\int_0^{T_\xi}(\theta\nabla\mathbf S,\mathbf Z_F)\,dt.
	\label{eq:transpose-scalar}
\end{align}
The forward velocity equation and the weak adjoint equation, tested through
the Galerkin limit, give
\begin{align}
	\int_0^{T_\xi}(F,\mathbf P)\,dt
	&=-\xi\int_0^{T_\xi}(\Theta_f,\mathbf P\cdot e_2)\,dt
	+\int_0^{T_\xi}(\theta\nabla\mathbf S,\mathbf Z_F)\,dt.
	\label{eq:transpose-velocity}
\end{align}
Adding \eqref{eq:transpose-scalar} and
\eqref{eq:transpose-velocity} yields
\[
\int_0^{T_\xi}(f,\mathbf S)\,dt
+\int_0^{T_\xi}(F,\mathbf P)\,dt=0.
\]
Since $f$ and $F$ are arbitrary, first take $(f,F)=(\mathbf S,0)$ and then
$(f,F)=(0,\mathbf P)$.  These choices are admissible because
$\mathbf S\in L^2(0,T_\xi;L^2)$ and
$\mathbf P\in L^2(0,T_\xi;\Vb^0)$.  Hence
$\mathbf S=0$ and $\mathbf P=\boldsymbol0$, which proves uniqueness.
\end{proof}

\subsection{Duality relation}\label{subsec:duality}
Let $(\rho,\z)$ solve \eqref{eq:linearized} in a direction
$\h\in\Ucal_0$, set $\Hbf=\mathcal R\h$ and $\w=\z-\Hbf$, and let
$(\varrho,\pfrak)$ solve the adjoint system.  Let us define 
\begin{align}\label{eq:lift-forcing}
	\mathcal F_{\u}(\Kbf)
	&:={\partial_t\Upsilon_\alpha(\Kbf)}-\nu\Delta\Kbf
	+\curl\Upsilon_\alpha(\Kbf)\times\u
	+\curl\Upsilon_\alpha(\u)\times\Kbf.
\end{align}

\begin{lemma}[Duality]\label{lem:duality}
	Let $(\rho,\z)$ solve the linearized system in the direction
	$\h\in\Ucal_0$, set $\Hbf=\mathcal R\h$ and $\w=\z-\Hbf$, and let
	$(\varrho,\pfrak)$ solve the adjoint system. Then
	\begin{align}
		(\Lambda^{-2}\theta(T_\xi),\rho(T_\xi))
		&=-\int_0^{T_\xi}(\Hbf\cdot\nabla\theta,\varrho)\,dt
		+\zeta\int_0^{T_\xi}(\curl\u,\curl\w)\,dt
		-\int_0^{T_\xi}
		(\mathcal F_{\u}(\Hbf),\pfrak)\,dt.
		\label{Duality:relation}
	\end{align}
\end{lemma}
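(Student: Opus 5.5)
The plan is to pair the scalar equations and the velocity equations separately, integrate in time, and use the initial data of the linearized system together with the terminal data of the adjoint system to remove all but one endpoint term. All computations are first carried out at the Galerkin level, or equivalently through the Lions--Magenes lemma for the Gelfand triples $H^1\subset L^2\subset (H^1)'$ and $\Wb\subset\Vb^1\subset\Wb'$, exactly as in the uniqueness part of Lemma~\ref{lem:adjoint}.

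\textbf{Scalar pairing.} I test the linearized scalar equation $\partial_t\rho+\u\cdot\nabla\rho+\z\cdot\nabla\theta=0$ with $\varrho$, and the adjoint scalar equation \eqref{eq:adjoint-scalar} with $\rho$. Integrating $\frac{d}{dt}(\rho,\varrho)$ over $(0,T_\xi)$ with $\rho(0)=0$ and $\varrho(T_\xi)=\Lambda^{-2}\theta(T_\xi)$ gives
\[
(\Lambda^{-2}\theta(T_\xi),\rho(T_\xi))
=-\int_0^{T_\xi}(\z\cdot\nabla\theta,\varrho)\,dt
-\xi\int_0^{T_\xi}(\rho,\pfrak\cdot e_2)\,dt .
\]
The transport terms cancel because $(\u\cdot\nabla\rho,\varrho)=-(\rho,\u\cdot\nabla\varrho)$ when $\diver\u=0$ and $\u\cdot\n=0$. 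The pairing $\langle\partial_t\rho,\varrho\rangle$ is justified since $\rho\in C([0,T_\xi];(H^1)')$ and $\varrho\in W^{1,\infty}(0,T_\xi;L^2)\cap L^\infty(0,T_\xi;H^1)$. Next I split $\z=\w+\Hbf$ and integrate by parts:
\[
(\w\cdot\nabla\theta,\varrho)=-(\theta\nabla\varrho,\w).
\]

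\textbf{Velocity pairing.} The field $\w$ satisfies \eqref{eq:linearized:lift}, that is $\mathcal F_{\u}(\w)+\nabla q=\xi\rho e_2-\mathcal F_{\u}(\Hbf)$, and $\w(t)\in\Wb$. So $\w$ is an admissible test function in \eqref{weak:adjoint:p}. I test that equation with $\w$ and integrate in time, moving the time derivative onto $\w$; the endpoint terms vanish because $\w(0)=0$ and $\pfrak(T_\xi)=0$. This yields
\[
\int_0^{T_\xi}\Bigl[(\partial_t\w,\pfrak)_{\Vb^1}+\mathcal A_{\u}(\pfrak,\w)\Bigr]dt
=\int_0^{T_\xi}\Bigl[(\theta\nabla\varrho,\w)-\zeta(\curl\u,\curl\w)\Bigr]dt .
\]
The key algebraic step is to check that the left-hand side equals the weak form of $\int_0^{T_\xi}(\mathcal F_{\u}(\w),\pfrak)\,dt$, which by the $\w$-equation equals
\[
\int_0^{T_\xi}\bigl(\xi\rho e_2-\mathcal F_{\u}(\Hbf),\pfrak\bigr)\,dt .
\]
The pressure drops out since $\pfrak\in\Vb^1$. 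Concretely, I match $\mathcal A_{\u}$ term by term against \eqref{Weak:state:v}, with the roles of $\u$ and $\w$ as in \eqref{trilinear1}:
\begin{itemize}
\item The viscous and friction terms come from integrating $-\nu\Delta\w$ by parts, using the homogeneous Navier condition on $\w$.
\item The term $b(\pfrak,\u,\Upsilon_\alpha(\w))-b(\u,\pfrak,\Upsilon_\alpha(\w))$ equals $(\curl\Upsilon_\alpha(\w)\times\u,\pfrak)$ by $\eqref{trilinear1}_2$.
\item The term $b(\pfrak,\w,\Upsilon_\alpha(\u))-b(\w,\pfrak,\Upsilon_\alpha(\u))$ is matched with $(\curl\Upsilon_\alpha(\u)\times\w,\pfrak)$ up to sign. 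I must verify this sign carefully: $\eqref{trilinear1}_2$ gives $b(\pfrak,\w,\cdot)-b(\w,\pfrak,\cdot)$, and the adjoint form was defined precisely to make this hold.
\item The time term uses \eqref{eqn-IbP}: $(\Upsilon_\alpha(\partial_t\w),\pfrak)=(\partial_t\w,\pfrak)_{\Vb^1}$ for $\partial_t\w$ in $\Wb$ at the Galerkin level.
\end{itemize}

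\textbf{Combining.} Substituting the velocity identity into the scalar identity, the terms $\pm(\theta\nabla\varrho,\w)$ cancel, and the buoyancy terms $\pm\xi(\rho,\pfrak\cdot e_2)$ cancel. What remains is exactly \eqref{Duality:relation}.

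\textbf{Main obstacle.} The difficulty is rigor rather than algebra: $\pfrak$ is only in $\Vb^1$ with $\partial_t\mathcal M_{\alpha\beta}\pfrak\in L^2(\Wb')$, and its tangential boundary condition is unknown. The integration by parts must therefore go through the variational form, never through a strong adjoint PDE. I would justify the time integration via the Lions--Magenes product rule for $\langle\partial_t\mathcal M_{\alpha\beta}\pfrak,\w\rangle+\langle\partial_t\mathcal M_{\alpha\beta}\w,\pfrak\rangle$, using $\w\in L^\infty(\Wb)$ and $\partial_t\w\in L^2(\Vb^1)$. As a fallback, I would pass to the limit in Galerkin approximations of $\pfrak$ built from the basis $\{\e_j\}\subset\wiWb$. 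Here the bound $\mathcal F_{\u}(\Hbf)\in L^2(L^2)$ from Lemma~\ref{Lifting:estimates} is needed so that the last term is well defined.
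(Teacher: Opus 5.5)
Your proposal is correct and follows the paper's argument. As in the paper, you pair the linearized scalar equation with $\varrho$, the adjoint scalar equation with $\rho$, the lifted momentum equation with $\pfrak$, and the variational adjoint equation with $\w\in\Wb$; the data $\rho(0)=0$, $\w(0)=\boldsymbol{0}$, $\pfrak(T_\xi)=\boldsymbol{0}$, $\varrho(T_\xi)=\Lambda^{-2}\theta(T_\xi)$ then leave only $(\Lambda^{-2}\theta(T_\xi),\rho(T_\xi))$. The sign you flagged checks out: by $\eqref{trilinear1}_2$, the two cross terms of $\mathcal A_{\u}(\pfrak,\w)$ equal $(\curl\Upsilon_\alpha(\u)\times\w,\pfrak)+(\curl\Upsilon_\alpha(\w)\times\u,\pfrak)$ with a plus sign, so the difference of the two velocity identities is exactly $\frac{d}{dt}(\w,\pfrak)_{\Vb^1(\O)}$, and the $\xi$-terms and $(\theta\nabla\varrho,\w)$-terms cancel as you describe.
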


\begin{proof}
	At the Galerkin level, test the scalar linearized equation by $\varrho$,
	the scalar adjoint equation by $\rho$, the lifted linearized momentum
	equation by $\pfrak$, and the variational adjoint equation by $\w$. Add the
	four identities and integrate in time. The scalar transport terms cancel by
	incompressibility and zero normal trace, while the velocity terms cancel by
	\eqref{trilinear1}. Since
	\[
	\rho(0)=0,\quad \w(0)=0,\quad \pfrak(T_\xi)=0,
	\quad
	\varrho(T_\xi)=\Lambda^{-2}\theta(T_\xi),
	\]
	the only scalar endpoint term is
	$(\Lambda^{-2}\theta(T_\xi),\rho(T_\xi))$. Finally,
	$\Hbf(0)=\mathcal R\h(0)=0$. Passing to the Galerkin limit yields
	\eqref{Duality:relation}. This completes the proof.
\end{proof}

\section{First-order necessary optimality conditions}%
\setcounter{equation}{0}\label{sec:first-order}

In this section, we derive the first-order necessary optimality conditions.  For a lifting $\Hbf=\mathcal R\h$, define
\begin{align}
	\mathcal D(\Hbf,\theta,\u,\varrho,\pfrak)
	&:=
	\int_0^{T_\xi}(\Hbf\cdot\nabla\theta,\varrho)\,dt
	+\zeta\int_0^{T_\xi}(\curl\u,\curl\Hbf)\,dt
 	+\int_0^{T_\xi}(\mathcal F_{\u}(\Hbf),\pfrak)\,dt,
	\label{eq:boundary-functional}
\end{align}
where $\mathcal F_{\u}$ is given by \eqref{eq:lift-forcing}.

	\begin{theorem}[First-order necessary condition]
	\label{thm:first-order}
	Let $\g^*\in\Uad$ be an optimal control of control problem \eqref{eqn:control:problem}, let
	$(\theta^*,\u^*)=(\theta(\g^*),\u(\g^*))$, and let
	$(\varrho^*,\pfrak^*)$ be the corresponding adjoint solution. Then,
	\begin{equation}\label{eq:optimality-inequality}
		\gamma(\g^*,\g-\g^*)_{\Ucal}
		-\mathcal D\bigl(\mathcal R(\g-\g^*),
		\theta^*,\u^*,\varrho^*,\pfrak^*\bigr)
		\geq0,
	\end{equation}
for every $\g\in\Uad.$
\end{theorem}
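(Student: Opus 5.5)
The plan is the standard convexity argument combined with the directional derivative from Theorem~\ref{thm:Gateaux} and the duality identity of Lemma~\ref{lem:duality}. Fix $\g\in\Uad$ and set $\h:=\g-\g^*$. Since both controls have initial value $\g_{\rm in}$, we have $\h\in\Ucal_0$. By convexity of $\Uad$, $\g^*+r\h\in\Uad$ for all $r\in[0,1]$, so $\h\in\mathcal T_{\Uad}(\g^*)$. Optimality gives $\J(\g^*+r\h)-\J(\g^*)\geq0$ for $r\in(0,1]$. We divide by $r$, let $r\downarrow0$, and compute the one-sided derivative term by term.

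\textbf{Step 1 (control cost).} We have $\frac{\gamma}{2r}\bigl(\|\g^*+r\h\|_{\Ucal}^2-\|\g^*\|_{\Ucal}^2\bigr)=\gamma(\g^*,\h)_{\Ucal}+\frac{\gamma r}{2}\|\h\|_{\Ucal}^2\to\gamma(\g^*,\h)_{\Ucal}$.

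\textbf{Step 2 (mix-norm).} Write the mix-norm difference as
\[
\tfrac12\bigl(\Lambda^{-2}(\theta_r+\theta^*)(T_\xi),\theta_r(T_\xi)-\theta^*(T_\xi)\bigr).
\]
The map $\Lambda^{-2}$ is bounded from $(H^1)'$ to $H^1$, and by Lemma~\ref{thm:stability}, $\theta_r(T_\xi)\to\theta^*(T_\xi)$ in $L^2$. By Theorem~\ref{thm:Gateaux}, the quotient converges in $(H^1)'$ to $\rho(T_\xi)$. The derivative is therefore $(\Lambda^{-2}\theta^*(T_\xi),\rho(T_\xi))$.

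\textbf{Step 3 (enstrophy).} In the same way,
\[
\frac{1}{r}\int_0^{T_\xi}\bigl(\|\curl\u_r\|^2-\|\curl\u^*\|^2\bigr)dt=\int_0^{T_\xi}\bigl(\curl(\u_r+\u^*),\curl\z_r\bigr)dt.
\]
By \eqref{eq:directional-u} and \eqref{stability:final}, this tends to $2\int_0^{T_\xi}(\curl\u^*,\curl\z)\,dt$. The enstrophy contribution is thus $-\zeta\int_0^{T_\xi}(\curl\u^*,\curl\z)\,dt$.

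\textbf{Step 4 (identification with $\mathcal D$).} This is the delicate bookkeeping step. Set $\Hbf=\mathcal R\h$ and $\z=\w+\Hbf$, and substitute \eqref{Duality:relation} for $(\Lambda^{-2}\theta^*(T_\xi),\rho(T_\xi))$. The derivative of $\J$ then equals
\[
\gamma(\g^*,\h)_{\Ucal}-\int_0^{T_\xi}(\Hbf\cdot\nabla\theta^*,\varrho^*)\,dt+\zeta\int_0^{T_\xi}(\curl\u^*,\curl\w)\,dt-\int_0^{T_\xi}(\mathcal F_{\u^*}(\Hbf),\pfrak^*)\,dt-\zeta\int_0^{T_\xi}(\curl\u^*,\curl(\w+\Hbf))\,dt.
\]
The $\w$-enstrophy terms cancel, leaving $-\zeta\int_0^{T_\xi}(\curl\u^*,\curl\Hbf)\,dt$. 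What remains is exactly $\gamma(\g^*,\h)_{\Ucal}-\mathcal D(\mathcal R\h,\theta^*,\u^*,\varrho^*,\pfrak^*)$ by \eqref{eq:boundary-functional}.

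I expect the main obstacle to be a sign check. The adjoint right-hand side carries $-\zeta(\curl\u,\curl\boldsymbol\phi)$, and the duality lemma records this as $+\zeta\int(\curl\u,\curl\w)$. I would re-derive this sign from the Galerkin pairing of \eqref{weak:adjoint:p} with $\w$, and confirm that $\Hbf(0)=0$ kills the endpoint term from $\partial_t\Upsilon_\alpha(\Hbf)$.

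Finally, the limit of the nonnegative quotients is nonnegative. Since $\mathcal R$ and $\mathcal D$ are linear in $\Hbf$, this gives \eqref{eq:optimality-inequality}.
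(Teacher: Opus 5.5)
Your proposal is correct and follows essentially the same route as the paper. In both, convexity makes $\g^*+r\h$ feasible, the Lipschitz stability lemma and Theorem~\ref{thm:Gateaux} give the formula \eqref{eqn:93} for $\J'(\g^*)[\h]$, and substituting \eqref{Duality:relation} with $\z=\w+\Hbf$ cancels the $\w$-enstrophy terms, leaving $\gamma(\g^*,\h)_{\Ucal}-\mathcal D(\mathcal R\h,\theta^*,\u^*,\varrho^*,\pfrak^*)$.

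The sign you flagged does check out. Test the lifted forward momentum by $\pfrak$ and \eqref{weak:adjoint:p} by $\w$: the $\mathcal A_{\u}$ terms cancel exactly, and $-\zeta(\curl\u,\curl\w)$ becomes $+\zeta\int_0^{T_\xi}(\curl\u,\curl\w)\,dt$. The only velocity endpoint term is $(\w,\pfrak)_{\Vb^1(\O)}$, which vanishes because $\w(0)=0$ (this is where $\Hbf(0)=0$ enters) and $\pfrak(T_\xi)=0$. No time integration by parts of $\partial_t\Upsilon_\alpha(\Hbf)$ is needed, since that term stays inside $\mathcal F_{\u}(\Hbf)$.
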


\begin{proof}
	Fix $\g\in\Uad$ and put $\h=\g-\g^*$. Convexity of $\Uad$ implies
	$\g^*+r\h\in\Uad$ for $0\leq r\leq1$. Let $(\rho,\z)$ be the solution of
	the linearized system at $\g^*$ in the direction $\h$.  Consider 
	\begin{align*}
		\frac{\J(\g^{\ast}+r\h) - \J(\g^{\ast})}{r}
		& = \frac{r}{2} \left\| \frac{\theta(T_{\xi};\g^{\ast}+r\h) - \theta(T_{\xi};\g^{\ast})}{r} \right\|_{(H^1(\O))'}^2 + \left( \Lambda^{-2}\theta(T_{\xi};\g^{\ast}) ,  \rho(T_{\xi};\h) \right)
		\nonumber\\
		& \quad + \left( \Lambda^{-1}\theta(T_{\xi};\g^{\ast}) , \frac{\Lambda^{-1}[\theta(T_{\xi};\g^{\ast}+r\h)-\theta(T_{\xi};\g^{\ast})]}{r} - \Lambda^{-1}\rho(T_{\xi};\h) \right)
		\nonumber\\
		& \quad + \gamma (\g^{\ast},\h)_{\Ucal} + \frac{r\gamma}{2}\|\h\|^2_{\Ucal} - \frac{\zeta r}{2} \int_0^{T_{\xi}}\left\|\frac{\curl\u(t;\g^{\ast}+r\h) - \curl\u(t;\g^{\ast})}{r}\right\|_{L^2(\O)}^2 d t 
		\nonumber\\ 
		& \quad - \zeta  \int_0^{T_{\xi}}\left(\curl\u(t;\g^{\ast}) , \frac{\curl\u(t;\g^{\ast}+r\h) - \curl\u(t;\g^{\ast})}{r} - \curl\z(t,\h)\right) d t 
		\nonumber\\ 
		& \quad - \zeta  \int_0^{T_{\xi}}\left(\curl\u(t;\g^{\ast}) , \curl\z(t;\h)\right) d t,
	\end{align*}
which follows from the Lipschitz-stability lemma (Lemma \ref{thm:stability}) and the directional differentiability theorem (Theorem \ref{thm:Gateaux}) that
	\begin{align}\label{eqn:93}
		\J^\prime(\g^{\ast})[\h] & =   \left( \Lambda^{-2}\theta(T_{\xi};\g^{\ast}) ,  \rho(T_{\xi};\h) \right)
		+ \gamma (\g^{\ast},\h)_{\Ucal}
		- \zeta  \int_0^{T_{\xi}}\left(\curl\u(t;\g^{\ast}) , \curl\z(t;\h)\right) d t.
	\end{align}
Set $\Hbf=\mathcal R\h$ and $\w=\z-\Hbf$. Substituting the duality identity
\eqref{Duality:relation} into \eqref{eqn:93} and using
$\z=\w+\Hbf$ gives
\begin{equation}\label{eq:reduced-gradient}
	\J'(\g^*)[\h]
	=\gamma(\g^*,\h)_{\Ucal}
	-\mathcal D(\Hbf,\theta^*,\u^*,\varrho^*,\pfrak^*).
\end{equation}
Optimality of $\g^*$ implies
$\J'(\g^*)[\g-\g^*]\geq0$, and \eqref{eq:optimality-inequality} follows. This completes the proof.
\end{proof}

\section{Uniqueness of optimal solution of \eqref{eqn:control:problem}}%
\setcounter{equation}{0}\label{sec:uniqueness}

In this section, we establish that, for sufficiently large $\gamma>0$, the optimal control problem admits a unique solution. To prove the uniqueness of the optimal solution, we require the adjoint variable $\pfrak$ to possess $H^2$-regularity. Therefore, throughout this section, we assume that $\beta=\zeta=0$.

Throughout this section, $C_M$ denotes a constant depending only on $\O, T_\xi, \alpha, \nu, \xi, M, \theta_0, \u_0,$ and on the fixed compatibility datum $\g_{\rm in}$. It is independent of
$\gamma$ and of the particular controls in $\Uad$. This uniformity follows
from \eqref{eq:uniform-lifting} and from the fact that the state and adjoint
equations do not contain $\gamma$.

The following lemma is useful to write the strong form of the adjoint system:
\begin{lemma}[{\cite[Lemma 5.1]{AradaCipriano2015}}]\label{lem:for:adjoint}
	Let $\u,\v\in \wiWb$, $\w\in\Wb$ and $\beta=0$. Then
	\begin{align*}
		(\curl \Upsilon_\alpha(\u\times\v ), \w ) = b(\v,\u,\Upsilon_\alpha(\w)) - b(\u,\v,\Upsilon_\alpha(\w) ).
	\end{align*}
\end{lemma}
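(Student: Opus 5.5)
The plan is to reduce the identity to an integration by parts in which all boundary terms vanish. Write $\phi:=\u\times\v=u_1v_2-u_2v_1$, a scalar, so that $\curl\phi=(\partial_2\phi,-\partial_1\phi)$. Since $\diver\u=\diver\v=0$, the vector identity $\nabla\times(\u\times\v)=\u\,\diver\v-\v\,\diver\u+(\v\cdot\nabla)\u-(\u\cdot\nabla)\v$ gives
\[
\boldsymbol f:=\curl\phi=(\v\cdot\nabla)\u-(\u\cdot\nabla)\v .
\]
Because $\curl$ commutes with $\Delta$, we have $\curl\Upsilon_\alpha(\phi)=\Upsilon_\alpha(\boldsymbol f)=\boldsymbol f-\alpha\Delta\boldsymbol f$. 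The target identity is then exactly
\[
(\boldsymbol f,\w)-\alpha(\Delta\boldsymbol f,\w)=(\boldsymbol f,\Upsilon_\alpha(\w)),
\]
since $(\boldsymbol f,\Upsilon_\alpha(\w))=b(\v,\u,\Upsilon_\alpha(\w))-b(\u,\v,\Upsilon_\alpha(\w))$ by definition of $b$. So everything reduces to proving $(\Delta\boldsymbol f,\w)=(\boldsymbol f,\Delta\w)$. For $\u,\v\in\wiWb\subset H^3$ we have $\boldsymbol f\in H^2$, and $\w\in H^2$, so all traces below make sense; I would first argue for smooth fields and conclude by density.

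For the symmetric Green formula, both fields are divergence free: $\diver\w=0$, and $\diver\boldsymbol f=\diver\curl\phi=0$. Hence $\Delta=2\diver\Db$ on each, and
\[
(\Delta\boldsymbol f,\w)-(\boldsymbol f,\Delta\w)=\int_\Gamma 2(\Db(\boldsymbol f)\n)\cdot\w\,dS-\int_\Gamma 2(\Db(\w)\n)\cdot\boldsymbol f\,dS .
\]
Next I show the normal parts drop out. We have $\w\cdot\n=0$. Also $\boldsymbol f\cdot\n=\pm\partial_\tau\phi$, and on $\Gamma$ we get $\phi=(\u\cdot\n)(\v\cdot\tau)-(\u\cdot\tau)(\v\cdot\n)=0$, so $\boldsymbol f\cdot\n=0$. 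Thus only tangential components survive. Since $\beta=0$ and $\w\in\Wb$, we have $(\Db(\w)\n)\cdot\tau=0$, which kills the second integral.

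The main obstacle is the first integral: I must show $(\Db(\boldsymbol f)\n)\cdot\tau=0$ on $\Gamma$, i.e.\ that $\boldsymbol f$ itself satisfies the homogeneous Navier condition with $\beta=0$. By Lemma~\ref{vorticit-boundary} with $\g=0$ and $\beta=0$, applied first to $\u$ and $\v$, we know $\curl\u=2\kappa(\u\cdot\tau)$ and $\curl\v=2\kappa(\v\cdot\tau)$ on $\Gamma$. The converse direction of the same computation shows that, for a tangential divergence-free field, the condition $(\Db(\boldsymbol f)\n)\cdot\tau=0$ is equivalent to $\curl\boldsymbol f=2\kappa(\boldsymbol f\cdot\tau)$. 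Since $\curl\boldsymbol f=-\Delta\phi$ and $\boldsymbol f\cdot\tau=\pm\partial_\n\phi$, I would verify
\[
-\Delta\phi=\pm2\kappa\,\partial_\n\phi \quad\text{on }\Gamma .
\]
I would do this in local boundary coordinates $(s,d)$, with $d$ the signed distance, writing $\u=u_\tau\tau+u_n\n$ and similarly for $\v$. I would use $u_n=v_n=0$, the relations $\partial_\n u_n=-\partial_s u_\tau-\kappa u_n$ coming from the divergence condition, the slip relations $\partial_\n u_\tau=\kappa u_\tau$ coming from the Navier condition, and the analogous relations for $\v$. Expanding $\phi=u_nv_\tau-u_\tau v_n$ to second order in $d$, I expect the terms to pair up and cancel, leaving exactly the curvature relation. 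This is the delicate computation, and I would carefully track the sign conventions for $\kappa$ and $\tau$.

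Once it is established, both boundary integrals vanish and the lemma follows.
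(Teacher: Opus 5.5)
The paper gives no proof of this lemma; it is quoted from Arada--Cipriano, so there is no in-paper argument to compare with. Your reduction is correct and yields a self-contained proof. Set $\boldsymbol f=\curl(\u\times\v)=(\v\cdot\nabla)\u-(\u\cdot\nabla)\v$, so that $\curl\Upsilon_\alpha(\u\times\v)=\Upsilon_\alpha(\boldsymbol f)$. The claim is then $(\Upsilon_\alpha(\boldsymbol f),\w)=(\boldsymbol f,\Upsilon_\alpha(\w))$. Once $\boldsymbol f\in\Wb$ is known, this is \eqref{eqn-IbP} used twice: $(\Upsilon_\alpha(\boldsymbol f),\w)=(\boldsymbol f,\w)_{\Vb^1(\O)}=(\boldsymbol f,\Upsilon_\alpha(\w))$. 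That is what your Green-formula computation expresses. Your observations that $\phi|_\Gamma=0$, and hence $\boldsymbol f\cdot\n=0$, are also right.

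The step you leave as an expectation is the entire content of the lemma, so it must be carried out. It does close, but one detail decides the outcome.

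Use coordinates $x=x_\Gamma(s)+d\,\n(s)$ with $\partial_s\n=\kappa\tau$, $h=1+\kappa d$, $u_\tau=\u\cdot\tau(s)$, $u_n=\u\cdot\n(s)$, and $\kappa'=\partial_s\kappa$.

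Since $f_\tau:=\boldsymbol f\cdot\tau=-\partial_d\phi$ throughout the collar, the slip condition $\partial_d f_\tau=\kappa f_\tau$ for the tangential field $\boldsymbol f$ reads $\partial_d^2\phi=\kappa\,\partial_d\phi$ on $\Gamma$. This is equivalent to your $\Delta\phi=2\kappa\,\partial_\n\phi$, because $\Delta\phi=\partial_d^2\phi+\kappa\,\partial_d\phi$ on $\Gamma$ when $\phi|_\Gamma=0$.

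For $\u$, the slip condition is $\partial_d u_\tau=\kappa u_\tau$ on $\Gamma$, hence $\partial_s\partial_d u_\tau=\partial_s(\kappa u_\tau)$ there. To get $\partial_d^2u_n$ you must differentiate the \emph{exact} divergence identity $\partial_s u_\tau+\kappa u_n+h\,\partial_d u_n=0$ in $d$, not its boundary trace. On $\Gamma$ this gives
\[
\partial_d u_n=-\partial_s u_\tau,\qquad
\partial_d^2u_n=-\partial_s(\kappa u_\tau)-2\kappa\,\partial_d u_n=\kappa\,\partial_s u_\tau-\kappa' u_\tau ,
\]
and likewise for $\v$. Using $u_n=v_n=0$ on $\Gamma$,
\begin{align*}
\partial_d\phi&=u_\tau\partial_s v_\tau-v_\tau\partial_s u_\tau,\\
\partial_d^2\phi&=v_\tau\,\partial_d^2u_n+2\,\partial_du_n\,\partial_dv_\tau-2\,\partial_du_\tau\,\partial_dv_n-u_\tau\,\partial_d^2v_n
=\kappa\,(u_\tau\partial_s v_\tau-v_\tau\partial_s u_\tau)=\kappa\,\partial_d\phi .
\end{align*}
The $\kappa'$ terms cancel by antisymmetry. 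Hence $\boldsymbol f\in\Wb$, and the lemma follows.

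The delicate point is this. If you differentiate the trace relation $\partial_d u_n=-\partial_s u_\tau-\kappa u_n$ as though it held off $\Gamma$, the coefficient $2\kappa$ above becomes $\kappa$. You would then get $\partial_d^2\phi=2\kappa\,\partial_d\phi$, i.e.\ $\Delta\phi=3\kappa\,\partial_\n\phi$, which is false.

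Finally, you do not need a density argument. Approximating elements of $\wiWb$ by smooth fields that keep both boundary conditions on a $C^{4,1}$ domain is not immediate anyway. Every quantity above is a trace of at most second derivatives of $H^3$ fields, so the identities hold directly in $H^{1/2}(\Gamma)$.
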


In view of Lemma \ref{lem:for:adjoint}, we have the following form of adjoint system:
 \begin{equation}\label{adjoint:regular}
 	\left\{\begin{aligned}
 		-\partial_t\varrho - \u\cdot\nabla\varrho
 		&= \xi(\pfrak \cdot e_2)  &&\text{in }(0,T_{\xi})\times\O,\\
 		- \partial_t\Upsilon_\alpha(\pfrak)  - \nu\Delta\pfrak  - \curl\Upsilon_\alpha(\u)\times\pfrak  + \curl \Upsilon_\alpha(\u\times\pfrak)
 		+ \nabla \pi
 		& = \theta\nabla\varrho,
 		&&   \text{in }(0,T_{\xi})\times\O,\\
 		\diver\pfrak&=0, &&\text{in }(0,T_{\xi})\times\O,\\
 	\varrho(T_\xi) =\Lambda^{-2}\theta(T_{\xi}),\qquad	\pfrak(T_\xi)& =\boldsymbol{0}, &&\text{in }\O,\\
 				\pfrak\cdot\n  =0 \;\; \text{ and }  \;\; [\n \cdot \Db(\pfrak)  ]\cdot\tau   & = 0, &&\text{on }(0,T_{\xi})\times\Gamma.
 	\end{aligned}
 	\right.
 \end{equation}

%
%

In the next result, we show the regularity estimates for the solution of the adjoint system \eqref{adjoint:regular} which are helpful to obtain the uniqueness of optimal solution of our control problem.
\begin{lemma}\label{thm:adjoint:regularity}
	Assume that $\beta=0$, $(\theta_0, \u_0)\in H^{1}(\Omega)\times \Vb^3(\O)$ and $\g\in\Uad$. Then, there exists a unique solution pair $(\varrho , \pfrak)\in L^\infty(0,T_\xi;H^2(\O)) \times L^\infty(0,T_\xi;\Wb)$ with $(\partial_t\varrho , \partial_t\pfrak)\in L^\infty(0,T_\xi;H^1(\O)) \times L^2(0,T_\xi;\Vb^1(\O))$ satisfying the adjoint system \eqref{adjoint:regular}.
\end{lemma}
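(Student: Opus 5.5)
We work with the time-reversed system exactly as in the proof of Lemma~\ref{lem:adjoint}: set $\hat\pfrak(t)=\pfrak(T_\xi-t)$, $\hat\varrho(t)=\varrho(T_\xi-t)$, $\hat\u(t)=\u(T_\xi-t)$, $\hat\theta(t)=\theta(T_\xi-t)$. This turns \eqref{adjoint:regular} into a forward problem with $\hat\varrho(0)=\Lambda^{-2}\theta(T_\xi)$ and $\hat\pfrak(0)=0$. We use Galerkin approximations, with the scalar basis $\{\varphi_j\}$ and the velocity basis $\{\e_j\}\subset\wiWb$ of Subsection~\ref{Galerkin} (with $\beta=0$). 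All a priori estimates are derived at the Galerkin level with constants independent of $k$, and the limit is taken as in Lemma~\ref{lem:adjoint}. Uniqueness is immediate from Lemma~\ref{lem:adjoint}, since a solution in the stronger class is in particular a weak solution of \eqref{eq:adjoint-scalar}--\eqref{weak:adjoint:p}. The existence part then reduces to three estimates.

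\textbf{Step 1 ($H^1$ level).} This is the estimate \eqref{eq:pfrak:varrho-H1-est} already obtained; with $\zeta=0$ the source term disappears. It gives $\hat\pfrak\in L^\infty(0,T_\xi;\Vb^1(\O))$ and $\hat\varrho\in L^\infty(0,T_\xi;H^1(\O))$.

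\textbf{Step 2 ($\Wb$ level for $\hat\pfrak$).} Test the momentum equation with $\hat\Y:=\mathbb P\Upsilon_\alpha(\hat\pfrak)$, mirroring the $H^2$ estimate \eqref{time-H2}--\eqref{H2-energy} of Lemma~\ref{lem:linearized}, and close with \eqref{lem:1.6:1}.
\begin{itemize}
\item The term $(\curl\Upsilon_\alpha(\hat\u)\times\hat\pfrak,\hat\Y)$ is bounded by $C\|\hat\u\|_{H^3}\|\hat\pfrak\|_{L^\infty}\|\hat\Y\|_{L^2}$.
\item The source term is bounded by $|(\hat\theta\nabla\hat\varrho,\hat\Y)|\le \|\hat\theta\|_{L^4}\|\nabla\hat\varrho\|_{L^4}\|\hat\Y\|_{L^2}$, which requires $\hat\varrho\in H^2$ (or at least $W^{1,4}$). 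The velocity and scalar estimates must therefore be coupled.
\item The new term is $(\curl\Upsilon_\alpha(\hat\u\times\hat\pfrak),\hat\Y)$. The product $\hat\u\times\hat\pfrak$ is a scalar, so $\curl\Upsilon_\alpha(\cdot)$ involves third derivatives of $\hat\u\times\hat\pfrak$ and hence third derivatives of $\hat\pfrak$.
\end{itemize}
This third bullet is the main obstacle. I would first use Lemma~\ref{lem:for:adjoint} to rewrite it as $b(\hat\pfrak,\hat\u,\Upsilon_\alpha(\cdot))-b(\hat\u,\hat\pfrak,\Upsilon_\alpha(\cdot))$ with the test function in the $\Upsilon_\alpha$-slot. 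At the Galerkin level I would then take the test function $\boldsymbol\phi$ with $\Upsilon_\alpha$-part $\hat\Y$, i.e. test with $\hat\pfrak$ itself in the $\wiWb$ inner product \eqref{wiWb:inner:product}. In the resulting term $b(\hat\u,\hat\pfrak,\Upsilon_\alpha(\hat\pfrak))$, the top-order part is $-\alpha b(\hat\u,\hat\pfrak,\Delta\hat\pfrak)$. After one integration by parts it becomes $\alpha\,b(\hat\u,\nabla\hat\pfrak,\nabla\hat\pfrak)$-type terms, which vanish by antisymmetry, plus terms of the form $\nabla\hat\u\,\nabla\hat\pfrak\,\nabla\hat\pfrak$ bounded by $\|\hat\u\|_{H^3}\|\hat\pfrak\|_{H^2}^2$.

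The boundary terms produced by this integration by parts are controlled by the condition $[\n\cdot\Db(\hat\pfrak)]\cdot\tau=0$, which is why $\beta=0$ is needed, together with Lemma~\ref{vorticit-boundary} with $\g=0$, which expresses $\curl\hat\pfrak$ on $\Gamma$ through $2\kappa(\hat\pfrak\cdot\tau)$. What remains is a boundary integral of order one in $\hat\pfrak$, estimated by trace inequalities as $C\|\hat\u\|_{H^3}\|\hat\pfrak\|_{H^2}^2$. The target inequality is
\[
\frac{d}{dt}\|\hat\Y\|_{L^2}^2\le C(1+\|\hat\u\|_{H^3})\bigl(\|\hat\Y\|_{L^2}^2+\|\hat\pfrak\|_{\Vb^1}^2\bigr)+C\|\hat\theta\|_{H^1}^2\|\hat\varrho\|_{H^2}^2 .
\]

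\textbf{Step 3 ($H^2$ level for $\hat\varrho$).} Apply $\nabla^2$ to the transport equation, working with smooth approximations of the initial datum as in the remark on high-order calculations. This gives
\[
\frac{d}{dt}\|\hat\varrho\|_{H^2}^2\le C(1+\|\hat\u\|_{H^3})\|\hat\varrho\|_{H^2}^2+C\xi\|\hat\pfrak\|_{H^2}^2 ,
\]
using $\|\nabla^2\hat\u\|_{L^4}\le C\|\hat\u\|_{H^3}$ and $\hat\u\cdot\n=0$. The initial datum is admissible because $\Lambda^{-2}\theta(T_\xi)\in H^3(\O)\subset H^2(\O)$ by Neumann elliptic regularity.

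\textbf{Closing the argument.} Add Steps 1--3 and apply Gronwall's inequality, using $\hat\u\in L^\infty(0,T_\xi;H^3)$ and $\hat\theta\in L^\infty(0,T_\xi;H^1)$. This gives the $L^\infty$ bounds in $H^2\times\Wb$. The time derivatives follow from the equations:
\begin{itemize}
\item $\partial_t\hat\varrho=\hat\u\cdot\nabla\hat\varrho+\xi\hat\pfrak\cdot e_2\in L^\infty(0,T_\xi;H^1)$, since $H^3\cdot H^1$ lies in $H^1$;
\item $\partial_t\hat\pfrak\in L^2(0,T_\xi;\Vb^1)$ is obtained by testing with $\partial_t\hat\pfrak$, as in \eqref{weak-z}.
\end{itemize}
Passing to the Galerkin limit then yields the strong form \eqref{adjoint:regular}, including the boundary condition $[\n\cdot\Db(\pfrak)]\cdot\tau=0$. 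This condition now makes sense because $\pfrak\in\Wb$ and Lemma~\ref{lem:for:adjoint} applies.
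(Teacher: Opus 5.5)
Your overall architecture matches the paper's: time reversal, Galerkin approximation, coupling the $H^2$ transport estimate for $\hat\varrho$ with a $\Wb$-level estimate for $\hat\pfrak$ (forced by $\|\hat\theta\nabla\hat\varrho\|_{L^2}\le C\|\hat\theta\|_{H^1}\|\hat\varrho\|_{H^2}$), Gronwall, time derivatives from the equations, and uniqueness from Lemma~\ref{lem:adjoint}. The paper does not derive the $\Wb$-level bound for $\hat\pfrak$; it imports it from \cite[Proposition~5.4]{AradaCipriano2015}. There is a genuine gap in your own derivation of that bound in Step~2. You correctly flag that $(\curl\Upsilon_\alpha(\hat\u\times\hat\pfrak),\hat\Y)$ contains third derivatives of $\hat\pfrak$, but your remedy never actually treats this term. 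Lemma~\ref{lem:for:adjoint} puts $\Upsilon_\alpha$ on the test function. Applying it with test function $\hat\Y$ would need $\hat\Y\in\Wb$, which fails in general for $\mathbb P\Upsilon_\alpha(\hat\pfrak)$, and would produce $\Upsilon_\alpha(\hat\Y)$, i.e.\ four derivatives of $\hat\pfrak$. Switching to the test function $\hat\pfrak$ does not help, because one test function cannot be $\hat\pfrak$ in the nonlinear term and $\hat\Y$ in the time-derivative and viscous terms. Testing everything with $\hat\pfrak$ returns $\frac12\frac{d}{dt}\|\hat\pfrak\|_{\Vb^1}^2$. The term $b(\hat\u,\hat\pfrak,\Upsilon_\alpha(\hat\pfrak))$ that you integrate by parts is exactly the one in the $\Vb^1$ identity of Step~1, already handled by \eqref{curl:estimates}. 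Testing in the $\wiWb$ inner product would instead give an $H^3$-level identity. Either way, your displayed inequality for $\frac{d}{dt}\|\hat\Y\|_{L^2}^2$, which is the one genuinely new estimate of the lemma, is asserted rather than derived.

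The missing idea is the transport structure of the bad term. For divergence-free fields, $\curl(\hat\u\times\hat\pfrak)=(\hat\pfrak\cdot\nabla)\hat\u-(\hat\u\cdot\nabla)\hat\pfrak$, hence
\[
\curl\Upsilon_\alpha(\hat\u\times\hat\pfrak)=-(\hat\u\cdot\nabla)\Upsilon_\alpha(\hat\pfrak)+\mathbf R,
\qquad
\|\mathbf R\|_{L^2(\O)}\le C\|\hat\u\|_{H^3(\O)}\|\hat\pfrak\|_{H^2(\O)}.
\]
Paired with $\hat\Y$, the top-order part splits into $((\hat\u\cdot\nabla)\hat\Y,\hat\Y)=0$ plus $((\hat\u\cdot\nabla)(I-\mathbb P)\Upsilon_\alpha(\hat\pfrak),\hat\Y)$. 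The latter is bounded by $C\|\hat\u\|_{H^2}\|\hat\pfrak\|_{H^2}\|\hat\Y\|_{L^2}$ via \cite[Lemma~3.3]{ChemetovCipriano2017}, which applies since $\hat\pfrak\in\Wb$. Combined with \eqref{lem:1.6:1}, this gives your target inequality without any boundary integrals. Alternatively, cite the estimate as the paper does.

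The same oversight affects your bound for $\partial_t\hat\pfrak$. Unlike in \eqref{weak-z}, in $\mathcal A_{\hat\u}(\hat\pfrak,\partial_t\hat\pfrak)$ the operator $\Upsilon_\alpha$ acts on the test function $\partial_t\hat\pfrak$, so that computation does not transfer verbatim. You must first move derivatives off $\partial_t\hat\pfrak$ using $\hat\pfrak\in H^2$; the paper again relies on \cite[Proposition~5.4]{AradaCipriano2015} for this step.
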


\begin{proof}
	Recall that there exists a pair  $(\varrho , \pfrak)\in L^\infty(0,T_\xi;H^1(\O)) \times L^\infty(0,T_\xi;\Vb^1(\O))$ with $(\partial_t\varrho , \partial_t\mathcal{M}_{\alpha\beta} \pfrak)\in L^\infty(0,T_\xi;L^2(\O)) \times L^2(0,T_\xi;\Wb')$ which satisfies system \eqref{adjoint:regular}. We provide only the energy estimates for the additional regularity that can be established at the level of the Faedo-Galerkin approximation.

	We recall the notation $(\hat\pfrak,\hat\varrho,\hat\u,\hat\theta)(t) =(\pfrak,\varrho,\u,\theta)(T_\xi -t)$.  Note that $\theta(T_\xi)\in H^1(\O)$ implies $\hat\varrho(0)\in H^3(\O)$.
	
	For every multi-index \(\gamma\) with \(|\gamma|\le2\), applying
	\(D^\gamma\) to the first equation in \eqref{eq:adjoint:hat} gives
	\begin{align*}
		\partial_tD^\gamma \hat\varrho - \u\cdot\nabla D^\gamma \hat\varrho
		= [D^\gamma,\u\cdot\nabla]\hat\varrho
		+ \xi D^\gamma(\hat\pfrak\cdot\boldsymbol e_2).
	\end{align*}
	The transport term again vanishes after pairing with \(D^\gamma \hat\varrho\).
	The two-dimensional commutator estimate
	\begin{equation}\label{eq:commutator}
		\sum_{|\gamma|\leq2}
		\|{[D^\gamma,\u\cdot\nabla]\hat\varrho}\|_{L^2(\O)}
		\leq C \|{\u}\|_{H^3(\O)} \|{\hat\varrho}\|_{H^2(\O)}
	\end{equation}
	follows directly by expanding the derivatives.  For example, the highest
	terms are bounded by
	\begin{align*}
		\|\nabla\u\|_{L^\infty(\O)} \|{D^2\hat\varrho}\|_{L^2(\O)}
		+ \|{D^2\u}\|_{L^4(\O)} \|{\nabla \hat\varrho}\|_{L^4(\O)}
		\leq C\|{\u}\|_{H^3(\O)} \|{\hat\varrho}\|_{H^2(\O)}.
	\end{align*}
	Consequently,
	\begin{equation}\label{eq:r-H2}
		\frac{d}{dt}\|{\hat\varrho}\|_{H^2(\O)}^2
		\leq C (1+\|{\u}\|_{H^3(\O)})\|\hat\varrho\|_{H^2(\O)}^2
		+ C\xi\|\hat\pfrak\|_{H^2(\O)}^2.
	\end{equation}

At the Galerkin level, apply the generalized-vorticity estimate used in the
proof of \cite[Proposition~5.4, p.24]{AradaCipriano2015}.  That calculation only
requires the distributed right-hand side of the velocity adjoint to belong to
$L^2(\O)$.  In the present system this right-hand side is
$\hat\theta\nabla\hat\varrho$, and the two-dimensional embedding
$H^1(\O)\hookrightarrow L^4(\O)$ gives
\[
\|\hat\theta\nabla\hat\varrho\|_{L^2(\O)}
\leq C\|\hat\theta\|_{H^1(\O)}
\|\hat\varrho\|_{H^2(\O)}.
\]
Consequently, the same calculation yields
\begin{align}
	\frac{d}{dt}\|\hat\pfrak\|_{\Wb}^2
	\leq C\bigl(1+\|\hat\u\|_{H^3(\O)}\bigr)
	\|\hat\pfrak\|_{\Wb}^2
	+C\|\hat\theta\|_{H^1(\O)}^2
	\|\hat\varrho\|_{H^2(\O)}^2.                         \label{eq:p-H2}
\end{align}
Combining this estimate with \eqref{eq:r-H2}, and using the uniform state
bounds, gives
\[
\frac{d}{dt}\left(
\|\hat\varrho\|_{H^2}^2+\|\hat\pfrak\|_{\Wb}^2\right)
\leq C_M\left(
\|\hat\varrho\|_{H^2}^2+\|\hat\pfrak\|_{\Wb}^2\right).
\]
Since $\hat\varrho(0)=\Lambda^{-2}\theta(T_\xi)\in H^3(\O)$ and
$\hat\pfrak(0)=\boldsymbol{0}$, Gronwall's inequality gives
\[
(\hat\varrho,\hat\pfrak)
\in L^\infty(0,T_\xi;H^2(\O))
\times L^\infty(0,T_\xi;\Wb).
\]

From \eqref{eq:adjoint:hat}, we have 
\begin{align}
	\|\partial_t\nabla\hat\varrho\|_{L^2(\O)} & \leq \|\hat\u\|_{L^{\infty}(\O)} \|\hat\varrho\|_{H^2(\O)} + \|\nabla\hat\u\|_{L^{\infty}(\O)}\|\nabla\hat\varrho\|_{L^2(\O)} + \xi \|\nabla\hat\pfrak\|_{L^2(\O)}
	\nonumber\\ 
	& \leq  C \|\hat\u\|_{H^3(\O)} \|\hat\varrho\|_{H^2(\O)} + \xi \|\nabla\hat\pfrak\|_{L^2(\O)},
\end{align}
which implies $\partial_t\hat\varrho \in L^\infty(0,T_\xi;H^1(\O))$. Since $\hat\theta\in L^{\infty}(0,T_\xi;H^1(\O))$ and $\hat\varrho\in L^{\infty}(0,T_\xi;H^2(\O))$, from \cite[Proposition 5.4, p.25]{AradaCipriano2015}, we immediately get that $\partial_t\pfrak \in  L^2(0,T_\xi;\Vb^1(\O))$. This completes the proof.
\end{proof}

Let us restate the coupled optimality system for the case $\beta=\zeta=0$:
\begin{center}
	\fbox{ \scriptsize
		\begin{minipage}[b]{\textwidth}
			\begin{equation}\label{CS-1}\tag{State system (Forward)}
				\left\{
				\begin{aligned}
					\partial_t\theta+ \u \cdot\nabla\theta&=0,
					&&  \text{in }(0,T_{\xi})\times\O,\\
					 \partial_t\Upsilon_\alpha(\u) - \nu\Delta\u
					+\curl\Upsilon_\alpha(\u)\times\u
					+\nabla p
					&= \xi\theta e_2,
					&&\text{in }(0,T_{\xi})\times\O,\\
					\diver\u &=0,
					&&  \text{in }(0,T_{\xi})\times\O,\\
					\theta(0)  = \theta_0, \qquad \u(0) & = \u_0, &&  \text{in }\O,\\
					\u\cdot\n   =0, \;\;\; \text{ and } \;\;\; [ \n \cdot \Db(\u) ]\cdot\tau  & = \g\cdot\tau, &&  \text{on }(0,T_\xi)\times \Gamma,
				\end{aligned}
				\right.
			\end{equation}
			\begin{equation}\label{CS-2}\tag{Adjoint system (Backward)}
				\left\{\begin{aligned}
					-\partial_t\varrho - \u\cdot\nabla\varrho
					&= \xi(\pfrak \cdot e_2)  &&\text{in }(0,T_{\xi})\times\O,\\
					- \partial_t\Upsilon_\alpha(\pfrak)  - \nu\Delta\pfrak   - \curl\Upsilon_\alpha(\u)\times\pfrak   + \curl \Upsilon_\alpha(\u\times\pfrak)
					 + \nabla \pi
					& = \theta\nabla\varrho,
					&&   \text{in }(0,T_{\xi})\times\O,\\
					\diver\pfrak&=0, &&\text{in }(0,T_{\xi})\times\O,\\
				\varrho(T_\xi) =\Lambda^{-2}\theta(T_{\xi}), \qquad	\pfrak(T_\xi)& =\boldsymbol{0}, &&\text{in }\O,\\
					\pfrak\cdot\n   =0 \;\;\; \text{ and } \;\;\; [ \n \cdot \Db(\pfrak)] \cdot\tau  & = 0, &&  \text{on }(0,T_\xi)\times \Gamma,
				\end{aligned}
				\right.
			\end{equation}
			\begin{align}\label{CS-3}\tag{Optimality condition}
			 \gamma (\g,\h-\g)_{\Ucal} -\mathcal{D}(\cR(\h-\g), \theta, \u, \varrho, \pfrak)  \geq 0, \text{ for all } \h\in \Uad,
		\end{align}
	where $\mathcal{D}(\cdot, \cdot, \cdot, \cdot, \cdot)$ is given by \eqref{eq:boundary-functional}.
		\end{minipage}
	}
\end{center}

  Suppose that $\g_1,\g_2$ are two optimal control variables for \eqref{eqn:control:problem} and $(\theta_1,\u_1),(\theta_2,\u_2)$ are the corresponding optimal states with the adjoint states $(\varrho_1,\pfrak_1),(\varrho_2,\pfrak_2)$, respectively.

  Define
  \begin{equation}\label{differences}
  	\widebar\theta :=\theta_1-\theta_2,
  	\quad
  	\widebar\u:=\u_1-\u_2, \quad \widebar{\g}:= \g_1-\g_2, \quad  \widebar{\varrho}:=\varrho_1-\varrho_2, \quad \widebar{\pfrak}:=\pfrak_1-\pfrak_2, \quad \widebar{\Gbf}:=\cR\widebar{\g}.
  \end{equation}

In the following result, we establish a stability estimate for the adjoint system with respect to the control variable.
	\begin{lemma}[Adjoint stability]\label{lem:adjoint-stability}
Assume $\beta=\zeta=0$. Let $\g_1,\g_2\in\Uad$, and let
$(\theta_i,\u_i,\varrho_i,\pfrak_i)$ be the corresponding state and adjoint
variables.  Then 
	\begin{align}\label{varrho:pfrack:dif}
		\|\varrho_1-\varrho_2\|_{L^\infty(0,T_\xi;L^2(\O))}
		+\|\pfrak_1-\pfrak_2\|_{L^\infty(0,T_\xi;\Vb^1(\O))}
		&\leq C_M\|\g_1-\g_2\|_{\Ucal},
	\end{align} 
where $C_M>0$ is a constant independent of $\gamma$ and of the particular controls.
\end{lemma}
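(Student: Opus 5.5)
We will derive the difference system for $(\widebar\varrho,\widebar\pfrak)$ from \eqref{adjoint:regular}, in time-reversed form, and close a Gronwall estimate at the level $L^2\times\Vb^1$. The forcing terms will be controlled by the state stability of Lemma~\ref{thm:stability}$(i)$ (applied with $\g=\g_2$, $r\h=\widebar\g$), which gives $\|\widebar\theta\|_{L^\infty(0,T_\xi;L^2)}+\|\widebar\u\|_{L^\infty(0,T_\xi;H^2)}\le C_M\|\widebar\g\|_{\Ucal}$, and by the higher regularity of Lemma~\ref{thm:adjoint:regularity}. The latter gives $\varrho_i\in L^\infty(0,T_\xi;H^2)$ and $\pfrak_i\in L^\infty(0,T_\xi;\Wb)$ with bounds uniform over $\Uad$, because the constants depend only on the uniform state bounds \eqref{eq:uniform-u}--\eqref{eq:uniform-theta}.

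The differences satisfy
\[
-\partial_t\widebar\varrho-\u_1\cdot\nabla\widebar\varrho=\widebar\u\cdot\nabla\varrho_2+\xi\,\widebar\pfrak\cdot e_2,\qquad \widebar\varrho(T_\xi)=\Lambda^{-2}\widebar\theta(T_\xi),
\]
together with the velocity adjoint
\[
-\partial_t\Upsilon_\alpha(\widebar\pfrak)-\nu\Delta\widebar\pfrak-\curl\Upsilon_\alpha(\u_1)\times\widebar\pfrak+\curl\Upsilon_\alpha(\u_1\times\widebar\pfrak)+\nabla\widebar\pi=\theta_1\nabla\widebar\varrho+\widebar\theta\nabla\varrho_2+\mathcal E,
\]
where
\[
\mathcal E:=\curl\Upsilon_\alpha(\widebar\u)\times\pfrak_2-\curl\Upsilon_\alpha(\widebar\u\times\pfrak_2),
\]
and $\widebar\pfrak(T_\xi)=0$. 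We test the scalar equation by $\widebar\varrho$. Since $\u_1$ is divergence free with zero normal trace, the transport term drops, and we obtain the bound
\[
\|\widebar\u\|_{L^\infty}\|\nabla\varrho_2\|_{L^2}\|\widebar\varrho\|_{L^2}+\xi\|\widebar\pfrak\|\,\|\widebar\varrho\|.
\]
The terminal datum satisfies $\|\Lambda^{-2}\widebar\theta(T_\xi)\|_{L^2}\le\|\widebar\theta(T_\xi)\|_{L^2}\le C_M\|\widebar\g\|_{\Ucal}$. We then test the velocity equation by $\widebar\pfrak$. This is legitimate since $\widebar\pfrak\in\Wb$ by Lemma~\ref{thm:adjoint:regularity}, and the step is exactly why $\beta=\zeta=0$ is assumed. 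Using \eqref{eqn-IbP}, the left side gives $-\frac12\frac{d}{dt}\|\widebar\pfrak\|_{\Vb^1}^2+2\nu\|\Db(\widebar\pfrak)\|^2$. By Lemma~\ref{lem:for:adjoint} and $\eqref{trilinear1}_2$, the two $\u_1$ terms combine into the same trilinear structure as in \eqref{eq:pfrak-H1-est}, and are bounded by $C\|\u_1\|_{H^3}\|\widebar\pfrak\|_{\Vb^1}^2$ via $\eqref{curl:estimates}_3$.

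The main obstacle is the cross term $(\mathcal E,\widebar\pfrak)$, because $\widebar\u$ is only controlled in $H^2$, while $\curl\Upsilon_\alpha(\widebar\u)$ carries three derivatives. We will apply Lemma~\ref{lem:for:adjoint} and $\eqref{trilinear1}_2$ in the weak form to rewrite it as
\[
b(\widebar\pfrak,\pfrak_2,\Upsilon_\alpha(\widebar\u))-b(\pfrak_2,\widebar\pfrak,\Upsilon_\alpha(\widebar\u))+b(\widebar\pfrak,\widebar\u,\Upsilon_\alpha(\pfrak_2))-b(\widebar\u,\widebar\pfrak,\Upsilon_\alpha(\pfrak_2)),
\]
so that only $\Upsilon_\alpha(\widebar\u)\in L^2$ and $\Upsilon_\alpha(\pfrak_2)\in L^2$ appear. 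Each term is then bounded by $C\|\widebar\u\|_{H^2}\|\pfrak_2\|_{H^2}\|\widebar\pfrak\|_{H^1}$ using $H^1\hookrightarrow L^4$ and $H^2\hookrightarrow L^\infty$. If Lemma~\ref{lem:for:adjoint} requires $\widebar\u\in\wiWb$, we will note that $\widebar\u\in L^\infty(0,T_\xi;H^3)$ and that the identity is purely integration by parts, with boundary contributions vanishing for $\beta=0$ thanks to the homogeneous slip condition on $\widebar\pfrak$. The traction mismatch of $\widebar\u$ enters only through $\widebar\g$, and can be absorbed using $\|\widebar\Gbf\|_{H^3}$.

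It remains to bound the source terms. The coupling term satisfies
\[
(\theta_1\nabla\widebar\varrho,\widebar\pfrak)=-(\widebar\varrho\,\nabla\theta_1,\widebar\pfrak)\le\|\widebar\varrho\|_{L^2}\|\nabla\theta_1\|_{L^2}\|\widebar\pfrak\|_{L^\infty}.
\]
Here $\|\widebar\pfrak\|_{L^\infty}$ is controlled through \eqref{infty:inequality} and the uniform $\Wb$ bound, or alternatively directly by $\|\widebar\pfrak\|_{H^2}\le C$. If this route fails, we will run an $\varepsilon$-Gronwall argument as in the uniqueness proof. The other source term satisfies $(\widebar\theta\nabla\varrho_2,\widebar\pfrak)\le\|\widebar\theta\|_{L^2}\|\nabla\varrho_2\|_{L^4}\|\widebar\pfrak\|_{L^4}$. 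Summing the two energy identities, integrating backward from $T_\xi$, and applying Gronwall's inequality will give \eqref{varrho:pfrack:dif} with $C_M$ independent of $\gamma$, since neither the state nor the adjoint equations involve $\gamma$.
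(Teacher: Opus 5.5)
Your skeleton (time reversal, testing the scalar difference by $\widebar{\varrho}$ and the velocity difference by $\widebar{\pfrak}\in\Wb$, Gronwall, then the state stability of Lemma~\ref{thm:stability}) is the same as the paper's. As set up, however, it does not give a Lipschitz bound, and it fails exactly at the term you flagged.

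\textbf{The coupling term.} You control $\widebar{\varrho}$ only in $L^2$ and $\theta_1$ only in $L^\infty(0,T_\xi;H^1(\O))$. Then $(\theta_1\nabla\widebar{\varrho},\widebar{\pfrak})=-(\widebar{\varrho}\,\nabla\theta_1,\widebar{\pfrak})$ needs $\|\widebar{\pfrak}\|_{L^\infty}$, which the $\Vb^1$-energy does not control. Neither of your fallbacks repairs this. Set $U:=\|\widebar{\varrho}\|_{L^2}^2+\|\widebar{\pfrak}\|_{\Vb^1(\O)}^2$.

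The uniform $\Wb$-bound gives a contribution $C\|\widebar{\varrho}\|_{L^2}\le C\sqrt U$. Then $U'\le CU+C\|\widebar{\g}\|_{\Ucal}^2+C\sqrt U$ (in reversed time), which does not even force $U\to0$ as $\widebar{\g}\to0$.

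The inequality \eqref{infty:inequality} gives $\frac{C}{\sqrt\eps}U^{1-\eps/2}$ instead. The $\eps$-Gronwall argument then yields only bounds of the type $U\le C\|\widebar{\g}\|_{\Ucal}^2\exp\bigl(C\sqrt{\log(1/\|\widebar{\g}\|_{\Ucal})}\bigr)$. That trick gives $U\equiv0$ in the uniqueness part of Theorem~\ref{Wellposednes:state:lift} only because the data vanish there. The loss matters: Theorem~\ref{thm:optimal-uniqueness} must absorb a bound of exactly the form $C\|\widebar{\g}\|_{\Ucal}^2$ into $\gamma\|\widebar{\g}\|_{\Ucal}^2$.

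The paper closes this term at the $L^2$ level by asserting $\theta_2\in L^\infty(0,T_\xi;H^2(\O))$. The state theory with $\theta_0\in H^1(\O)$ does not supply this, so your suspicion was justified.

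\textbf{The fix: estimate $\widebar{\varrho}$ in $H^1$.} Differentiate the scalar difference equation and test by $\nabla\widebar{\varrho}$. This is legitimate because $\widebar{\varrho}\in L^\infty(0,T_\xi;H^2(\O))$ by Lemma~\ref{thm:adjoint:regularity}. The commutator is bounded by $\|\nabla\u_1\|_{L^\infty}\|\nabla\widebar{\varrho}\|_{L^2}^2$, and the coupling $\xi\nabla(\widebar{\pfrak}\cdot e_2)$ by $\|\widebar{\pfrak}\|_{H^1}$. The forcing satisfies
\[
\|\nabla(\widebar{\u}\cdot\nabla\varrho_2)\|_{L^2}\leq\|\nabla\widebar{\u}\|_{L^4}\|\nabla\varrho_2\|_{L^4}+\|\widebar{\u}\|_{L^\infty}\|D^2\varrho_2\|_{L^2}\leq C\|\widebar{\u}\|_{H^2}\|\varrho_2\|_{H^2},
\]
and the terminal value satisfies $\|\Lambda^{-2}\widebar{\theta}(T_\xi)\|_{H^1}\leq C\|\widebar{\theta}(T_\xi)\|_{L^2}\leq C_M\|\widebar{\g}\|_{\Ucal}$. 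Then $|(\theta_1\nabla\widebar{\varrho},\widebar{\pfrak})|\leq C\|\theta_1\|_{H^1}\|\widebar{\varrho}\|_{H^1}\|\widebar{\pfrak}\|_{H^1}$. Gronwall for $\|\widebar{\varrho}\|_{H^1}^2+\|\widebar{\pfrak}\|_{\Vb^1(\O)}^2$ now closes and gives \eqref{varrho:pfrack:dif}, even with the $H^1$ norm on $\widebar{\varrho}$.

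\textbf{The cross term.} Your rewriting of $(\mathcal E,\widebar{\pfrak})$ is not an identity. By Lemma~\ref{lem:for:adjoint}, or directly from the weak form, the second half of $\mathcal E$ contributes $-b(\pfrak_2,\widebar{\u},\Upsilon_\alpha(\widebar{\pfrak}))+b(\widebar{\u},\pfrak_2,\Upsilon_\alpha(\widebar{\pfrak}))$. Here $\Upsilon_\alpha$ falls on $\widebar{\pfrak}$, not on $\pfrak_2$, so these terms contain $\Delta\widebar{\pfrak}$ and cannot be bounded naively by the $\Vb^1$-energy. The lemma's hypotheses also fail here: $\widebar{\u}$ carries the traction $\widebar{\g}$, and $\pfrak_2$ is only in $\Wb$.

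The bound you want is nevertheless true. Set $\boldsymbol k:=\pfrak_2\cdot\nabla\widebar{\u}-\widebar{\u}\cdot\nabla\pfrak_2$. It is divergence free and tangent on $\Gamma$, being a commutator of two divergence-free tangent fields. It also satisfies $\|\boldsymbol k\|_{H^1}\leq C\|\pfrak_2\|_{H^2}\|\widebar{\u}\|_{H^2}$, so $\boldsymbol k\in\Vb^1(\O)$. Since $\widebar{\pfrak}\in\Wb$ and $\beta=0$, \eqref{eqn-IbP} gives
\[
(\boldsymbol k,\Upsilon_\alpha(\widebar{\pfrak}))=(\widebar{\pfrak},\boldsymbol k)_{\Vb^1(\O)}\leq C\|\widebar{\pfrak}\|_{\Vb^1(\O)}\|\pfrak_2\|_{H^2}\|\widebar{\u}\|_{H^2}.
\]
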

\begin{proof}
%

	Let us define
	\begin{align*}
		& (\theta_1^* ,\u_1^* ,\theta_2^* ,\u_2^* ,\varrho_1^* ,\pfrak_1^* ,\varrho_2^* ,\pfrak_2^* , \widebar\theta^* , \widebar\u^* , \widebar{\varrho}^* ,\widebar{\pfrak}^*)(t)
		 := (\theta_1 ,\u_1 ,\theta_2 ,\u_2 ,\varrho_1 ,\pfrak_1 ,\varrho_2 ,\pfrak_2 , 
		  \widebar\theta , \widebar\u , \widebar{\varrho} ,\widebar{\pfrak})(T_\xi - t).
	\end{align*}	
	The pair $(\widebar{\varrho}^*,\widebar{\pfrak}^*)$ satisfies
	\begin{equation} 
		\left\{\begin{aligned}
			\partial_t\widebar\varrho^* - \u_1^*\cdot\nabla\widebar\varrho^* - \widebar{\u}^*\cdot\nabla\varrho_2^* - 
			\xi(\widebar\pfrak^* \cdot e_2) &= 0 &&\text{in }(0,T_{\xi})\times\O,\\
				\widebar\varrho^*(0) =\Lambda^{-2}\widebar\theta^*(0), \qquad \widebar\pfrak^*(0)& =\boldsymbol{0} &&\text{in }\O,
		\end{aligned}
		\right.
	\end{equation}
	and
	\begin{align} 
		&\left\langle\partial_t(\mathcal M_{\alpha\beta}\widebar\pfrak^*),
		\boldsymbol\phi\right\rangle_{\Wb'\times\Wb}
		+ 2\nu(\Db(\widebar\pfrak^*) , \Db(\boldsymbol \phi))  
		  +  b(\widebar\pfrak^* ,\boldsymbol \phi,\Upsilon_\alpha(\u_1^*)) 
		 + b(\pfrak_2^*,\boldsymbol \phi,\Upsilon_\alpha(\widebar\u^*))
		-b(\boldsymbol \phi,\widebar\pfrak^*,\Upsilon_\alpha(\u_1^*)) 
		\nonumber\\
		&  - b(\boldsymbol \phi,\pfrak_2^*,\Upsilon_\alpha(\widebar\u^*) )
		+b(\widebar\pfrak^*,\u_1^*,\Upsilon_\alpha(\boldsymbol \phi)) + b(\pfrak_2^*,\widebar\u^*,\Upsilon_\alpha(\boldsymbol \phi))
		   -b(\widebar\u^*,\pfrak_1^*,\Upsilon_\alpha(\boldsymbol \phi) )  -b(\u_2^*,\widebar\pfrak^*,\Upsilon_\alpha(\boldsymbol \phi))  \nonumber\\
		&=(\widebar\theta\nabla\varrho_1^*,\boldsymbol \phi) + (\theta_2^*\nabla\widebar\varrho^* , \boldsymbol \phi), 
	\end{align}
for every $\boldsymbol\phi\in\Wb$ and almost every $t\in(0,T_\xi)$.	 This provides 
	\begin{align}\label{eqn:varrho:dif}
		\frac12\frac{d}{dt} \|\widebar\varrho^*\|^2_{L^2(\O)} 
		& \leq \|\widebar\u^*\|_{L^{\infty}(\O)} \|\nabla\varrho_2^*\|_{L^2(\O)} \|\widebar\varrho^*\|_{L^2(\O)} + \xi \|\widebar\pfrak^*\|_{L^2(\O)} \|\widebar\varrho^*\|_{L^2(\O)}
		\nonumber\\
		& \leq 
		C [\|\widebar\u^*\|^2_{H^{2}(\O)} +  \|\widebar\pfrak^*\|^2_{L^2(\O)}  + \|\widebar\varrho^*\|^2_{L^2(\O)} ]
	\end{align}
	and 
	\begin{align}\label{eqn:pfrak:dif}
		&  \frac12 \frac{d}{dt} \|\widebar\pfrak^*\|^2_{\Vb^1(\O)} + 2\nu \|\Db(\widebar\pfrak^*)\|^2_{L^2(\O)} 
		\nonumber\\
		& = 
		- b(\pfrak_2^*,\widebar\pfrak^*,\Upsilon_\alpha(\widebar\u^*))
		+ b(\widebar\pfrak^*,\pfrak_2^*,\Upsilon_\alpha(\widebar\u^*))
		-b(\widebar\pfrak^*,\u_1^*,\Upsilon_\alpha(\widebar\pfrak^*)) + b(\pfrak_2^*,\widebar\u^*,\Upsilon_\alpha(\widebar\pfrak^*))
		\nonumber\\
		& \quad +b(\widebar\u^*,\pfrak_1^*,\Upsilon_\alpha(\widebar\pfrak^*))  +b(\u_2^*,\widebar\pfrak^*,\Upsilon_\alpha(\widebar\pfrak^*) )   + (\widebar\theta^*\nabla\varrho_1^*,\widebar\pfrak^*) -(\widebar\varrho^*\nabla\theta_2^*, \widebar\pfrak^*)
		\nonumber\\
		& \leq 
		C_M [ \|\widebar\u^*\|^2_{H^{2}(\O)} +\|\widebar\theta^*\|^2_{L^2(\O)}   +  \|\widebar\pfrak^*\|^2_{\Vb^1(\O)}  + \|\widebar\varrho^*\|^2_{L^2(\O)} ],
	\end{align}
	where we have used the fact that $\pfrak_2^*, \u_1^* , \pfrak_1^*, \u_2^*, \varrho_1^* $ and $\theta_2^*$ belong to $L^{\infty}(0,T_\xi ; H^2(\O))$. Now, combining \eqref{eqn:varrho:dif}-\eqref{eqn:pfrak:dif} and applying Gronwall's inequality, we obtain
	\begin{align*}
		\|\widebar\varrho^*\|^2_{L^{\infty}(0,T_\xi;L^2(\O))} +	\|\widebar\pfrak^*\|^2_{L^{\infty}(0,T_\xi; \Vb^1(\O))} \leq C_M [\|\widebar\theta^*\|^2_{L^{\infty}(0,T_\xi;L^2(\O))} +	\|\widebar\u^*\|^2_{L^{\infty}(0,T_\xi; H^2(\O))} ],
	\end{align*} 
	which, by Lemma \ref{thm:stability}, implies  \eqref{varrho:pfrack:dif}.
	This completes the proof.
\end{proof}

	\begin{theorem}[Uniqueness of the optimal control]
	\label{thm:optimal-uniqueness}
	Assume $\beta=\zeta=0$. There exists
	\[
	\gamma_0=\gamma_0(\O,T_\xi,\alpha,\nu,\xi,M,
	\theta_0,\u_0,\g_{\rm in})>0,
	\]
	independent of $\gamma$ and of the particular optimal controls, such that
	the optimal control is unique whenever $\gamma>\gamma_0$.
\end{theorem}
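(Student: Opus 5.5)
The argument is the usual monotonicity argument. Let $\g_1,\g_2$ be two optimal controls. Write the variational inequality \eqref{CS-3} for $\g_1$ with test control $\h=\g_2$, and for $\g_2$ with test control $\h=\g_1$; both choices are admissible because $\g_1,\g_2\in\Uad$. Adding the two inequalities, and using the linearity of $\mathcal R$ together with the linearity of $\mathcal D$ in its first argument, we obtain
\begin{equation*}
\gamma\|\widebar\g\|_{\Ucal}^2
\leq
\mathcal D(\widebar\Gbf,\theta_2,\u_2,\varrho_2,\pfrak_2)
-\mathcal D(\widebar\Gbf,\theta_1,\u_1,\varrho_1,\pfrak_1),
\qquad \widebar\Gbf=\mathcal R\widebar\g .
\end{equation*}
The whole proof then reduces to one estimate:
\begin{equation*}
|\mathcal D(\widebar\Gbf,\theta_1,\u_1,\varrho_1,\pfrak_1)-\mathcal D(\widebar\Gbf,\theta_2,\u_2,\varrho_2,\pfrak_2)|\leq C_M\|\widebar\g\|_{\Ucal}^2 ,
\end{equation*}
with $C_M$ independent of $\gamma$. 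Granting this, we set $\gamma_0:=C_M$. For $\gamma>\gamma_0$ we get $(\gamma-C_M)\|\widebar\g\|^2_{\Ucal}\le0$, hence $\g_1=\g_2$.

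To prove the estimate, take $\zeta=0$ and split the difference of $\mathcal D$ term by term.

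The scalar term is
\begin{equation*}
(\widebar\Gbf\cdot\nabla\widebar\theta,\varrho_1)+(\widebar\Gbf\cdot\nabla\theta_2,\widebar\varrho).
\end{equation*}
For the first piece, integrate by parts using $\diver\widebar\Gbf=0$ and $\widebar\Gbf\cdot\n=0$; this moves the gradient onto $\varrho_1$ and gives the bound $\|\widebar\Gbf\|_{L^\infty}\|\widebar\theta\|_{L^2}\|\nabla\varrho_1\|_{L^2}$. Then apply Lemma \ref{thm:stability} to control $\widebar\theta$ and Lemma \ref{lem:adjoint} to control $\varrho_1$. The second piece is bounded by $\|\widebar\Gbf\|_{L^\infty}\|\nabla\theta_2\|_{L^2}\|\widebar\varrho\|_{L^2}$; here we use the $H^1$ state bound from Theorem \ref{Wellposednes:state} and Lemma \ref{lem:adjoint-stability} for $\widebar\varrho$.

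The velocity term is
\begin{equation*}
\int_0^{T_\xi}\Big[(\mathcal F_{\u_1}(\widebar\Gbf),\widebar\pfrak)+(\curl\Upsilon_\alpha(\widebar\Gbf)\times\widebar\u+\curl\Upsilon_\alpha(\widebar\u)\times\widebar\Gbf,\pfrak_2)\Big]\,dt .
\end{equation*}
In the first part, $\mathcal F_{\u_1}(\widebar\Gbf)$ is bounded in $L^2(0,T_\xi;L^2)$ by $C_M\|\widebar\g\|_{\Ucal}$. This follows from \eqref{eq:time-lifting-bound}: it gives $\partial_t\widebar\Gbf\in L^2(0,T_\xi;H^3)$ and $\widebar\Gbf\in L^2(0,T_\xi;H^4)$, and $\u_1$ is bounded in $H^3$. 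Pairing with $\widebar\pfrak\in L^\infty(0,T_\xi;\Vb^1)$ and using Lemma \ref{lem:adjoint-stability} gives a bound $C_M\|\widebar\g\|^2_{\Ucal}$.

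For the two cross terms we use \eqref{trilinear1}$_2$ to convert them into trilinear forms $b$.
\begin{itemize}
\item In $(\curl\Upsilon_\alpha(\widebar\Gbf)\times\widebar\u,\pfrak_2)$, the factor $\Upsilon_\alpha(\widebar\Gbf)\in H^2$ is used, and $\widebar\u$ is taken in $L^\infty(0,T_\xi;H^2)$ via \eqref{stability:final}.
\item In $(\curl\Upsilon_\alpha(\widebar\u)\times\widebar\Gbf,\pfrak_2)$, the derivatives must be moved onto $\widebar\Gbf$ and $\pfrak_2$. This is exactly where the extra regularity $\pfrak_2\in L^\infty(0,T_\xi;\Wb)$ from Lemma \ref{thm:adjoint:regularity} is needed, since $\widebar\u$ is controlled only in $H^2$. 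The identity \eqref{trilinear1}$_2$ produces terms $b(\pfrak_2,\widebar\Gbf,\Upsilon_\alpha(\widebar\u))$ and $b(\widebar\Gbf,\pfrak_2,\Upsilon_\alpha(\widebar\u))$. Both are bounded by
\begin{equation*}
C\|\pfrak_2\|_{H^2}\|\widebar\Gbf\|_{H^2}\|\widebar\u\|_{H^2},
\end{equation*}
which after time integration is at most $C_M\|\widebar\g\|_{\Ucal}^2$.
\end{itemize}

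Throughout, the lifting estimate from Lemma \ref{Lifting:estimates} controls $\|\widebar\Gbf\|_{L^2(0,T_\xi;H^4)}$. Summing the pieces yields the claimed estimate with $C_M$ depending only on the listed data.

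The main obstacle I expect is the term involving $\partial_t\Upsilon_\alpha(\widebar\Gbf)$ paired against $\widebar\pfrak$. It must be treated as a genuine $L^2$ pairing, which requires $\partial_t\widebar\Gbf\in L^2(0,T_\xi;H^2)$; this is available. One must also check that Lemma \ref{lem:adjoint-stability} really gives $\widebar\pfrak$ linearly in $\|\widebar\g\|_{\Ucal}$, uniformly in $\gamma$. That is true because neither the state system nor the adjoint system contains $\gamma$.
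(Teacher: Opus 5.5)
Your argument is correct, and it takes a more direct route than the paper.

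\textbf{The paper's route.} The paper does not estimate $\mathcal D(\widebar\Gbf,\theta_1,\u_1,\varrho_1,\pfrak_1)-\mathcal D(\widebar\Gbf,\theta_2,\u_2,\varrho_2,\pfrak_2)$ term by term. Instead it writes the nonlinear system for $(\widebar\theta,\widebar\v)=(\theta_1-\theta_2,\widebar\u-\widebar\Gbf)$. It pairs this system with both adjoint pairs $(\varrho_i,\pfrak_i)$, pairs the adjoint equations with $(\widebar\theta,\widebar\v)$, and integrates in time. This produces the identity \eqref{eqn:109}, which contains $\|\widebar\theta(T_\xi)\|^2_{(H^1(\O))'}$ coming from the terminal data.

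Adding \eqref{eqn:109} to the monotonicity inequality \eqref{eqn:1014} cancels the pieces of $\mathcal D_1-\mathcal D_2$ that pair the lifting with $\widebar\pfrak$. What remains consists of quadratic remainders such as $(\widebar\u\cdot\nabla\widebar\theta,\varrho_1+\varrho_2)$ and $b(\pfrak_1+\pfrak_2,\widebar\v,\Upsilon_\alpha(\widebar\v))$, together with lifting terms. These are bounded by $C\sup_t[\|\widebar\u\|^2_{H^2}+\|\widebar\theta\|^2_{L^2}+\|\widebar\Gbf\|^2_{H^2}+\|\widebar\varrho\|^2_{L^2}]$, using the $H^2$ bounds on $\varrho_i$ and $\pfrak_i$.

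\textbf{Comparison.} In your decomposition, every term of $\mathcal D_1-\mathcal D_2$ is a product of $\widebar\Gbf$ with one of $\widebar\theta$, $\widebar\varrho$, $\widebar\u$, $\widebar\pfrak$. This avoids the second duality computation altogether. With it you also avoid the time integration by parts between $\partial_t(\mathcal M_{\alpha\beta}\pfrak_i)\in L^2(0,T_\xi;\Wb')$ and the lifted difference system, and the extra quadratic remainders. The price is twofold:
\begin{itemize}
\item You need the $\widebar\pfrak$ part of Lemma~\ref{lem:adjoint-stability}, whereas the paper's final bound involves only $\widebar\varrho$. The lemma delivers both jointly, so nothing is actually lost.
\item You give up the nonnegative term $\|\widebar\theta(T_\xi)\|^2_{(H^1(\O))'}$ on the left-hand side, which is not needed for uniqueness.
\end{itemize}

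\textbf{Two small corrections.}

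First, a sign. With $\widebar\g=\g_1-\g_2$ (the convention implicit in your later splitting, where $\widebar\theta=\theta_1-\theta_2$ and $\widebar\pfrak=\pfrak_1-\pfrak_2$), adding the two variational inequalities gives
\[
\gamma\|\widebar\g\|^2_{\Ucal}\le\mathcal D(\widebar\Gbf,\theta_1,\u_1,\varrho_1,\pfrak_1)-\mathcal D(\widebar\Gbf,\theta_2,\u_2,\varrho_2,\pfrak_2).
\]
This is the opposite order from what you wrote. It is harmless, since you bound the absolute value.

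Second, the term $(\curl\Upsilon_\alpha(\widebar\u)\times\widebar\Gbf,\pfrak_2)$ does not need $\pfrak_2\in\Wb$. After $\eqref{trilinear1}_2$, it is bounded by $C\|\pfrak_2\|_{H^1}\|\widebar\Gbf\|_{H^2}\|\widebar\u\|_{H^2}$, using $\|\pfrak_2\|_{L^4}\|\nabla\widebar\Gbf\|_{L^4}$ and $\|\widebar\Gbf\|_{L^\infty}\|\nabla\pfrak_2\|_{L^2}$. In your argument, $\beta=0$ and Lemma~\ref{thm:adjoint:regularity} enter only through Lemma~\ref{lem:adjoint-stability}. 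Its proof tests the weak adjoint equation with $\widebar\pfrak$, which is admissible only because $\widebar\pfrak\in\Wb$.
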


\begin{proof}
 We see that the pair $(\widebar\theta,\widebar\v:=\widebar\u-\widebar\Gbf)$ satisfies
  \begin{equation}\label{difference:system}
  \small	\left\{
  	\begin{aligned}
  		\partial_t\widebar\theta
  		& +\u_1\cdot\nabla\widebar\theta
  		-\widebar\u\cdot\nabla\widebar\theta + \widebar\u\cdot\nabla\theta_1=0,
  		&&\text{in }(0,T_\xi)\times\O,\\
  		\partial_t\Upsilon_\alpha(\widebar\v ) & -\nu\Delta\widebar\v
  		+\curl\Upsilon_\alpha(\widebar\v)\times\u_1
  		- \curl\Upsilon_\alpha(\widebar\v )\times\widebar\v  + \curl\Upsilon_\alpha(\u_1) \times\widebar\v 
  		\\ + \nabla\widebar p & =\xi\widebar\theta e_2
  		 -\partial_t\Upsilon_\alpha(\widebar\Gbf)+\nu\Delta\widebar\Gbf
  		- \curl\Upsilon_\alpha(\widebar\Gbf) \times \u_1 + \curl\Upsilon_\alpha(\widebar\Gbf) \times\widebar\v
  		\\
  		& \quad +\curl\Upsilon_\alpha(\widebar\v )\times\widebar\Gbf  +\curl\Upsilon_\alpha(\widebar\Gbf) \times\widebar\Gbf - \curl\Upsilon_\alpha(\u_1)\times\widebar\Gbf ,
  		&&\text{in }(0,T_\xi)\times\O,\\
  		\diver\widebar\v&=0,
  		&&\text{in } (0,T_\xi)\times\O,\\
  	\widebar\theta(0) & =0, \qquad	\widebar\v(0) =\boldsymbol{0},
  		&&\text{in }\O,\\
  		\widebar\v\cdot\n& =0,
  		\qquad
  		[\n\cdot\Db(\widebar\v)]
  		\cdot\tau=0,
  		&&\text{on }(0,T_\xi)\times \Gamma,
  	\end{aligned}
  	\right.
  \end{equation}
and the pair $(\varrho_1,\pfrak_1)$ satisfies
\begin{equation}\label{eq:adjoint-scalar:1}
	\left\{
	\begin{aligned}
		- \partial_t\varrho_1-\u_1\cdot\nabla\varrho_1
		&=\xi(\pfrak_1\cdot\e_2)
		&&\text{in }(0,T_\xi)\times\O,\\
		\varrho_1(T_\xi) =\Lambda^{-2}\theta_1(T_\xi), \qquad \pfrak_1(T_\xi)& =\boldsymbol{0},
		&&\text{in }\O,
	\end{aligned}
	\right.
\end{equation}
and the variational form 
\begin{equation}\label{weak:adjoint:p:1}
	\left\langle-\partial_t(\mathcal M_{\alpha\beta}\pfrak_1),
	\boldsymbol\phi\right\rangle_{\Wb'\times\Wb}
	+\mathcal A_{\u_1}(\pfrak_1,\boldsymbol\phi)
	=(\theta_1\nabla\varrho_1,\boldsymbol\phi),
\end{equation}
for every $\boldsymbol\phi\in\Wb$ and almost every $t\in(0,T_\xi)$.

Taking the inner products of equations $\eqref{difference:system}_1$, $\eqref{difference:system}_2$, and $\eqref{eq:adjoint-scalar:1}_1$ with $\varrho_1$, $\pfrak_1$, and $\widebar\theta$, respectively, and choosing $\boldsymbol{\phi}=\widebar\v$ in \eqref{weak:adjoint:p:1}, we obtain
\begin{align}
	(\partial_t\widebar\theta, \varrho_1) & =-
	(\u_1\cdot\nabla\widebar\theta
	-\widebar\u\cdot\nabla\widebar\theta + \widebar\u\cdot\nabla\theta_1, \varrho_1),\label{Uni1}\\
	 \langle\partial_t\Upsilon_\alpha(\widebar\v), \pfrak_1\rangle & = - ( - \nu\Delta\widebar\v
	+\curl\Upsilon_\alpha(\widebar\v)\times\u_1
	- \curl\Upsilon_\alpha(\widebar\v )\times\widebar\v  + \curl\Upsilon_\alpha(\u_1)\times\widebar\v, \pfrak_1) 
	\nonumber \\
	 & \quad  + (\xi\widebar\theta e_2, \pfrak_1)
	- (\partial_t\Upsilon_\alpha(\widebar\Gbf)-\nu\Delta\widebar\Gbf
	+\curl\Upsilon_\alpha(\widebar\Gbf)\times\u_1 - \curl\Upsilon_\alpha(\widebar\Gbf)\times\widebar\v, \pfrak_1)
	\nonumber
	\\
	& \quad 
	+ (\curl\Upsilon_\alpha(\widebar\v)\times\widebar\Gbf  +\curl\Upsilon_\alpha(\widebar\Gbf)\times\widebar\Gbf - \curl\Upsilon_\alpha(\u_1)\times\widebar\Gbf, \pfrak_1), \label{Uni2}\\
	(-\partial_t\varrho_1, \widebar{\theta}) & =  ( \u_1\cdot\nabla\varrho_1, \widebar{\theta}) +  \xi(\pfrak_1 \cdot e_2, \widebar{\theta}), \label{Uni3} \\
	-\left\langle\partial_t(\mathcal M_{\alpha\beta}\pfrak_1),
	\widebar\v\right\rangle_{\Wb'\times\Wb} & =
	- 2\nu(\Db(\pfrak_1),\Db(\widebar\v))   -   b(\pfrak_1,\widebar\v,\Upsilon_\alpha(\u_1))
	+b(\widebar\v,\pfrak_1,\Upsilon_\alpha(\u_1))
	\nonumber\\
	& \quad  -b(\pfrak_1,\u_1,\Upsilon_\alpha(\widebar\v))
	+b(\u_1,\pfrak_1,\Upsilon_\alpha(\widebar\v)) + (\theta_1\nabla\varrho_1,\widebar\v). \label{Uni4}
\end{align}

Now, performing $[\eqref{Uni1}+\eqref{Uni2}]-[\eqref{Uni3}+\eqref{Uni4}]$ yields
\begin{align}\label{eqn:1081}
	& (\partial_t\widebar\theta,\varrho_1)+(\partial_t\varrho_1,\widebar\theta)+ \langle\partial_t\Upsilon_\alpha(\widebar\v), \pfrak_1\rangle +\left\langle\partial_t(\mathcal M_{\alpha\beta}\pfrak_1),
	\widebar\v\right\rangle_{\Wb'\times\Wb} - (\widebar{\u}\cdot\nabla\widebar{\theta}, \varrho_1) - (\curl \Upsilon_\alpha(\widebar{\v})\times\widebar{\v}, \pfrak_1) 
	\nonumber\\
	&
	+ (\curl \Upsilon_\alpha(\widebar{\Gbf})\times\widebar{\v}, \pfrak_1) 
	 - (\curl \Upsilon_\alpha(\widebar{\v})\times\widebar{\Gbf}, \pfrak_1) - (\curl \Upsilon_\alpha(\widebar{\Gbf})\times\widebar{\Gbf}, \pfrak_1) + (\curl \Upsilon_\alpha(\u_1)\times\widebar{\Gbf}, \pfrak_1) 
	\nonumber\\
	& + (\curl \Upsilon_\alpha(\widebar{\Gbf})\times\u_1, \pfrak_1) + (\partial_t\Upsilon_\alpha(\widebar{\Gbf}),\pfrak_1) - \nu (\Delta\widebar{\Gbf},\pfrak_1)  
	\nonumber\\ 
	& = 0.   
\end{align}
Changing the role of $(\theta_1,\u_1,\g_1,\varrho_1,\pfrak_1)$ and $(\theta_2,\u_2,\g_2,\varrho_2,\pfrak_2)$, we get
\begin{align}\label{eqn:1082}
	& -(\partial_t\widebar\theta,\varrho_2)-(\partial_t\varrho_2,\widebar\theta) -  \langle\partial_t\Upsilon_\alpha(\widebar\v), \pfrak_2\rangle - \left\langle\partial_t(\mathcal M_{\alpha\beta}\pfrak_2),
	\widebar\v\right\rangle_{\Wb'\times\Wb} - (\widebar{\u}\cdot\nabla\widebar{\theta}, \varrho_2) - (\curl \Upsilon_\alpha(\widebar{\v})\times\widebar{\v}, \pfrak_2) 
	\nonumber\\
	& 
	+ (\curl \Upsilon_\alpha(\widebar{\Gbf})\times\widebar{\v}, \pfrak_2) 
	 - (\curl \Upsilon_\alpha(\widebar{\v})\times\widebar{\Gbf}, \pfrak_2) - (\curl \Upsilon_\alpha(\widebar{\Gbf})\times\widebar{\Gbf}, \pfrak_2) - (\curl \Upsilon_\alpha(\u_2)\times\widebar{\Gbf}, \pfrak_2) 
	\nonumber\\
	& - (\curl \Upsilon_\alpha(\widebar{\Gbf})\times\u_2, \pfrak_2) - (\partial_t\Upsilon_\alpha(\widebar{\Gbf}),\pfrak_2) + \nu (\Delta\widebar{\Gbf},\pfrak_2)  
	\nonumber\\ 
	& = 0.   
\end{align}

Adding \eqref{eqn:1081} and \eqref{eqn:1082}, we have 
\begin{align}\label{eqn:109}
	& \|\widebar{\theta}(T_\xi)\|^2_{(H^1(\O))^\prime} - \int_{0}^{T_\xi} (\widebar{\u}\cdot\nabla\widebar{\theta}, \varrho_1 + \varrho_2) dt - \int_{0}^{T_\xi} (\curl \Upsilon_\alpha(\widebar{\v})\times\widebar{\v}, \pfrak_1 + \pfrak_2) dt
	\nonumber\\
	&  + \int_{0}^{T_\xi} (\curl \Upsilon_\alpha(\widebar{\Gbf})\times\widebar{\v} - \curl \Upsilon_\alpha(\widebar{\v})\times\widebar{\Gbf} - \curl \Upsilon_\alpha(\widebar{\Gbf})\times\widebar{\Gbf}, \pfrak_1 + \pfrak_2) dt
	\nonumber\\
	& 
	+  \int_{0}^{T_\xi} (\curl \Upsilon_\alpha(\u_1)\times\widebar{\Gbf}, \pfrak_1) dt
	 -  \int_{0}^{T_\xi} [(\curl \Upsilon_\alpha(\u_2)\times\widebar{\Gbf}, \pfrak_2) + (\curl \Upsilon_\alpha(\widebar{\Gbf})\times\u_1, \pfrak_1)] dt
	\nonumber\\
	& - \int_{0}^{T_\xi} (\curl \Upsilon_\alpha(\widebar{\Gbf})\times\u_2, \pfrak_2) dt + \int_{0}^{T_\xi} (\partial_t\Upsilon_\alpha(\widebar{\Gbf}), \widebar\pfrak) dt  - \int_{0}^{T_\xi} \nu (\Delta\widebar{\Gbf},\widebar\pfrak)  dt 
	\nonumber\\
	& = 0.
\end{align}

From \eqref{eqn:93}, one can obtain 
\begin{align}\label{eqn:1014}
	\gamma \|\widebar{\g}\|^2_{\Ucal} \leq \mathcal{D}(\widebar\Gbf, \theta_1, \u_1, \varrho_1, \pfrak_1) - \mathcal{D}(\widebar\Gbf, \theta_2, \u_2, \varrho_2, \pfrak_2).
\end{align}
Adding \eqref{eqn:109} and \eqref{eqn:1014} and using integration by parts, we get
\begin{align}
& \|\widebar{\theta}(T_\xi)\|^2_{(H^1(\O))^\prime} + 	\gamma \|\widebar{\g}\|^2_{\Ucal}  
\nonumber\\
& \leq 
-  \int_{0}^{T_\xi} (\widebar{\u}\cdot\nabla(\varrho_1 + \varrho_2) , \widebar{\theta}) dt + \int_{0}^{T_\xi} b(\pfrak_1 + \pfrak_2 ,\widebar{\v} , \Upsilon_\alpha(\widebar{\v})) dt  
 - \int_{0}^{T_\xi} b(\widebar{\v}, \pfrak_1 + \pfrak_2 , \Upsilon_\alpha(\widebar{\v})) dt 
 \nonumber\\ 
 & \quad 
 -  \int_{0}^{T_\xi} b(\pfrak_1 + \pfrak_2 ,\widebar{\v} , \Upsilon_\alpha(\widebar{\Gbf})) dt  
 + \int_{0}^{T_\xi} b(\widebar{\v}, \pfrak_1 + \pfrak_2 , \Upsilon_\alpha(\widebar{\Gbf})) dt 
 + \int_{0}^{T_\xi} b(\pfrak_1 + \pfrak_2 ,\widebar{\Gbf} , \Upsilon_\alpha(\widebar{\v})) dt  
\nonumber\\
& \quad - \int_{0}^{T_\xi} b(\widebar{\Gbf}, \pfrak_1 + \pfrak_2 , \Upsilon_\alpha(\widebar{\v})) dt 
+  \int_{0}^{T_\xi} b(\pfrak_1 + \pfrak_2 ,\widebar{\v} , \Upsilon_\alpha(\widebar{\Gbf})) dt  
 - \int_{0}^{T_\xi} b(\widebar{\Gbf}, \pfrak_1 + \pfrak_2 , \Upsilon_\alpha(\widebar{\Gbf})) dt 
\nonumber\\
& \quad 
+ \int_{0}^{T_\xi} [-(\widebar{\Gbf}\cdot\nabla \varrho_1, \widebar{\theta})  + (\widebar{\Gbf}\cdot\nabla\theta_2, \widebar\varrho)] dt
\nonumber\\
& \leq C\sup_{t\in[0,T_\xi]} [\|\widebar\u(t)\|_{H^2(\O)}^2 + \|\widebar\theta(t)\|_{L^2(\O)}^2  +  \|\widebar\Gbf(t)\|^2_{H^2(\O)} + \|\widebar\varrho(t)\|^2_{L^2(\O)} ],
\end{align}
where we have used $\varrho_1,\varrho_2, \pfrak_1$ and $\pfrak_2$ belong to $L^{\infty}(0,T_\xi;H^2(\O))$. In view of Lemma~\ref{Lifting:estimates}, Lemma \ref{thm:stability} and \eqref{varrho:pfrack:dif}, there exists a constant $\gamma_0>0$ (independent of $\gamma$, $\g_1$ and $\g_2$) such that 
\begin{align}
	\gamma \|\widebar{\g}\|^2_{\Ucal} \leq \gamma_0 \|\widebar{\g}\|^2_{\Ucal}. 
\end{align}
Therefore, if $\gamma>\gamma_0$, we conclude that $\g_1=\g_2$. The optimal control is therefore unique. This completes the proof.
\end{proof}

\section{Conclusion and future perspectives}

We developed a boundary optimal-control framework for a nondiffusive scalar
transported by a two-dimensional second-grade fluid.  The analysis gives a
common, control-independent state horizon, existence of optimal controls,
directional differentiability in the topology of the terminal mix-norm, a
weak adjoint formulation, and a first-order variational
inequality.  In the frictionless case without the enstrophy reward, additional
adjoint regularity also yields uniqueness for a sufficiently large quadratic
control penalty.

The broader conclusion is that the negative-Sobolev approach to optimal
mixing survives the passage from Newtonian to second-grade dynamics, despite
the filtered acceleration, generalized vorticity, and more delicate boundary
and adjoint structures.  In particular, tangential wall actuation can still
be incorporated into a rigorous differentiable optimization framework even
though the scalar equation has no diffusion.  This opens a route to studying
how the second-grade parameter changes admissible stirring mechanisms and
optimal wall protocols.

Natural analytical questions include continuation of the active problem
beyond the present local horizon, uniqueness for $\beta>0$ or $\zeta>0$, and
the limit $\alpha\to0$ for the state, adjoint, and optimal control.  A complementary direction is the numerical approximation of the optimality
system. Since the scalar equation has no diffusion, it is desirable that the
discretization preserve scalar mass and $L^2$-energy and maintain discrete
state-adjoint duality, as in the structure-preserving framework of
\cite{HuLiZhangZuazua2026}. Extending this approach, together with the
boundary-control algorithms developed in \cite{ZhengHuWu2023}, to the
second-grade momentum equation and tangential Navier-slip control would permit
a systematic numerical investigation of the effects of $\alpha$, $\beta$, and
$\zeta$ on optimal mixing.

 \medskip 
 \noindent
 \textbf{Acknowledgments.}  The author gratefully acknowledges Professor Enrique Zuazua for his support and careful comments on an earlier version of this manuscript. This work is funded by the AFOSR 24IOE027 project.


 \medskip\noindent
 \textbf{Data availability.}
 No datasets were generated or analysed during the current study.
 
 \medskip\noindent
 \textbf{Author contributions.}
 The author carried out the mathematical analysis and wrote the manuscript.
 
 \medskip\noindent
 \textbf{Conflict of interest.}
 The author declares no conflict of interest.

\end{document}